\newtheorem{theorem}{Theorem}
\theoremstyle{plain}
\newtheorem{corollary}{Corollary}
\newtheorem{definition}{Definition}
\newtheorem{lemma}{Lemma}
\newtheorem{notation}{Notation}
\newtheorem{proposition}{Proposition}
\newtheorem{remark}{Remark}
\DeclareMathOperator{\argmin}{argmin}
\numberwithin{equation}{section}
\def \N {\mathcal{N}}
\def \R {\mathcal{R}}
\def \L {\mathcal{L}}
\def \F {\mathbb{F}}
\renewcommand{\phi}{\varphi}
\renewcommand{\epsilon}{\varepsilon}
\begin{document}
\title[Finite state mean field games]{Probabilistic approach to finite state mean field games}
\author{Alekos Cecchin}
\address[A. Cecchin and M. Fischer]
{\newline \indent Department of Mathematics ``Tullio Levi Civita''
\newline 
\indent University of Padua \newline
\indent Via Trieste 63, 35121 Padova, Italy
\newline }
\email[A. Cecchin]{alekos.cecchin@math.unipd.it}
\author{Markus Fischer}
\email[M. Fischer]{fischer@math.unipd.it}%
\urladdr{http://www.math.unipd.it/~fischer}

\thanks{The first author is supported by the PhD program in Mathematical Sciences, Department of Mathematics, 
University of Padua (Italy) and Progetto Dottorati - Fondazione Cassa di Risparmio di Padova e Rovigo (CaRiPaRo). The second author acknowledges partial support through the research projects ``Mean Field Games and Nonlinear PDEs'' (CPDA157835) of the University of Padua and ``Nonlinear Partial Differential
Equations: Asymptotic Problems and Mean-Field Games'' of the Fondazione CaRiPaRo. Both authors thank an anonymous Referee for her/his helpful critique and detailed comments and suggestions.}

\date{April 2, 2017; revised December 1, 2017}

\subjclass{60J27, 60K35, 91A10, 93E20}
\keywords{Mean field games, finite state space, relaxed controls, relaxed Poisson measures, $N$-person games, approximate Nash equilibria, chattering lemma}

\begin{abstract}

We study mean field games and corresponding $N$-player games in continuous time over a finite time horizon where the position of each agent belongs to a finite state space. As opposed to previous works on finite state mean field games, we use a probabilistic representation of the system dynamics in terms of stochastic differential equations driven by Poisson random measures. Under mild assumptions, we prove existence of solutions to the mean field game in relaxed open-loop as well as relaxed feedback controls. Relying on the probabilistic representation and a coupling argument, we show that mean field game solutions provide symmetric $\epsilon_N$-Nash equilibria for the $N$-player game, both in open-loop and in feedback strategies (not relaxed), with $\epsilon_N\leq \frac{\text{constant}}{\sqrt{N}}$. Under stronger assumptions, we also find solutions of the mean field game in ordinary feedback controls and prove uniqueness either in case of a small time horizon or under monotonicity.

\end{abstract}
\maketitle

\section{Introduction}

Mean field games, as independently introduced by \citet{lasrylions07} and by \citet{huangetal06}, represent limit models for symmetric non-zero-sum non-cooperative $N$-player dynamic games with mean field interactions when the number $N$ of players tends to infinity. For an introduction to mean field games see \citet{cardaliaguet13}, \citet{carmonaetal13} and \citet{bensoussanetal13}; the latter two works also deal with optimal control problems of McKean-Vlasov type. There is by now a wealth of works dealing with different classes of mean field games; for a partial overview see \citet{gomesetal15} and the references therein. Here, we restrict attention to a class of finite time horizon problems with continuous time dynamics and fully symmetric cost structure, where the position of each agent belongs to a finite state space.

The relation between the limit model (the mean field game) and the corresponding prelimit models (the $N$-player games) can be understood in two different directions: approximation and convergence. By approximation we mean that a solution of the mean field game allows to construct approximate Nash equilibria for the $N$-player games, where the approximation error is arbitrarily small for $N$ big enough. By convergence we mean that Nash equilibria for the $N$-player games may be expected to converge to a solution of the mean field game as $N$ tends to infinity.

Results in the approximation direction are more common and usually provide the justification for the definition of the mean field game. When the underlying dynamics is of It\^o type without jumps, such results were established by \citet{huangetal06} and, more recently, by for instance \citet{carmonadelarue13}, \citet{carmonalacker15} and \citet{bensoussanetal16}. When the dynamics is driven by generators of L{\'e}vy type, but with the control appearing only in the drift, an approximation result is found in \cite{kolokoltsovetal11}.

Rigorous results on convergence to the mean field game limit in the non stationary case (finite time horizon) are even more recent. While the limits of $N$-player Nash equilibria in stochastic open-loop strategies can be completely characterized (see \citet{lacker16} and \citet{fischer17} for general systems of It{\^o} type), the convergence problem is more difficult for Nash equilibria in Markov feedback strategies with global state information. A breakthrough was achieved by \citet{cardaliaguetetal15}. Their proof of convergence relies on having a regular solution to the so-called master equation. This is a kind of transport equation on the space of probability measures associated with the mean field game; its solution yields a solution to the mean field game for any initial time and initial distribution. If the mean field game is such that its master equation possesses a unique regular solution, then that solution can be used to prove convergence of the costs associated with the $N$-player Nash equilibria, as well as a weak form of convergence of the corresponding feedback strategies. An important ingredient in the proof is a coupling argument similar to the one employed in deriving the propagation of chaos property for uncontrolled mean field systems \citep[cf.][]{sznitman89}. This kind of coupling argument, in which independent copies of the limit process are compared to their prelimit counterparts, is useful also for obtaining approximation results; cf.\ for instance the above cited works by \citet{huangetal06} and \citet{carmonadelarue13}.

In this paper, we focus on games where the position of each agent belongs to a given finite state space $\Sigma := \left\{1,\ldots, d\right\}$. Such games have been studied by \citet{gomesetal13}, and also by \citet{basnaetal14}. Their approach to the problem is based on \mbox{PDE}\,/\,\mbox{ODE} methods and the infinitesimal generator ($Q$ matrix) of the system dynamics; we will return to this shortly.

Here, we adopt a different approach based on a probabilistic representation. We write the dynamics of the $N$-player game as a system of stochastic differential equations driven by independent stationary Poisson random measures with the same intensity measure $\nu$, weakly coupled through the empirical measure of the system states:
\begin{equation}
	X^N_i(t)= \xi^N_i + \int_0^t\int_U f(s, X^N_i(s^-), u, \alpha^N_i(s), \mu^N(s^-)) \N_i^N(ds,du), \quad i=1,\ldots,N,
\label{mfgNi}
\end{equation}
where $\alpha^N_i$ is the control of player $i$ (here in open loop form) with values in a compact set $A$ and $\mu^N(s^-)$ is the empirical measure of the system immediately before time $s$. The dynamics for the one representative player of the mean field limit is analogously written as
\begin{equation}
X(t)= \xi + \int_0^t\int_U f(s, X(s^-), u, \alpha(s), m(s)) \N(ds,du),
\label{mfgi}
\end{equation}
where $\alpha$ is the control and $m:[0,T]\rightarrow\mathcal{P}(\Sigma)$ a deterministic flow of probability measures, which takes the place of $\mu^N$. 

Representations \eqref{mfgNi} and \eqref{mfgi} of the system dynamics allow to obtain approximation results with error bounds of the form $\frac{\text{constant}}{\sqrt{N}}$ for the approximate $N$-player Nash equilibria via the aforementioned coupling argument. This is what we will do here. The probabilistic representation is useful also for the problem of convergence to the mean field limit; see below.

The function $f$ appearing in (\ref{mfgNi}) and (\ref{mfgi}) can be chosen so that the corresponding state processes $X^{N}_{i}$, $X$ have prescribed transition rates when the control and measure variable are held constant. Following an idea of \citet{graham92}, we choose $U\subset \mathbb{R}^{d}$, let the intensity measure $\nu$ be given by $d$ copies of Lebesgue measure on the line (cf.\ (\ref{nu}) below), and set 
\begin{equation} \label{fconti}
	f(t,x,u,a,p) := \sum_{y\in \Sigma} (y-x) \mathbbm{1}_{]0, \lambda(t,x,y,a,p)[} (u_y).
\end{equation}
With this $f$ we have, as $h\rightarrow0$,
\[
	P\left[X(t+h)= y | X(t) = x\right] = \lambda(t,x,y,\alpha,m) \cdot h + o(h)
\]
if $y\neq x$, for any constant control $\alpha$ and probability measure $m$. Thus, $\lambda(t,x,y,\alpha,m)$ is the transition rate from state $x$ to state $y$.

We will consider several types of controls: open-loop, feedback, relaxed open-loop and relaxed feedback. Each player wants to optimize his cost functional over a finite time horizon $T$. The coefficients representing running and terminal costs may depend on the measure variable and are the same for all players.

We first study the mean field game and show that it admits a solution in relaxed controls. The solution of the mean field game can be seen as a fixed point. For a given flow of measures $m(\cdot)$, find a strategy $\alpha_m$ that is optimal and let $X^{\alpha_m,m}$ be the corresponding solution of Eq.~\eqref{mfgi}. Now find $m$ such that $Law(X(t)) = m(t)$ for all $t\in [0,T]$. Under mild hypotheses, we prove existence of solutions in relaxed open-loop controls using the Ky Fan fixed point theorem for point-to-set maps. This is analogous to the existence result obtained by \cite{lacker15} for general dynamics driven by Wiener processes. As there, we will characterize solutions to Eq.~(\ref{mfgi}) through the associated controlled martingale problem. In order to write the dynamics when using a relaxed control, we need to work with relaxed Poisson measures in the sense of \citet{kushnerdupuis01}; also see Appendix~A below. The same assumptions that give existence in relaxed open-loop controls also yield existence of solutions in relaxed feedback controls. Relaxed controls are used only for the limit model.

Then we show that those relaxed mean field game solutions provide $\epsilon_N$-Nash equilibria for the $N$-player game both in ordinary open-loop and ordinary feedback strategies. To this end, we approximate a limiting optimal relaxed control by an ordinary one, using a version of the chattering lemma that also works for feedback controls, at least in our finite setting. The approximating control is then shown to 
provide a symmetric $\epsilon_N$-Nash equilibrium, with $\epsilon_N\leq \frac{\text{constant}}{\sqrt{N}}$, decentralized when considering feedback strategies.
As explained above, our proof relies on the probabilistic representation of the system and a coupling argument.

We also study the problem of finding solutions of the mean field game in ordinary feedback controls. There, we need stronger assumptions in order to guarantee the uniqueness of an optimal feedback control for any fixed $m$ (existence always holds). Moreover, we prove that the feedback mean field game solution is unique either if the time horizon $T$ is small enough or if the cost coefficients satisfy the monotonicity conditions of Lasry and Lions (cf.\ below).

Roughly speaking, we need to assume only the continuity of the rates $\lambda$ in order to have relaxed or relaxed feedback mean field game solutions and to obtain $\epsilon_N$-Nash equilibria for the $N$-player game, both open-loop and feedback. Under stronger assumptions, namely affine dependence of $\lambda$ on the control and strict convexity of the cost, we have uniqueness of the optimal feedback control for any $m$ through the uniqueness of the minimizer of the associated Hamiltonian. Under assumptions similar to these latter, \citet{basnaetal14} study the problem in the framework of non-linear Markov processes and find $\frac{1}{N}$-Nash equilibria for the $N$-player game. In  \citet{gomesetal13}, the transition rates coincide with the control, in analogy with the original works of Lasry and Lions, and $\frac{1}{\sqrt{N}}$-Nash equilibria are obtained. Both these works consider ordinary feedback controls only, hence feedback solutions of the mean field game.

The work by \citet{gomesetal13} also contains a result in the convergence direction. More precisely, convergence of $N$-player Nash equilibria in feedback controls to the mean field limit is established, but only if the time horizon $T$ is sufficiently small. Moreover, the authors prove a result about the uniqueness of feedback mean field game solutions for arbitrary time horizon in case the Lasry-Lions monotonicity conditions hold.

Lastly, let us mention several recent preprints. In \cite{donceletal16}, continuous time mean field games with finite state space and finite action space are studied. The authors prove existence of solutions to the mean field game, corresponding to what we call solutions in relaxed feedback controls. Their prelimit models (the $N$-player games) are different and difficult to compare to ours since they are set in discrete time. The second work we mention is \citet{benazzolietal17}. There, the authors study a class of mean field games with jump diffusion dynamics. An existence result for the mean field game in the spirit of \cite{lacker15} is given. The authors also obtain a convergence result in a special situation where Nash equilibria for the $N$-player games can be found explicitly. In their model, the jump heights are directly (and linearly) controlled, not the jump intensities.

The last two preprints, which appeared nearly simultaneously, after submission of the present paper, concern the convergence problem for finite state mean field games. In \cite{alekosguglielmo}, a joint work of the first author, the convergence of feedback Nash equilibria to solutions of the mean field game is studied following the ideas of \citet{cardaliaguetetal15} sketched above. The Master Equation, which in this case is a first order \mbox{PDE} stated in $\mathcal{P}(\Sigma)$, is employed to obtain convergence of the feedback Nash equilibria, the value functions and a propagation of chaos property for the $N$-player optimal trajectories. Provided that the Master Equation possesses a (unique) classical solution, convergence is established through a coupling argument, which relies on the probabilistic representation of the dynamics introduced here. Existence of a unique classical solution to the Master Equation is verified under the Lasry-Lions monotonicity conditions. In addition, a central limit theorem and a large deviation principle for the $N$-player empirical measure processes are proved. In the independent work by \citet{bayraktarcohen17}, the authors again use the Master Equation in the spirit of \citet{cardaliaguetetal15} to find the same convergence result as in \cite{alekosguglielmo}, but using a slightly different probabilistic representation of the dynamics. They also obtain a central limit theorem for the fluctuations of the $N$-player empirical measure processes.

\subsection*{Structure of the paper}
In Section~2, we introduce the notation and various assumptions to be used in the sequel. Then we describe the $N$-player games as well as the corresponding mean field game, giving the relevant definitions of Nash equilibrium and solution of the mean field game. Relaxed controls (open-loop and feedback) are introduced there as well, while a proper definition of relaxed Poisson measures is given in Appendix~A. All main  assumptions are verified to hold for the natural shape of $f$ in (\ref{fconti}). 

In Section~3, we establish existence of solutions to the mean field game in relaxed open-loop as well as relaxed feedback controls.

In Section~4, we find, under additional assumptions, mean field game solutions in non-relaxed feedback controls by proving the uniqueness of the optimal control for any flow of measures. Moreover, uniqueness of solutions is proved either for small $T$ or under the Lasry-Lions monotonicity conditions.

In Section~5, we first establish a version of the chattering lemma that works also for feedback controls. Then we turn to the construction of approximate Nash equilibria coming from a solution of the mean field game, and derive the error bound mentioned above for feedback as well as open-loop strategies.

Section~6 contains a summary of the main results.

\section{Description of the model}
\label{SectDescription}

\subsection{Notations and assumptions}

Throughout the paper, we fix $\Sigma = \{ 1,\ldots, d\}$ to be the finite state space of any player. Let $T$ be the finite time horizon and $(A, dist)$ be a compact metric space, the space of control values. Let  $U$ be a compact set in $\mathbb{R}^d$ and let $\nu$ be a Radon measure on $U$. Let
\[
	S := \mathcal{P} (\Sigma) = \{ p\in \mathbb{R}^d : p_j \geq 0, \quad j=1,\ldots,d ;\quad p_1 + \ldots + p_d =1 \}
\]
be the space of probability measures on $\Sigma$, which is the probability simplex in $\mathbb{R}^d$.
Let $f: [0,T]\times\Sigma\times U \times A \times S \longrightarrow \{-d,\ldots, d \}$ be a measurable function (the one appearing in the dynamics (\ref{mfgi}) and (\ref{mfgNi})) such that 
	$f(t,x,u,a,p) \in \{1-x,\ldots, d-x \}$.
Let $c:[0,T]\times\Sigma\times A \times S \longrightarrow \mathbb{R}$, $\psi:\Sigma\times S \longrightarrow \mathbb{R}$ be measurable functions, representing the running and the terminal costs, respectively, which will be the same for all players.

We will denote by $\N$ any stationary Poisson random measure on $[0,T]\times U$ with intensity measure $\nu$ on $U$, and by $\N^N=(\N_1^N,\ldots,\N_N^N)$ a vector of $N$ i.i.d.\ stationary Poisson random measures, each with the same law as $\N$. The initial point of the $N$-player game will be represented by $N$ i.i.d.\ random variables $\xi_1,\ldots,\xi_{N}$ with values in $\Sigma$ and common distribution $m_0\in S$, which will be fixed throughout. Similarly, the initial point of the limiting system will be represented by a random variable $\xi$ with law $m_0$.

The state of player $i$ at time $t$ is denoted by $X_i^N(t)$. 
The trajectories of any process $X_i^N$ are assumed to be in $D([0,T],\Sigma)$, which denotes the space of c{\`a}dl{\`a}g functions from $[0,T]$ to $\Sigma$, endowed with the Skorokhod $J_1$-topology.
Let $\mu^N(t):= \frac{1}{N} \sum_{i=1}^N \delta_{X^N_i (t)}$ be the empirical measure of the system of $N$ players. 
In the limiting dynamics, the empirical measure is replaced by a deterministic \emph{flow} of probability measures $m: [0,T]\longrightarrow S$.

The space of measures $S$ can be equipped with any norm in $\mathbb{R}^d$, as they are all equivalent, 
so we choose the Euclidean norm $|p|$. We observe that $S$ is a compact and convex subset of $\mathbb{R}^d$. 
Denote by $\mathcal{C}([0,T],S)$ the space of continuous functions from $[0,T]$ to $S$, endowed with the uniform norm.
The space of flows of probability measures on $S$ will be denoted by $\mathcal{L} \subset \mathcal{C}([0,T],S)$, which will be shown in Subsection~3.1 to be 
\[
	\mathcal{L} := \left\{m:[0,T]\longrightarrow S: |m(t)-m(s)| \leq K|t-s| , \quad m(0) = m_0 \right\}
\]
where the constant is given by $K:=2\nu(U)\sqrt{d}$.

We will study several types of controls. Pathwise existence and uniqueness of solutions to the controlled dynamics (\ref{mfgi}), with trajectories that remain in $\Sigma$, is guaranteed by the following Lipschitz condition:
\begin{equation}
	\int_U |f(s,x, u, a, p) - f(s,x, u, a, p)|\nu(du) \leq K_1 |x-y| 
\label{lip}
\end{equation}
for every $ x,y\in \Sigma, s\in [0,T], a\in A$ and $p\in S$,
where $K_1$ is a constant.
The above condition is always satisfied in our model since $|x-y|\geq 1 $ for each $x\neq y \in \Sigma$ and 
$\int_U |f(s,x, u, a, p)| \leq  \nu(U) d$; thus we may take $K_1= 2 \nu(U) d$.

Let us summarize here the various sets of assumptions we will make use of:

\begin{itemize}
	\item[\textbf{(A)}] The function $\tilde{f}:[0,T] \times \Sigma  \times A\times S \longrightarrow L^1(\nu)$ defined by
	$\tilde{f}(t,x, a,p):=  f(t, x,\cdot ,a,p)  \in L^1(\nu)$ is continuous in $t,a,p$ (uniformly, and is bounded), that is, there exists a function $w_f$ such that $\lim_{h\rightarrow0} w_f(h)=0$ and
	\begin{equation}
	\int_U |f(t, x, u, a, p)- f(s, x,u,b, q)|\nu(du) \leq w_f(|t-s| + dist(a,b) + |p-q|)
	\label{continua}
	\end{equation}
	for every $t,s\in [0,T]$, $x\in\Sigma$, $a,b \in A$, $p, q \in S$;
	
	\item[\textbf{(A')}] Assumption (A) holds and $\tilde{f}$ is Lipschitz in $p\in S$:
	\begin{equation}
		\int_U |f(t, x, u, a, p)- f(t, y,u,a, q)|\nu(du) \leq K_1 (|x-y| + |p-q|);
	\label{lipA}
	\end{equation}
	
	\item[\textbf{(A'')}] Assumption (A') holds and$\tilde{f}$ is Lipschitz also in $a\in A$:
	\begin{equation}
	\int_U |f(t, x, u, a, p)- f(t, y,u,b, q)|\nu(du) \leq K_1 (|x-y| + |p-q| + dist(a,b));
	\label{lipC}
	\end{equation}
	
	\item[\textbf{(B)}] The running cost $c$ is continuous (and bounded) in $t, x, a, p$ and the terminal cost is continuous (and bounded) in $x,p$;
		
	\item[\textbf{(B')}] Assumption (B) holds and the costs  $c$ and $\psi$ are Lipschitz in $p$:
	\begin{equation}
	|c(t,x,a,p) -c(t,y,a,q)| + |\psi(x,p) -\psi(y,q)|  \leq K_2 (|x-y| + |p-q|); 
	\label{lipB}
	\end{equation}
	
	\item[\textbf{(B'')}] Assumption (B') holds and the running cost $c$ is Lipschitz also in $a$:
	\begin{equation}
	|c(t,x,a,p) -c(t,y,b,q)| + |\psi(x,p) -\psi(y,q)|\leq K_2 [|x-y| + dist(a,b) + |p-q|] . 
	\label{lipD}
	\end{equation}
\end{itemize}

The above assumptions will be used in Sections 3 and 5 to find solutions of the mean field game and then approximate Nash equilibria for the $N$-player game, both in open-loop and in feedback form.

Our last assumption will be more implicit.
We identify the set of functions $g:\Sigma\longrightarrow\mathbb{R}$ with $\mathbb{R}^d$ and observe that any $g$ is bounded and Lipschitz. 
For any $x\in\Sigma$, $0\leq t\leq T$, $a\in A$, $p\in S$ and $g\in\mathbb{R}^d$ define the \emph{generator} 
\begin{equation} \label{gen}
	\Lambda^{a,p}_t g(x):= \int_U [g(x + f(t,x, u, a, p)) -g(x)] \nu(du)
\end{equation}
and the \emph{pre-Hamiltonian}
\begin{equation}
	H(t,x,a,p,g) := \Lambda^{a,p}_t g(x) + c(t,x,a,p).
\label{H}
\end{equation}
In order to obtain existence and uniqueness of feedback mean field game solutions, in Section~4, we will make the additional hypothesis:
\begin{itemize}
	\item[\textbf{(C)}] For any $t$, $x$, $p$ and $g$ there exists a unique $a^\ast = a^\ast(t,x,p,g)$ minimizer of $H(t,x,a,p, g)$ in $A$;
\end{itemize}
We observe that for any fixed $p$ and $g$ the function $a^\ast(t,x)$ is measurable, thanks to Theorem D.5 in \citet{hernandezlermalasserre96}. We remark also that the limiting dynamics (\ref{mfgi}) always admits a pathwise unique solution thanks to (\ref{lip}).

\subsection{N-player game}

In the prelimit, we consider a system of $N$ symmetric players governed by the dynamics
\begin{equation}
	X^N_i(t)= \xi^N_i + \int_0^t\int_U f(s, X^N_i(s^-), u, \alpha^N_i(s), \mu^N(s^-)) \N_i^N(ds,du) \qquad i=1,\ldots,N,
\label{Nopen}
\end{equation}
where $X^N=(X^N_1, \ldots, X^N_N)$ and $\mu^N(t):= \frac{1}{N} \sum_{i=1}^N \delta_{X^N_i (t)}$. Here, the controls $\alpha_i^N$ are in open-loop form. Let us specify the controls to be used in the $N$-player game.

\begin{definition}
Define the set of \emph{strategy vectors} as
\[
	\mathcal{A}^N := \left\{((\Omega, \mathcal{F}, P; \mathbb{F}) , \alpha^N, \xi^N, \N^N) \right\}
\]
where $(\Omega, \mathcal{F}, P; \mathbb{F})$ is a filtered probability space, 
$\xi^N := (\xi^N_1,\ldots, \xi^N_N)$ is a vector of $N$ i.i.d.\ $\mathcal{F}_0$-measurable random variables with law $m_0$, the initial points, 
$\N^N = (\N_1^N,\ldots,\N_N^N)$ is a vector of $N$ i.i.d.\ stationary Poisson random measures with respect to the filtration 
$\mathbb{F}=(\mathcal{F}_t)_{t\in [0,T]}$ with intensity measure $\nu$ on $U$, $\mathcal{F}_T=\mathcal{F}$, and $\alpha^N= (\alpha^N_1,\ldots, \alpha^N_N)$ is a vector of $A$-valued $\mathbb{F}$-predictable processes $\alpha^N_i$. We will often write 
$\alpha^N\in\mathcal{A}^N$ to indicate the process $\alpha^N$.

Define the set of \emph{feedback strategy vectors} as 
\[
	\mathbb{A}^N:=\left\{((\Omega, \mathcal{F}, P; \mathbb{F}) , \gamma^N, \xi^N, \N^N) \right\}
\]
where $\gamma^N=(\gamma^N_1,\ldots, \gamma^N_N) :[0,T]\times\Sigma^N \rightarrow A^N$  is measurable and the filtered probability space and the  $\xi^N$ and $\N^N$ are as above. We will often write 
$\gamma^N\in\mathbb{A}^N$ to indicate the function $\gamma^N$.
\label{Ncontrols}
\end{definition}

We observe that the above definition of feedback strategy vector is not standard, as it is given together with the probability space and the noise. 
We give such a definition because in this way any strategy gives a unique pathwise solution to dynamics (\ref{Nopen}). Indeed, provided that $\tilde{f}$ is Lipschitz in $p$, we have pathwise existence and uniqueness of solutions to the system (\ref{Nopen}), for any $\alpha^N = (\alpha^N_1,\ldots, \alpha^N_N)  \in\mathcal{A}^N$. 
	
Given a feedback strategy vector $\gamma^N =(\gamma^N_1,\ldots, \gamma^N_N) \in \mathbb{A}^N$, equation (\ref{Nopen}) is written as 
\[
	X^N_i(t)= \xi^N_i + \int_0^t\int_U f(s, X^N_i(s^-), u, \gamma^N_i(s, X^N(s^-)), \mu^N(s^-)) \N^N_i(ds,du)  
\]
for each $i=1,\ldots,N$.
The same assumption as above provides existence and uniqueness of solutions $X^N_i$ to this equation, so we can define the related open-loop control $\alpha^N[\gamma^N]$ by 
\[
	\alpha^N[\gamma^N]_i(s) = \gamma^N_i(s,X^N(s^-)) .
\]
In view of Definition \ref{Ncontrols}, the open-loop control $\alpha^N[\gamma^N]$ has to be given together with a filtered probability space, a vector of initial conditions  and a vector of Poisson random measures, which we impose to be the same as those given with the feedback control $\gamma^N$.

Next, we define the object of the minimization. Let  $\alpha^N = (\alpha^N_1,\ldots, \alpha^N_N)\in \mathcal{A}^N$ be a strategy vector and $X^N = (X^N_i, \ldots, X^N_N)$ be the solution to dynamics (\ref{Nopen}). For $i=1,\ldots,N$ set
\[
	J^N_i(\alpha^N) := E\left[\int_0^T c(t,X_i^N(t), \alpha_i^N(t), \mu^N(t))dt + \Psi (X_i^N(T),\mu^N(T))\right].
\]
Define also $J^N_i(\gamma^N):= J^N_i(\alpha^N[\gamma^N])$ for any $\gamma^N\in \mathbb{A}^N$.
 
We look for \emph{approximate} Nash equilibria for the $N$-player game. So let us define what are the perturbed strategy vectors we consider.

\begin{notation}
Let $\beta$ be an $A$-valued $\mathbb{F}$-predictable process. For a strategy vector $\alpha^N = (\alpha^N_1,\ldots, \alpha^N_N)$ 
 in $\mathcal{A}^N$ denote by
$[\alpha^{N,-i}; \beta]$ the  strategy vector such that 
\[
[\alpha^{N,-i}; \beta]_j =
\begin{cases}
 \alpha^N_j & j\neq i\\
  \beta & j=i.
\end{cases}
\]
For a feedback strategy vector $\gamma^N= (\gamma^N_1,\ldots, \gamma^N_N)\in \mathbb{A}^N$, let $\widetilde{X}^N$ be the solution to
\[
	\begin{cases}
	\widetilde{X}^N_i(t)= &\xi_i^N + \int_0^t\int_U  f(s, \widetilde{X}^N_i(s^-), u, \beta(s), \widetilde{\mu}_N(s^-)) \N_i^N(ds,du)\\
	\widetilde{X}^N_j(t)=& \xi_j^N + \int_0^t\int_U  f(s, \widetilde{X}^N_j(s^-), u, \gamma_j^N(s, \widetilde{X}^N(s^-)), \widetilde{\mu}_N(s^-)) \N_j^N(ds,du) \\
	& \mbox{if } j\neq i.
\end{cases}
\]
Denote then by $[\gamma^{N,-i}; \beta]\in\mathcal{A}^N$ the strategy vector such that
\[
	[\gamma^{N,-i}; \beta]_j (t) =
\begin{cases}
 \gamma^N_j (t, \widetilde{X}^N_j(t^-)) & j\neq i\\
  \beta(t) & j=i.
\end{cases}
\]
\label{n1}
\end{notation}

\begin{definition}
Let $\epsilon>0$. A strategy vector $\alpha_N$ is said to be an \emph{$\epsilon$-Nash equilibrium} if for each $i=1,\ldots,N$ 
\[
	J^N_i(\alpha^N) \leq J_N^i([\alpha^{N,-i}; \beta]) + \epsilon
\]
for every $\beta$ such that $[\alpha^{N,-i}; \beta]$ is a strategy vector.

A vector $\gamma^N \in \mathbb{A}^N$ is called a \emph{feedback} $\epsilon$-Nash equilibrium if 
\[
	J^N_i(\gamma^N) \leq  J_N^i([\gamma^{N,-i}; \beta]) + \epsilon
\]
for every $\beta$ such that $[\gamma^{N,-i}; \beta]$ is a strategy vector.

\label{eqnash}
\end{definition}

We remark that the above definition of feedback $\epsilon$-Nash equilibrium is not standard. 
Indeed, the perturbed strategy vector $[\gamma^{N,-i}; \beta]$ is usually required to be in feedback form. In our definition, a slightly more restrictive (or stronger) condition is used since the perturbing strategy $\beta$ is allowed to be in open-loop form. As a consequence, the approximation result of Section~5 will be slightly stronger than with the standard definition.

\subsection{Mean field game}

The \emph{mean field} limiting system  consists of a single player whose state evolves according to the dynamics
\begin{equation}
	X(t) = \xi +\int_0^t \int_U f(s,X(s^-), u, \alpha(s), m(s)) \N(ds,du),\quad t\in [0,T].
\label{mfg}
\end{equation}
Here the empirical measure appearing in (\ref{Nopen}) is replaced by a deterministic \emph{flow} of probability measures $m: [0,T]\longrightarrow S$.

\begin{definition}
The set of \emph{open-loop controls} is the set  
\[
	\mathcal{A}:= \left\{((\Omega, \mathcal{F}, P; \mathbb{F}) , \alpha, \xi, \N) \right\}
\]
where  $(\Omega, \mathcal{F}, P; \mathbb{F})$ is a filtered probability space, $\xi$ is an $\mathcal{F}_0$-measurable random variable with law $m_0$, the initial condition, $\N$ is a stationary Poisson random measure with respect to the filtration 
$\mathbb{F}=(\mathcal{F}_t)_{t\in [0,T]}$ with intensity measure $\nu$ on $U$, $\mathcal{F}_T=\mathcal{F}$, and $\alpha$ is an $A$-valued $\mathbb{F}$-predictable process. We will often write 
$\alpha\in\mathcal{A}$ to indicate the process $\alpha$.

Define the set of \emph{feedback controls} as 
\[
	\mathbb{A}:=\left\{((\Omega, \mathcal{F}, P; \mathbb{F}) , \gamma, \xi, \N) \right\}
\]
where $\gamma:[0,T]\times\Sigma \rightarrow A$  is measurable and the filtered probability space, the initial condition and the Poisson random measure $\N$ are as above. We will often write 
$\gamma\in\mathbb{A}$ to indicate the function $\gamma$.
\label{controls}
\end{definition}
	
We remark that the feedback control is given with the probability space and the noise, in analogy with Definition \ref{Ncontrols} for the prelimit system.

Thanks to the Lipschitz condition (\ref{lip}), the limiting dynamics is well defined. More precisely, given any open-loop control $((\Omega, \mathcal{F}, P; \mathbb{F}) , \alpha, \xi, \N) \in\mathcal{A}$ and flow of measures $m\in\mathcal{L}$, there exists a pathwise unique solution $X$ of Eq.~(\ref{mfg}), which we will denote by $X_{\alpha, m}$. Similarly, given any feedback control $((\Omega, \mathcal{F}, P; \mathbb{F}) , \gamma, \xi, \N) \in\mathbb{A}$ and flow of measures $m\in\mathcal{L}$, there exists a pathwise unique process $X = X_{\gamma,m}$ solving
\begin{equation}
	X(t) = \xi +\int_0^t \int_U f(s,X(s^-), u,\gamma(s,X(s^-)) , m(s)) \N(ds,du),\quad t\in [0,T].
\label{Xfeed}
\end{equation}
The corresponding open-loop control is then defined as
\begin{equation}
	\alpha^{\gamma} (t) := \gamma(t,X_{\gamma,m}(t^-)).
\label{algam}
\end{equation}

In view of Definition \ref{controls}, the open-loop control $\alpha^{\gamma}$ has to be given together with a filtered probability space, an initial condition and a Poisson random measure, which we impose to be the same as those given with the feedback control $\gamma$.

We define the object of the minimization for the mean field game. For any $\alpha \in \mathcal{A}$ and $m\in \mathcal{L}$ set
\[
	J(\alpha, m) := E\left[ \int_0^T c(s,X_{\alpha, m} (s), \alpha(s), m(s))ds + \psi(X_{\alpha, m} (T), m(T)) \right].
\]
Define also $J(\gamma,m):= J(\alpha^\gamma,m)$ for any $\gamma\in \mathbb{A}$.

The notion of solution for the limiting mean field game, which will provide approximate Nash equilibria for the $N$-player game, is the following.

\begin{definition}
An \emph{open-loop solution of the mean field game (\ref{mfg})} is a triple
\[
	\left( \left((\Omega, \mathcal{F}, P; \F), \alpha, \xi, \N \right), m, X  \right)
\]
such that 
\begin{enumerate}
	\item $\left((\Omega, \mathcal{F}, P; \F), \alpha, \xi, \N \right) \in \mathcal{A}$, $m\in\mathcal{L}$,
	  $(X(t))_{t\in [0,T]}$ is adapted to the filtration $\mathbb{F}$ and $X = X_{\alpha, m}$;

	\item \emph{Optimality}: $J(\alpha, m) \leq J(\beta, m)$ for every $\beta \in \mathcal{A}$;

	\item \emph{Mean Field Condition}: $Law (X(t)) = m(t)$ for every $t\in [0,T]$.
\end{enumerate}

We say that $\left( \left((\Omega, \mathcal{F}, P; \F), \gamma, \xi, \N \right), m, X  \right)$ is a \emph{feedback solution of the mean field game} if $\gamma\in\mathbb{A}$ and 
$\left( \left((\Omega, \mathcal{F}, P; \F), \alpha^\gamma, \xi, \N \right), m, X  \right)$ is an open-loop solution of the mean field game, where $\alpha^\gamma$ is defined in (\ref{algam}).

In our writing, we will often drop the filtered probability space and the Poisson random measure from the notation.

\label{sol}
\end{definition}

In condition (3) of the above definition, $Law(X(t)):= P\circ X(t)^{-1}$ as usual. Let us denote by $Flow(X):[0,T]\longrightarrow S$ the \emph{flow} of the process $X$, that is, $Flow(X).t:= Law(X(t))$. Then the mean field condition can be written as
$Flow(X) =m$.

\subsection{Relaxed controls}

The space $\mathcal{A}$ is not itself compact. In order to always have convergence along subsequences, we need to enlarge the space of controls, considering relaxed controls and related relaxed Poisson measures. They are used only for the limiting system.

\begin{definition}
A \emph{deterministic relaxed control} is a measure $\rho$ on the Borel sets $\mathcal{B}([0,T]  \times A)$ such that
\begin{equation}
\rho( [0,t[ \times A) = \rho( [0,t] \times A) =t \quad \forall t\in [0,T].
\label{relcon}
\end{equation}
The space of deterministic relaxed controls will be denoted by $\mathcal{D}$.
\label{defrel}
\end{definition}

Given $\rho\in\mathcal{D}$, the time derivative exists for Lebesgue-almost every $t\in (0,T]$; it is the probability measure $\rho_t$ on $A$ given by
\begin{equation}
\rho_t(E) := \lim_{\delta\rightarrow0} \frac{\rho( [t-\delta,t] \times E)}{\delta},\quad E\in\mathcal{B}(A).
\label{defder}
\end{equation}
As a consequence, $\rho$ can be factorized according to
\begin{equation}
\rho(dt,da) = \rho_t(da)dt.
\end{equation}

The space $\mathcal{D}$ is endowed with the topology of weak convergence of measures, i.e. $\rho_n\rightarrow\rho$ if and only if 
\begin{equation}
\int_0^T \int_A\varphi(s,a)\rho_n(ds,da) \quad \longrightarrow \quad \int_0^T \int_A\varphi(s,a)\rho(ds,da)
\label{conti}
\end{equation}
for every continuous $\varphi$ on $[0,T]\times A$. Moreover there exists a metric  which makes $\mathcal{D}$ a compact metric space \citep[for instance,][]{kushnerdupuis01}. 
\begin{definition}
The space of \emph{(stochastic) relaxed controls} is 
\[
	\mathcal{R}= \left\{((\Omega, \mathcal{F}, P; \mathbb{F}), \rho, \xi, \N) \right\}
\]
where  $(\Omega, \mathcal{F}, P; \mathbb{F})$ is a filtered probability space, $\rho$ is a $\mathcal{D}$-valued random variable such that $\rho( [0,\cdot] \times E)$ is $\mathbb{F}$-adapted for every $E\in \mathcal{B}(A)$, and $\N$ is a stationary Poisson random measure with respect to the filtration $\mathbb{F}$ with intensity measure $\nu$ on $U$.
We will often write $\rho\in\mathcal{R}$ to denote the process $\rho$.

The space of  \emph{relaxed feedback controls} is the set
\[
	\widehat{\mathbb{A}} :=  \left\{((\Omega, \mathcal{F}, P; \mathbb{F}), \widehat{\gamma}, \xi, \N) \right\}
\]
where $\widehat{\gamma}: [0,T]\times \Sigma \longrightarrow \mathcal{P}(A)$ is measurable, $\mathcal{P}(A)$ is endowed with the topology of weak convergence, and the filtered probability space, the initial condition and the Poisson random measure are as above. We will often write $\widehat{\gamma}\in\widehat{\mathbb{A}} $ to denote the process $\widehat{\gamma}$.
\label{relcontrols}
\end{definition}

The relaxed feedback control is given with the probability space and the noise, in analogy with Definition \ref{controls}. 
Because of (\ref{defder}), the derivative $(\rho_t(E))_{0\leq t\leq T}$ is an $\mathbb{F}$-predictable process for any $E\in\mathcal{B}(A)$.
An ordinary open-loop control $\alpha\in\mathcal{A}$ can be viewed as a relaxed control $\rho^{\alpha}\in\mathcal{R}$ in which the derivative in time is a Dirac measure:
\[
	\rho^{\alpha}([0,t]\times E)=\int_0^t \rho^{\alpha}_s (E) ds = \int_0^t \delta_{\alpha(s)}(E) ds .
\]

We also have to introduce the corresponding \emph{relaxed Poisson measure} in order to have well-defined dynamics. This will be done properly in Appendix~A. Given any $\rho\in\mathcal{R}$, Borel sets $U_0\subseteq U$, $A_0\subseteq A$, the relaxed Poisson measure $\N_\rho$ related to the relaxed control $\rho$ has the property that the processes
\begin{equation}
	\N_\rho(t,U_0,A_0)- \nu(U_0)\rho([0,t]\times A_0)
\label{mart}
\end{equation}
are $\mathbb{F}$-martingales, and are orthogonal for disjoint $U_0\times A_0$. This martingale property and the fact that $\N_\rho$ is a counting measure valued process define the distribution of $\N_\rho$ and the joint law of $(\N_\rho, \rho, \xi, \N)$ uniquely (see Appendix A).
The martingale property (\ref{mart}) also implies that the process
\begin{equation}
	\int_0^t\int_U\int_A \varphi(s,u,a) \N_\rho(ds,du,da) - \int_0^t\int_U\int_A \varphi(s,u,a) \nu(du)\rho_s(da)ds
\label{M}
\end{equation}
is an $\mathbb{F}$-martingale, for any bounded and measurable $\varphi$.
For an ordinary control $\alpha\in\mathcal{A}$ (or the relaxed control it induces), the corresponding relaxed Poisson measure is explicitly given by
\begin{equation}
	\N_\alpha(t,U_0,A_0) := \int_0^t \int_{U_0}\mathbbm{1}_{A_0}(\alpha(s))  \N(ds,du).
\label{Nal}
\end{equation}

The stochastic differential equation (\ref{mfg}) in this more general framework with a relaxed Poisson measure is written as 
\begin{equation}
	X(t) = \xi + \int_0^t\int_U\int_A  f(s,X(s^-), u, a, m(s)) \N_\rho(ds,du,da)
\label{mfgr}
\end{equation}
for any relaxed control $\rho\in\mathcal{R}$ and $m\in\mathcal{L}$.

Given a  relaxed feedback control $\widehat{\gamma}\in \widehat{\mathbb{A}}$ and a process $X$, define the corresponding relaxed open-loop control through
\begin{equation}
	\rho^{\widehat{\gamma},X}(dt,da) := [\widehat{\gamma}(t,X(t^-))](da)dt.
\label{rg}
\end{equation}
Let $\mathcal{N}_{\rho^{\widehat{\gamma},X}}$ be the relaxed Poisson measure corresponding to $\rho^{\widehat{\gamma},X}$. Equation (\ref{mfgr}) then becomes
\begin{equation}
	X(t) = \xi + \int_0^t\int_U\int_A  f(s,X(s^-), u, a, m(s)) \N_{\rho^{\widehat{\gamma},X}}(ds,du,da),
\label{mfgrf}
\end{equation}
where the solution process $X$ appears also in the relaxed Poisson measure. 

The proof of the following lemma is given in Appendix A.1.
\begin{lemma}
For any $m\in\mathcal{L}$ and $\rho\in\mathcal{R}$, respectively $\widehat{\gamma}\in \widehat{\mathbb{A}}$, there exists a pathwise unique  solution to the stochastic differential equation (\ref{mfgr}), respectively (\ref{mfgrf}). 
\label{lemmaexi}
\end{lemma}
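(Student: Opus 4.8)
The plan is to solve the equation pathwise by an interlacing recursion rather than by a fixed-point (Picard/Gronwall) iteration, exploiting that in this finite-state setting the driving noise has only finitely many jumps. Since $U$ is compact and $\nu$ is Radon we have $\nu(U)<\infty$, and by (\ref{relcon}) the compensator $\nu(du)\rho_s(da)\,ds$ of $\N_\rho$ has finite total mass $T\nu(U)$; together with the construction of $\N_\rho$ carried out in Appendix~A, this ensures that, outside a $P$-null set, $\N_\rho$ has finitely many atoms in $[0,T]\times U\times A$, sitting at pairwise distinct times. So all that remains is to prescribe how $X$ jumps at each atom, and there is no analytic estimate to make.

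Concretely, I would fix $\omega$ in the good set and enumerate the atoms of $\N_\rho$ as $(\tau_k,u_k,a_k)$, $k=1,\dots,K=K(\omega)<\infty$, with $0<\tau_1<\dots<\tau_K\le T$; each $\tau_k$ is an $\mathbb{F}$-stopping time and $(u_k,a_k)$ is $\mathcal{F}_{\tau_k}$-measurable. Then define $X$ recursively by $X\equiv\xi$ on $[0,\tau_1)$ and, for $k=1,\dots,K$,
\[
X(\tau_k):=X(\tau_k^-)+f\bigl(\tau_k,X(\tau_k^-),u_k,a_k,m(\tau_k)\bigr),\qquad X\equiv X(\tau_k)\ \text{ on }[\tau_k,\tau_{k+1}),
\]
with the convention $\tau_{K+1}:=T$. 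An induction on $k$, using the standing requirement $f(t,x,u,a,p)\in\{1-x,\dots,d-x\}$, gives $X(\tau_k)\in\Sigma$ for every $k$, so $X$ has c{\`a}dl{\`a}g $\Sigma$-valued paths; adaptedness holds because $\xi$ is $\mathcal{F}_0$-measurable, $m$ is deterministic, and each update is $\mathcal{F}_{\tau_k}$-measurable (measurability of $f$ suffices here, continuity is not needed). Since $\N_\rho$ charges only the points $(\tau_k,u_k,a_k)$,
\[
\int_0^t\!\int_U\!\int_A f(s,X(s^-),u,a,m(s))\,\N_\rho(ds,du,da)=\sum_{k:\,\tau_k\le t}f\bigl(\tau_k,X(\tau_k^-),u_k,a_k,m(\tau_k)\bigr)=X(t)-\xi,
\]
i.e.\ $X$ solves (\ref{mfgr}). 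Pathwise uniqueness follows from the same identity: any solution $X'$ driven by the same $\N_\rho$ is constant between consecutive $\tau_k$'s and must satisfy $X'(\tau_k)-X'(\tau_k^-)=f(\tau_k,X'(\tau_k^-),u_k,a_k,m(\tau_k))$, so $X'=X$ by induction.

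For the feedback equation (\ref{mfgrf}) I would run the same recursion, but now it simultaneously produces the driving relaxed Poisson measure and $X$: by the same finiteness argument the base measure $\N$ has finitely many atoms $(\tau_k,u_k)$ at distinct times, and, using the auxiliary randomisation employed in Appendix~A to attach $A$-marks, at step $k$ the pre-jump value $X(\tau_k^-)=X(\tau_{k-1})$ is already determined, so one draws $a_k$ from $\widehat{\gamma}(\tau_k,X(\tau_k^-))$ and updates $X$ as above. The point measure with atoms $(\tau_k,u_k,a_k)$ is then, by construction, the relaxed Poisson measure $\N_{\rho^{\widehat{\gamma},X}}$ attached to $\rho^{\widehat{\gamma},X}(dt,da)=[\widehat{\gamma}(t,X(t^-))](da)\,dt$, so that $(X,\N_{\rho^{\widehat{\gamma},X}})$ solves (\ref{mfgrf}); pathwise uniqueness means that two solutions built from the same $(\N,\xi)$ and the same auxiliary randomisation coincide, which again follows step by step because $a_k$ is a measurable function of $X(\tau_k^-)$ and the auxiliary variable alone.

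I expect the only genuinely delicate point to be the feedback case (\ref{mfgrf}): one has to phrase existence and pathwise uniqueness correctly given that the driving measure $\N_{\rho^{\widehat{\gamma},X}}$ is itself part of the unknown, and to verify that the measure produced by the recursion really enjoys the martingale property (\ref{mart}) with the state-dependent compensator $\nu(du)[\widehat{\gamma}(s,X(s^-))](da)\,ds$. This is precisely where the explicit construction of relaxed Poisson measures is needed, which is why the proof is deferred to Appendix~A.1; everything for (\ref{mfgr}) is routine once the finiteness of the jump set has been recorded.
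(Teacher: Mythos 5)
Your argument for the open-loop equation (\ref{mfgr}) is correct but takes a genuinely different route from the paper's. The paper works on the space of processes with c\`adl\`ag paths equipped with the norm $E[\sup_{0\le t\le T}|X(t)|]$ and shows the solution map is a contraction for $T<1/K_1$ using the Lipschitz bound (\ref{lip}) and the martingale property (\ref{M}), then iterates in time. Your interlacing recursion instead exploits that the compensator has finite total mass $T\nu(U)$, so $\N_\rho$ has almost surely finitely many atoms at pairwise distinct times (distinctness being built into the canonical space of Appendix~A), and the SDE collapses to a finite recursion. This is more elementary: it needs no Lipschitz estimate at all, only the structural condition $f(t,x,u,a,p)\in\{1-x,\dots,d-x\}$ keeping the state in $\Sigma$, and it delivers existence and pathwise uniqueness in one stroke for arbitrary $T$. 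In this finite-activity setting your argument is arguably the more transparent one.

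For the feedback equation (\ref{mfgrf}) there is a mismatch you should repair. You defer the key step --- that marking the atoms of $\N$ with draws $a_k$ from $\widehat{\gamma}(\tau_k,X(\tau_k^-))$ produces a random measure with compensator $\nu(du)[\widehat{\gamma}(s,X(s^-))](da)\,ds$ --- to ``the auxiliary randomisation employed in Appendix~A''. The paper's Appendix~A contains no such marking construction: the relaxed Poisson measure is defined abstractly via Jacod's theorem (Theorem~\ref{Jac}), which pins down its law from the compensator, and explicitly only as a limit of the measures (\ref{Nal}) along a chattering sequence; the paper's own feedback proof keeps $\N_{\rho^{\widehat{\gamma},X}}$ abstract and runs the contraction with a total-variation estimate on $\N_{\rho^{\widehat{\gamma},X}}-\N_{\rho^{\widehat{\gamma},Y}}$, again via (\ref{mart}) and $|x-y|\ge 1$ on $\Sigma$. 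To complete your route you must actually prove the marking claim (it is the standard marking theorem for point processes with a predictable mark kernel, so this is a fillable gap rather than a wrong idea), note that the underlying space may need to be enlarged to carry the auxiliary variables, and state explicitly that pathwise uniqueness then means uniqueness given $(\xi,\N)$ and those variables. With that supplied, the uniqueness part of Theorem~\ref{Jac} identifies your constructed measure with $\N_{\rho^{\widehat{\gamma},X}}$ and the argument closes.
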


The solutions to (\ref{mfgr}) and (\ref{mfgrf}) will be denoted by $X_{\rho,m}$ and $X_{\widehat{\gamma},m}$ respectively. For 
$\widehat{\gamma}\in \widehat{\mathbb{A}}$, let $\rho^{\widehat{\gamma}}$ denote the corresponding relaxed control defined by (\ref{rg}), that is, $\rho^{\widehat{\gamma}}$ is the relaxed open-loop control such that
\begin{equation} \label{rgg}
	\rho_t^{\widehat{\gamma}}(da) := [\widehat{\gamma}(t,X_{\widehat{\gamma},m}(t^-))](da).
\end{equation}
In view of Definition~\ref{relcontrols}, the relaxed open-loop control $\rho^{\widehat{\gamma}}$ has to be given together with a filtered probability space, an initial condition and a Poisson random measure, which we impose to be the same as those coming with the relaxed feedback control $\widehat{\gamma}$.

Let $\rho\in\mathcal{R}$ and $m\in\mathcal{L}$. Let $X=X_{\rho,m}$. Thanks to the martingale property (\ref{M}), we obtain that the process
\begin{align}
M_g^X(t)&=  g(X(t))-g(X(0)) \label{marg}\\
&- \int_0^t\int_U\int_A \left[ g(X(s) + f(s,X(s), u, a, m(s))) -g(X(s))\right] \nu(du)\rho_s(da)ds
\nonumber
\end{align}
is an $\mathbb{F}$-martingale, for any $g\in\mathbb{R}^d$. This yields the Dynkin formula 
\begin{align}
	&E[g(X(t))] - E[g(\xi)]
\label{dynrel} \\ 
	&= E\int_0^t\int_U\int_A \left[ g(X(s) + f(s,X(s), u, a, m(s))) -g(X(s))\right] \nu(du)\rho_s(da)ds.
\nonumber
\end{align}

The cost to be minimized is 
\begin{equation}
	J(\rho, m) := E\left[ \int_0^T \int_A c(s,X_{\rho, m} (s), a, m(s))\rho_s(da)ds + \psi(X_{\rho, m} (T), m(T)) \right].
\label{JR}
\end{equation}
Define also $J(\widehat{\gamma},m):=J(\rho^{\widehat{\gamma}}, m) $ for $\widehat{\gamma}\in \widehat{\mathbb{A}}$. 
The definitions of \emph{relaxed solution} of the mean field game (\ref{mfgr}) and \emph{relaxed feedback solution} are analogous to Definition \ref{sol}, where ordinary controls are replaced by relaxed controls.

\begin{definition}
A \emph{relaxed solution of the mean field game (\ref{mfg})} is a triple
\[
	\left( \left((\Omega, \mathcal{F}, P; \F), \rho, \xi, \N \right), m, X  \right)
\]
such that 
\begin{enumerate}
	\item $\left((\Omega, \mathcal{F}, P; \F), \rho, \xi, \N \right) \in \mathcal{R}$, $m\in\mathcal{L}$,
	  $(X(t))_{t\in [0,T]}$ is adapted to the filtration $\mathbb{F}$ and $X = X_{\rho, m}$;

	\item \emph{Optimality}: $J(\rho, m) \leq J(\sigma, m)$ for every $\sigma \in \mathcal{R}$;

	\item \emph{Mean Field Condition}: $Law (X(t)) = m(t)$ for every $t\in [0,T]$.
\end{enumerate}

We say that $\left( \left((\Omega, \mathcal{F}, P; \F), \widehat{\gamma}, \xi, \N \right), m, X  \right)$ 
is a \emph{relaxed feedback solution of the mean field game} if $\widehat{\gamma}\in\widehat{\mathbb{A}}$ and 
$\left( \left((\Omega, \mathcal{F}, P; \F), \rho^{\widehat{\gamma}}, \xi, \N \right), m, X  \right)$ is a relaxed solution of the mean field game, where $\rho^{\widehat{\gamma}}$ is defined in (\ref{rgg}).

In our writing, we will often drop the filtered probability space and the Poisson random measure from the notation.

\label{solrel}
\end{definition}

In Section~3.2 we will show the existence of relaxed mean field game solutions via a fixed point argument, while existence of a relaxed feedback mean field game solution is established in Section~3.3. 

We will use the characterization of solutions to \eqref{mfgr} via the controlled martingale problem. The proof of the following lemma is omitted; it can be derived by mimicking the one of Theorem 2.8.1 in \citet[p.\,42]{kushner90}.

\begin{lemma}
Let $\left((\Omega', \mathcal{F}', P'; \F'), \rho, \xi, \N \right) \in \mathcal{R}$ and $m\in\mathcal{L}$. Then $X$ solves equation (\ref{mfgr}) in distribution if and only if the process $M_g^X(t)$ defined in (\ref{marg})
is an $\mathbb{F}$-martingale for any $g\in\mathbb{R}^d$.  The underlying filtered probability space can always be assumed to be
$D([0,T],\Sigma)\times\Omega$, where $\Omega$ is the canonical space for $(\N_\rho, \rho, \xi, \N)$ defined in Appendix A, $\mathbb{F}$ the canonical filtration, and $X$ is the canonical process. 

The martingale property holds if and only if 
\begin{equation}
	E\left[h(X(t_i); i\leq j ) (M_g^X(t+s)-M_g^X(t))\right] =0
\label{prmar}
\end{equation}
for every $h:\Sigma^j \rightarrow \mathbb{R}$ and every choice of $j$, $t$, $s$, $t_i$, $i=1,\ldots,j$ such that $0\leq t_i \leq t\leq t+s$.

\label{conmar}
\end{lemma}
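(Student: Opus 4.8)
The plan is to treat the two implications separately and then reduce the martingale property to the family of identities \eqref{prmar}.

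\textit{From solution to martingale.} This is essentially the computation already carried out in the derivation of \eqref{marg}: if $X=X_{\rho,m}$ solves \eqref{mfgr}, then $\N_\rho$ has a.s.\ finitely many atoms in $[0,T]\times U\times A$ (the total intensity $\nu(U)\,T$ being finite), so $X$ is a pure-jump process with finitely many jumps, one has $g(X(t))-g(X(0))=\int_0^t\!\int_U\!\int_A[g(X(s^-)+f(s,X(s^-),u,a,m(s)))-g(X(s^-))]\,\N_\rho(ds,du,da)$, and the martingale property \eqref{M} of $\N_\rho$ — extended to the bounded $\mathbb{F}$-predictable integrand $\varphi(s,u,a):=g(X(s^-)+f(s,X(s^-),u,a,m(s)))-g(X(s^-))$, which is elementary in the present finite-intensity setting — shows that $M_g^X$ is an $\mathbb{F}$-martingale (replacing $X(s^-)$ by $X(s)$ in the compensator is legitimate, since the two differ only at the finitely many jump times). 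In particular the pathwise solution $X_{\rho,m}$ of Lemma~\ref{lemmaexi}, built on the given space, solves the martingale problem.

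\textit{From martingale to solution in distribution.} The heart of the matter is well-posedness of the controlled martingale problem. Put $q_{xy}(s):=\int_U\int_A\mathbbm{1}_{\{y=x+f(s,x,u,a,m(s))\}}\,\nu(du)\,\rho_s(da)$ for $y\ne x$; these are bounded by $\nu(U)$ and $\mathbb{F}$-predictable, and the martingale property of $M_g^X$ for every $g$ says exactly that $X$, started at $\xi$, solves the martingale problem for the random, time-inhomogeneous $Q$-matrices $(q_{xy}(s))$. Following Kushner's Theorem~2.8.1, I would show that the law of $X$ jointly with $(\rho,\xi,\N)$ is thereby determined uniquely: approximate $\rho$ by relaxed controls $\rho^n$ that are piecewise constant in time, with the value on each mesh interval measurable with respect to the $\sigma$-algebra at the interval's left endpoint, so that $\rho^n\to\rho$ in $\mathcal D$ almost surely by Lebesgue differentiation; for each $n$ the associated rates are, on each mesh interval, constant in time and measurable at the left endpoint, hence the conditional law of $X$ over that interval is that of an explicit finite-state Markov chain, and iterating over the mesh pins down the law of $X$ jointly with $(\rho^n,\xi)$; letting $n\to\infty$, using the uniform bound on the rates for tightness (compact containment being automatic since $\Sigma$ is finite) and assumption \textbf{(A)} on $\tilde f$ to identify limit points as solutions of the $\rho$-martingale problem, one obtains uniqueness in law for the original problem. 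Since $X_{\rho,m}$ also solves this martingale problem, by the first part, $X$ has, jointly with $(\rho,\xi,\N)$, the same law as $X_{\rho,m}$, i.e.\ $X$ solves \eqref{mfgr} in distribution; pushing this law forward onto $D([0,T],\Sigma)\times\Omega$ leaves the martingale property intact, since it depends only on the joint law of $(X,\N_\rho,\rho,\xi,\N)$, and this gives the canonical representation in the statement.

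\textit{Reduction to \eqref{prmar} and the main obstacle.} On the canonical space with $X$ the canonical process, the $\mathbb{F}$-martingale property of $M_g^X$ is equivalent, by a monotone-class argument, to $E[(M_g^X(t+s)-M_g^X(t))\,Z]=0$ for $Z$ ranging over a family of bounded random variables generating $\mathcal F_t$; in the finite-state setting one may take for $Z$ the finite products $h(X(t_1),\dots,X(t_j))$ with $0\le t_1\le\cdots\le t_j\le t$, approximating general bounded functionals by such simple ones, which is exactly \eqref{prmar}. The delicate step in all of this is the uniqueness of the controlled martingale problem when the coefficients are random and merely $\mathbb{F}$-predictable: one cannot condition on $\rho$ without destroying the filtration, so the piecewise-constant-in-time approximation together with the stability argument (continuity in $\rho$, tightness of solution laws) — which is the substance of Kushner's Theorem~2.8.1 — is what makes the equivalence go through.
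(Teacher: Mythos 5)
First, note that the paper does not actually write out a proof of Lemma~\ref{conmar}: it explicitly omits it and refers to Theorem~2.8.1 of \citet{kushner90}, so the comparison is with that intended argument rather than with a displayed proof. Your first implication (pathwise solution $\Rightarrow$ martingale) is correct and is essentially the computation the paper itself performs when deriving that $M^X_g$ in (\ref{marg}) is a martingale from the compensator property (\ref{M}); the reduction of the martingale property to the finite-dimensional identities (\ref{prmar}) by a monotone class argument is likewise standard and unobjectionable.

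The converse implication, however, contains a genuine gap. You route it through uniqueness in law of the controlled martingale problem, and you propose to prove that uniqueness by approximating $\rho$ with piecewise-constant predictable controls $\rho^n$ (for which the law of $X$ is pinned down by an explicit Markov-chain construction) and then ``letting $n\to\infty$, using \dots tightness \dots to identify limit points as solutions of the $\rho$-martingale problem''. That last step proves a stability statement --- weak limits of solutions for $\rho^n$ solve the martingale problem for $\rho$ --- but it does not prove uniqueness for $\rho$: nothing in the argument shows that an \emph{arbitrary} solution of the $\rho$-martingale problem arises as such a limit, and uniqueness does not pass through weak limits in this direction. (Even for fixed $n$, determining the conditional law of $X$ on each mesh interval given $\mathcal{F}_{t_k}$ pins down the law of $X$ jointly with the \emph{past} of $(\rho^n,\xi,\N)$; the joint law with the future of the environment needs a further argument.) The route the paper points to is different: from a solution of the martingale problem one \emph{constructs} the relaxed Poisson measure $\N_\rho$ on an extension of the space, using the jump measure of $X$ --- whose compensator is identified by the martingale property --- together with an independent randomization filling in where $X$ does not jump; this is where Theorem~\ref{Jac} and the canonical space of Appendix~A enter. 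One then verifies that $X$ solves \eqref{mfgr} driven by this $\N_\rho$ and only at that point invokes the pathwise uniqueness of Lemma~\ref{lemmaexi} to conclude $X=X_{\rho,m}$ in distribution. Your proposal never constructs $\N_\rho$ from $X$, and the uniqueness statement it substitutes for that construction is not established. To repair the proof you should either carry out the construction of the driving measure, or prove uniqueness in law of the controlled martingale problem by a genuine argument (for instance a conditional version of Theorem~\ref{Jac}), not by the approximation--stability scheme.
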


In Section~4, under additional assumptions, we will prove existence of feedback mean field game solutions (not relaxed); such solutions will be shown to be unique either if the time horizon is small or if the Lasry-Lions monotonicity assumptions apply.

\subsection{Example}

 We show how our assumptions are satisfied for a natural shape of the function $f$ for which, when considering $\alpha$ and $m$ constants, the transition rates of the Markov chain $X$ solution of the dynamics (\ref{mfg}) appear explicitly. 
Consider then $f$ defined by 
\begin{equation}
f(t,x,u) := \sum_{y\in \Sigma} (y-x) \mathbbm{1}_{]0, \lambda(t,x,y)[} (u_y)
\label{effe}
\end{equation}
and  the intensity measure $\nu$ on $U\in \mathcal{B}(\mathbb{R}^d)$ defined by
\begin{equation}
\nu(E) := \sum_{y=1}^d \ell (E \cap U_y),\quad E\in \mathcal{B}(U),
\label{nu}
\end{equation}
where $U_y :=\left\{u\in U : u_z =0 \quad \forall z\neq y\right\}$, which is viewed as a subset of $\mathbb{R}$, 
and $\ell$ is the Lebesgue measure on $\mathbb{R}$.

The function $\lambda$ appearing in (\ref{effe}) yields the transition rates of the Markov chain $X$ solution of (\ref{mfg}), that is, for $x\neq y$, as $h\rightarrow0$,
\begin{equation}
P\left[X(t+h)= y | X(t) =x\right] = \lambda(t,x,y) \cdot h + o(h).
\label{rate}
\end{equation}
Moreover, the measure $\nu$ defined in (\ref{nu}) has the property that
\begin{equation}
\int_U \varphi(u_1,\ldots, u_d) \nu(du) = \sum_{y=1}^d \int_{U_y} \varphi (0,\ldots,u_y,\ldots,0) du_y
\label{intphi}
\end{equation}
for any bounded and measurable $\varphi:\mathbb{R}^d \longrightarrow \mathbb{R}$. In particular,
\begin{equation}
\int_U \sum_{y=1}^d \varphi_y (u_y) \nu (du) = \sum_{y=1}^d \int_{U_y} \varphi_y (u_y) du_y
\label{int}
\end{equation}
for any function $\varphi_y :\mathbb{R} \longrightarrow \mathbb{R}$ such that $\varphi_y(0)=0$, $y\in\Sigma$. 

If we want $f$ to depend also on a control and a flow of measures, we may consider the rate $\lambda$ to depend also on $a\in A$ and $p\in S$, so that (\ref{effe}) is rewritten as 
\begin{equation}
	f(t,x,u,a,p) := \sum_{y\in \Sigma} (y-x) \mathbbm{1}_{]0, \lambda(t,x,y,a,p)[} (u_y).
\label{fcont}
\end{equation}
We also assume that $\lambda$ is bounded by a constant $M$ (which holds a posteriori by the assumptions of the next lemma) and 
$U:= [0,M]^d$.
With this $f$, (\ref{rate}) becomes
\begin{equation}
	P\left[X(t+h)= y | X(t) =x\right] = E_{t,x}[\lambda(t,x,y,\alpha(t),m(t))] \cdot h + o(h),
\label{rate2}
\end{equation}
where $X=X_{\alpha,m}$ is the solution of (\ref{mfg}) under the control $\alpha\in \mathcal{A}$ and flow of measures $m\in\mathcal{L}$ and $E_{t,x}$ denotes expectation with respect to the conditional probability $P[\cdot|X(t)=x]$ provided $P(X(t)=x)>0$. In particular, if $\alpha(t)= \gamma(X(t))$, then the transition rate is $\lambda(t,x,y,\gamma(x),m(t))$. A proof of (\ref{rate}), (\ref{intphi}) and (\ref{rate2}) can be found in \cite{turchi15}, where the examples (\ref{effe}) and (\ref{fcont}) were treated.

Let us check whether our assumptions on the model are satisfied for the above choice of $f$ and $\nu$.

\begin{lemma}
Let $f$ be defined by (\ref{fcont}) and $\nu$  by (\ref{nu}). 
\begin{itemize}
	\item If the rate $\lambda$ appearing in (\ref{fcont}) is  continuous in $t,a$ and $p$, then  (A) holds;
	\item If in addition $\lambda$ is Lipschitz in $p$, then (A') holds;
	\item If in addition $\lambda$ is Lipschitz also in $a$, then (A'') holds.
\end{itemize}
\label{lemf1}
\end{lemma}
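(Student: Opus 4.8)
The plan is to evaluate the left-hand sides of (\ref{continua}), (\ref{lipA}) and (\ref{lipC}) explicitly for the $f$ of (\ref{fcont}) and then read off the required estimates from the assumed regularity of $\lambda$ (recall that in this setting $U=[0,M]^d$ and $\lambda\le M$). The crucial structural remark is that every indicator in (\ref{fcont}) is of the form $\mathbbm{1}_{]0,\lambda[}$, hence vanishes at the origin; so along the axis $U_z=\{u:u_w=0\ \forall w\neq z\}$ only the $z$-th summand of $f$ survives, i.e.\
\[ f(t,x,u,a,p)=(z-x)\,\mathbbm{1}_{]0,\lambda(t,x,z,a,p)[}(u_z),\qquad u\in U_z. \]
Consequently the integrand $|f(t,x,u,a,p)-f(s,y,u,b,q)|$ equals $\nu$-a.e.\ a sum $\sum_{z\in\Sigma}\varphi_z(u_z)$ with each $\varphi_z$ vanishing at $0$, so that (\ref{int}) applies; combined with the elementary identity $\int_{\mathbb{R}}|\mathbbm{1}_{]0,\alpha[}(v)-\mathbbm{1}_{]0,\beta[}(v)|\,dv=|\alpha-\beta|$ for $\alpha,\beta\in[0,M]$, this yields, in the equal-state case,
\[ \int_U\bigl|f(t,x,u,a,p)-f(s,x,u,b,q)\bigr|\,\nu(du)=\sum_{z\in\Sigma}|z-x|\cdot\bigl|\lambda(t,x,z,a,p)-\lambda(s,x,z,b,q)\bigr|. \]

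For (A), I would bound $|z-x|\le d$ and use that, for each of the finitely many pairs $x,z\in\Sigma$, the map $\lambda(\cdot,x,z,\cdot,\cdot)$ is continuous on the compact set $[0,T]\times A\times S$, hence uniformly continuous there; taking the maximum over $x,z$ of the finitely many moduli produces one modulus $w_\lambda$, and then $w_f:=d^2w_\lambda$ verifies (\ref{continua}). Boundedness of $\tilde f$ is automatic since $|f|\le d$ and $\nu(U)<\infty$.

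For (A') and (A'') the state appears both in the jump coefficient $z-x$ and inside $\lambda$, so on each axis $U_z$ I first split
\[ (z-x)\mathbbm{1}_{]0,\lambda_1[}-(z-y)\mathbbm{1}_{]0,\lambda_2[}=(z-x)\bigl(\mathbbm{1}_{]0,\lambda_1[}-\mathbbm{1}_{]0,\lambda_2[}\bigr)+(y-x)\mathbbm{1}_{]0,\lambda_2[}, \]
with $\lambda_1=\lambda(t,x,z,a,p)$ and $\lambda_2=\lambda(t,y,z,b,q)$; integrating and using $\lambda_2\le M$ gives the per-axis bound $|z-x|\,|\lambda_1-\lambda_2|+M|y-x|$. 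I then estimate $|\lambda_1-\lambda_2|$ by inserting intermediate values that change only the state: since $x\neq y$ forces $|x-y|\ge1$ and $\lambda$ is bounded by $M$, the state part contributes at most $2M|x-y|$, while the remaining difference is controlled by the assumed Lipschitz continuity of $\lambda$ in $p$ (for (A')) and additionally in $a$ (for (A'')). Summing the $d$ axis contributions and collecting constants produces (\ref{lipA}), respectively (\ref{lipC}), with a $K_1$ depending only on $d$, $M$ and the Lipschitz constant of $\lambda$; note that (A') in particular subsumes the basic Lipschitz condition (\ref{lip}).

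The only point requiring care — the main obstacle, modest as it is — is the bookkeeping in (A')/(A''): one must retain the extra term $(y-x)\mathbbm{1}_{]0,\lambda_2[}$ coming from the state-dependence of the jump heights, and one must exploit the discreteness of $\Sigma$ (via $|x-y|\ge1$ when $x\neq y$) to absorb the ``state portion'' of $|\lambda_1-\lambda_2|$ into a Lipschitz-type bound even though $\lambda$ is assumed merely continuous, not Lipschitz, in the state variable. Everything else is a routine computation with indicator functions.
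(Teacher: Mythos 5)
Your proposal is correct and follows essentially the same route as the paper: reduce the integral over $U$ to a sum over the axes $U_z$ via (\ref{int}), use the identity $\int|\mathbbm{1}_{]0,\alpha[}-\mathbbm{1}_{]0,\beta[}|\,dv=|\alpha-\beta|$ to turn the left-hand sides into $\sum_z|z-x|\,|\lambda-\lambda|$, and then invoke uniform continuity (for (A)) or Lipschitz continuity (for (A'), (A'')) of $\lambda$. The only difference is that the paper carries out the computation solely for equal states and disposes of the case $x\neq y$ in (\ref{lipA})--(\ref{lipC}) implicitly via boundedness and $|x-y|\geq 1$, whereas you make that step explicit with the decomposition $(z-x)\mathbbm{1}_{]0,\lambda_1[}-(z-y)\mathbbm{1}_{]0,\lambda_2[}=(z-x)(\mathbbm{1}_{]0,\lambda_1[}-\mathbbm{1}_{]0,\lambda_2[})+(y-x)\mathbbm{1}_{]0,\lambda_2[}$ --- a welcome extra bit of care, not a different proof.
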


\begin{proof}
Let $t,s\in [0,T]$, $a,b\in A$, $p,q\in S$ and fix $x\in \Sigma$. Then
\begin{align*}
	&\int_U |f(t, x, u, a, p)- f(s, x,u,b, q)|\nu(du) \\
	&=\int_U \left|\sum_{y\in \Sigma} (y-x) \left[\mathbbm{1}_{]0, \lambda(t,x,y,a,p)[} (u_y) - \mathbbm{1}_{]0, \lambda(s,x,y,b,q)[} (u_y)\right]\right| \nu(du)\\
	&\leq \int_U \sum_{y\neq x} |y-x| \left|\mathbbm{1}_{]0, \lambda(t,x,y,a,p)[} (u_y) - \mathbbm{1}_{]0, \lambda(s,x,y,b,q)[} (u_y)\right| \nu(du)\\
	&\leq 2d \int_U \sum_{y\neq x} \left|\mathbbm{1}_{]0, \lambda(t,x,y,a,p)[} (u_y) - \mathbbm{1}_{]0, \lambda(s,x,y,b,q)[} (u_y)\right| \nu(du).
\end{align*}

Applying (\ref{int}), the last expression above is equal to
\begin{multline*}
	2d  \sum_{y\neq x} \int_U \left|\mathbbm{1}_{]0, \lambda(t,x,y,a,p)[} (u_y) - \mathbbm{1}_{]0, \lambda(s,x,y,b,q)[} (u_y)\right|
du_y \\
	= 2d  \sum_{y\neq x} | \lambda(t,x,y,a,p) -\lambda(s,x,y,b,q)|,
\end{multline*}
which gives the claims.
\end{proof}

In order to verify assumption (C), we need additional hypotheses on the structure of the model.

\begin{lemma}
Let $f$ be defined by (\ref{fcont}) and $\nu$  by (\ref{nu}). Assume that $A$ is a compact and convex subset of a metric topological vector space. Let the running cost $c$ be strictly convex in $a$ and the rate $\lambda$ appearing in (\ref{fcont}) be affine (in the sense of being both convex and concave) in $a$. Then assumption (C) is satisfied.
\label{ffeed}
\end{lemma}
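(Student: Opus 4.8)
The plan is to compute the generator $\Lambda^{a,p}_t g(x)$ explicitly for the particular $f$ of (\ref{fcont}) and $\nu$ of (\ref{nu}), read off that it is affine in $a$, and then to deduce strict convexity of the pre-Hamiltonian $H$ in $a$; existence and uniqueness of the minimizer in (C) then follow from standard convexity arguments on the compact convex set $A$.

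First I would exploit the structure of $\nu$. Since $\nu$ is carried by $\bigcup_{y=1}^d U_y$ and on $U_y$ only the $y$-th coordinate of $u$ is nonzero, the indicator $\mathbbm{1}_{]0,\lambda(t,x,z,a,p)[}(u_z)$ vanishes for every $z\neq y$ (the interval is open at $0$), so that $f(t,x,u,a,p)=(y-x)\mathbbm{1}_{]0,\lambda(t,x,y,a,p)[}(u_y)$ for $u\in U_y$, whence $g(x+f(t,x,u,a,p))-g(x)=[g(y)-g(x)]\mathbbm{1}_{]0,\lambda(t,x,y,a,p)[}(u_y)$ there. Integrating against $\nu$ and using $U=[0,M]^d$, $0\le\lambda\le M$ gives
\begin{equation*}
\Lambda^{a,p}_t g(x)=\sum_{y=1}^d\int_{U_y}[g(y)-g(x)]\,\mathbbm{1}_{]0,\lambda(t,x,y,a,p)[}(u_y)\,du_y=\sum_{y\neq x}[g(y)-g(x)]\,\lambda(t,x,y,a,p),
\end{equation*}
which is just the generator of a continuous-time chain with jump rates $\lambda$. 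With $t,x,p,g$ frozen, each coefficient $g(y)-g(x)$ is a constant and each map $a\mapsto\lambda(t,x,y,a,p)$ is affine (both convex and concave) by hypothesis; a constant multiple of a both-convex-and-concave function retains that property whatever the sign of the constant, and so does a finite sum of such functions. Hence $a\mapsto\Lambda^{a,p}_t g(x)$ is affine on $A$.

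It then remains to add the cost. Since $c(t,x,\cdot,p)$ is strictly convex, $H(t,x,\cdot,p,g)=\Lambda^{a,p}_t g(x)+c(t,x,\cdot,p)$ is a sum of an affine and a strictly convex function, hence strictly convex on the convex set $A$. Continuity of $H$ in $a$ (from continuity of $c$ in $a$, assumption (B), and of $\lambda$ in $a$, in force whenever (C) is invoked, e.g.\ under (A$''$)) together with compactness of $A$ yields a minimizer; strict convexity makes it unique, for if $a_1\neq a_2$ both minimized $H$ then $\tfrac12(a_1+a_2)\in A$ and $H(t,x,\tfrac12(a_1+a_2),p,g)<\tfrac12 H(t,x,a_1,p,g)+\tfrac12 H(t,x,a_2,p,g)=\min_{a\in A}H(t,x,a,p,g)$, a contradiction. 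This establishes (C).

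The only point requiring care is the passage from "affine $\lambda$" to "affine $\Lambda$": because the weights $g(y)-g(x)$ can have either sign, mere convexity of $\lambda$ in $a$ would not survive the weighted sum, which is exactly why affine (i.e.\ convex and concave) dependence of $\lambda$ on $a$ is the right hypothesis rather than plain convexity. Once this is settled, the rest is the routine fact that a strictly convex function on a compact convex set has a unique minimizer.
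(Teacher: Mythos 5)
Your proof is correct and follows essentially the same route as the paper's: apply the product structure of $\nu$ (formula (\ref{intphi})) to reduce the generator to $\sum_{y}\lambda(t,x,y,a,p)[g(y)-g(x)]$, observe that this is affine in $a$ when $\lambda$ is, and conclude that $H$ is strictly convex in $a$ and hence has a unique minimizer on the compact convex set $A$. Your additional remarks — spelling out why affineness (rather than mere convexity) of $\lambda$ is needed because the weights $g(y)-g(x)$ may change sign, and making the existence/uniqueness step on $A$ explicit via continuity and strict convexity — are consistent with the paper, which leaves these points implicit.
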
 

\begin{proof}
We have $H(t,x,a,p,g) = \Lambda^{a,p}_t g(x) + c(t,x,a,p)$ where
\[
	\Lambda^{a,p}_t g(x) = \int_U \left[g\left(x + \sum_{y\in \Sigma} (y-x) \mathbbm{1}_{]0, \lambda(t,x,y,a,p)[} (u_y)\right) -g(x)\right]\nu(du).
\]
Applying formula (\ref{intphi}) we obtain
\begin{align*}
	\Lambda^{a,p}_t g(x) &= \sum_{y=1}^d \int_{U_y} \left[g(x +  (y-x) \mathbbm{1}_{]0, \lambda(t,x,y,a,p)[} (u_y)) -g(x)\right] du_y \\
	&= \sum_{y=1}^d \lambda(t,x,y,a,p) \left[g(y)- g(x)\right],
\end{align*}
which is an affine function of $a$ if $\lambda$ is affine in $a$. Therefore, $H$ is a strictly convex function of $a$ if $c$ is strictly convex, and thus it has a unique minimum in $A$.
\end{proof}

\section{Relaxed Mean Field Game Solutions}
\label{SectExistence}

\subsection{The space $\mathcal{L}$}

In order to prove the existence of solutions we use a fixed point theorem. First of all, we want to find a suitable space where all the flows of probability measures lie. Set $K:=2\nu(U)\sqrt{d}$ and denote by
\begin{equation}
\mathcal{L} := \left\{m:[0,T]\longrightarrow S: |m(t)-m(s)| \leq K|t-s| , \quad m(0) = m_0 \right\}
\label{L}
\end{equation}
the space of Lipschitz continuous flows of probability measures, with the same Lipschitz constant $K$
and initial point $m_0$. 
This space is easily seen to be convex and compact with respect to the uniform norm, thanks to the Ascoli-Arzel{\`a} theorem. The following lemma allows to restrict attention to flows of probability measures in $\mathcal{L}$. 

\begin{lemma}
Let $\alpha\in\mathcal{A}$, or $\rho\in\mathcal{R}$, and let $m:[0,T]\longrightarrow S$ be any deterministic flow of probability measures. Then the flow of the solution process $Flow(X_{\alpha,m})$, or $Flow(X_{\rho,m})$, is in $\mathcal{L}$.
\label{Llem}
\end{lemma}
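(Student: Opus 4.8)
The plan is to use the Dynkin formula to control the increments of $t \mapsto Law(X(t))$. Let me work with the relaxed case $X = X_{\rho,m}$, since an ordinary control $\alpha$ induces a relaxed control and the ordinary case follows as a special instance. First, I would note that identifying functions $g:\Sigma\to\mathbb{R}$ with vectors in $\mathbb{R}^d$, we have $E[g(X(t))] = \langle g, Flow(X).t\rangle$ where $\langle\cdot,\cdot\rangle$ is the Euclidean inner product (viewing $Law(X(t))$ as the probability vector $p(t) \in S$). So to estimate $|p(t) - p(s)|$ it suffices to bound $|\langle g, p(t) - p(s)\rangle|$ for all unit vectors $g$.

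Next I would apply the Dynkin formula \eqref{dynrel}, but with the initial time shifted from $0$ to $s$ (the same martingale argument gives, for $s \le t$,
\[
E[g(X(t))] - E[g(X(s))] = E\int_s^t\int_U\int_A \left[g(X(r) + f(r,X(r),u,a,m(r))) - g(X(r))\right]\nu(du)\rho_r(da)dr).
\]
Since $g$ is identified with a vector in $\mathbb{R}^d$, I would bound the integrand pointwise by
\[
\left|g\bigl(X(r) + f(r,X(r),u,a,m(r))\bigr) - g(X(r))\right| \le 2|g|\sqrt{d},
\]
using that both $g(X(r)+f(\cdots))$ and $g(X(r))$ are single coordinates of the vector $g$, hence bounded in absolute value by $\|g\|_\infty \le |g|$; a slightly cleaner bound is $|g(y) - g(x)| \le |g(y)| + |g(x)|$ and then Cauchy–Schwarz, but in any case one gets a constant times $|g|$. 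Combining this with $\int_U \nu(du) = \nu(U)$ and $\int_A \rho_r(da) = 1$ yields
\[
\left|E[g(X(t))] - E[g(X(s))]\right| \le 2\nu(U)\sqrt{d}\,|g|\,|t-s| = K|g|\,|t-s|.
\]
Taking the supremum over $g$ with $|g| = 1$ gives $|Flow(X).t - Flow(X).s| \le K|t-s|$. Finally, $Flow(X).0 = Law(\xi) = m_0$ by the definition of an open-loop/relaxed control, so $Flow(X) \in \mathcal{L}$ as defined in \eqref{L}.

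The only mildly delicate point is getting the sharp constant $K = 2\nu(U)\sqrt{d}$ rather than a worse one: one must be careful that the pointwise bound on $|g(y)-g(x)|$ contributes the factor $2$ and that passing from bounds on $\langle g, \cdot\rangle$ for all unit $g$ to the Euclidean norm of $p(t)-p(s)$ introduces no further loss (it does not, by self-duality of $|\cdot|$). Everything else is a routine application of Dynkin's formula together with the trivial bounds $\int_U\nu(du) = \nu(U)$, $\int_A\rho_r(da) = 1$, and $\|g\|_\infty \le |g|$ for $g \in \mathbb{R}^d$.
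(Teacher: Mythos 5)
Your proposal is correct and follows essentially the same route as the paper: Dynkin's formula applied on $[s,t]$, the crude bound $|g(y)-g(x)|\le 2\|g\|_\infty$ on the generator term, and $\int_U\nu(du)=\nu(U)$, $\int_A\rho_r(da)=1$. The only difference is in the last step and is cosmetic: the paper takes $g=e_x$ and sums the $d$ coordinate bounds in quadrature to arrive at $K=2\nu(U)\sqrt{d}$, whereas your self-duality argument over unit vectors $g$ actually yields the sharper constant $2\nu(U)$ (the factor $\sqrt{d}$ you insert is an unneeded over-estimate, harmless since a smaller Lipschitz constant still places the flow in $\mathcal{L}$).
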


\begin{proof}
We prove the claim for relaxed controls, so the conclusion follows also when considering the subset of ordinary controls. Let $g:\Sigma\longrightarrow\mathbb{R}$ be a function, which is then Lipschitz and bounded and can be viewed as 
a vector in $\mathbb{R}^d$. Denote $|g|_{\infty} := \max_{x\in\Sigma} g(x)$. Let $\rho\in\mathcal{R}$ and $m$ be fixed, and set 
$X=X_{\rho,m}$. The function $m:[0,T]\rightarrow S$ has a priori no regularity, except for being measurable.
By the Dynkin formula (\ref{dynrel}) we have, for any $0\leq s\leq t\leq T$,
\begin{align*}
	&E[g(X(t))] - E[g(X(s))]\\
	&= \int_s^t \int_U \int_A E [g(X(r) + f(r,X(r), u, a, m(r))) -g(X(r))] \rho_r(da)\nu(du)dr.
\end{align*}
Hence
\begin{align*}
	&|E[g(X(t))] - E[g(X(s))]|\\
	&\leq \int_s^t \int_U \int_AE |g(X(r) + f(r,X(r), u, a, m(r))) -g(X(r))| \rho_r(da)\nu(du)dr\\
	&\leq  \int_s^t \int_U \int_A  2|g|_{\infty} \rho_r(da)\nu(du)dr = 2\nu(U) |g|_{\infty} (t-s)
\end{align*}
thanks to the fact that $\rho_r$ is a probability measure on $A$ for any $r$.
Clearly, $E[g(X(t))]= g \cdot Law(X(t))$. Thus, for any $t$ and $s$,
\begin{align*}
	|Law(X(t))- Law(X(s))| 
	&= \sqrt{\sum_{x=1}^d  \left[e_x \cdot (Law(X(t))- Law(X(s)))\right]^2}\\
	&\leq \sqrt{\sum_{x=1}^d  |t-s|^2 4 |e_x|_{\infty}^2 \nu(U)^2 } = 2 \nu(U)\sqrt{d} |t-s|,
\end{align*}
which gives the claim. 
\end{proof}

\subsection{Existence of relaxed mean field game solutions}

\subsubsection{Tightness and continuity for $m$ fixed}

Consider a sequence of random variables
\begin{equation}
(X^n, \rho^n, \N_{\rho^n})
\label{seq}
\end{equation}
where $\rho^n$ is a relaxed control, $\N_{\rho^n}$ is the related relaxed Poisson measure and $X^n = X_{\rho^n, m}$, $m\in\mathcal{L}$ is fixed.
The state space of these random variables is $D([0,T],\Sigma)\times \mathcal{D} \times \mathcal{M}$, where 
$\mathcal{M} =\mathcal{M}([0,T]\times U \times A)$ denotes the set of finite positive measures on $[0,T]\times U \times A$ endowed with the topology of weak convergence. 

The following is of fundamental importance, and is similar to Theorem 13.2.1 in \citet[p.\,363]{kushnerdupuis01}.

\begin{theorem}
Assume (A) and (B). Then
\begin{enumerate}
	\item any sequence of the form (\ref{seq}) is tight;
	\item the limit in distribution $(X,\rho, \widetilde{\N})$ of any converging subsequence is such that $\widetilde{\N}$ is the relaxed Poisson measure related to the relaxed control $\rho$ and $X = X_{\rho,m}$ in distribution;
	\item $J(\rho,m)$ is continuous in $\rho$.
\end{enumerate}
\label{tight}
\end{theorem}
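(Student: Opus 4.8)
\textit{Overall strategy.} I would prove the three assertions in order, with the tightness of~(1) and the limit identification of~(2) carrying the weight, and~(3) following by a soft argument from them.

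\textit{Part (1): tightness.} The family $\{\rho^n\}$ is tight for free since $\mathcal{D}$ is compact. For $\{\mathcal{N}_{\rho^n}\}$: because $[0,T]\times U\times A$ is compact, a subset of $\mathcal{M}$ is relatively compact for the weak topology as soon as the total masses are uniformly bounded; taking $U_0=U$, $A_0=A$ in \eqref{mart} shows that $t\mapsto\mathcal{N}_{\rho^n}(t,U,A)$ has the deterministic compensator $\nu(U)\,t$, hence is a Poisson process of rate $\nu(U)$, so $\mathcal{N}_{\rho^n}([0,T]\times U\times A)$ is $\mathrm{Poisson}(\nu(U)T)$-distributed for every $n$, which yields a uniform tail bound. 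For $\{X^n\}$ in $D([0,T],\Sigma)$, compact containment is automatic since $\Sigma$ is finite, so by Aldous' criterion it is enough to bound $P[X^n((\tau+\delta)\wedge T)\neq X^n(\tau)]$ uniformly over stopping times $\tau$; every jump of $X^n$ occurs at an atom of $\mathcal{N}_{\rho^n}$, so this probability is at most the probability that the rate-$\nu(U)$ Poisson process above has a point in $(\tau,\tau+\delta]$, i.e.\ at most $\nu(U)\delta$, which tends to $0$ with $\delta$ uniformly in $n$ and $\tau$. Tightness of the three marginals gives tightness of the triples.

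\textit{Part (2): identification.} I would pass to a subsequential limit $(X,\rho,\widetilde{\mathcal{N}})$ and, via the Skorokhod representation theorem, assume the convergence is almost sure on one probability space, working thereafter with the filtration generated by the limit triple. To identify $\widetilde{\mathcal{N}}$: for continuous $\varphi$ on $U\times A$, the maps $\mu\mapsto\int_{[0,t]\times U\times A}\varphi\,d\mu$ and $\rho\mapsto\int_{[0,t]\times A}\bigl(\int_U\varphi\,\nu(du)\bigr)d\rho$ are continuous at the limit (the indicator of $[0,t]$ in time is harmless because $\widetilde{\mathcal{N}}$ has at most countably many time-atoms and $\rho([0,t]\times A)=t$), so, using the uniform integrability afforded by the $\mathrm{Poisson}(\nu(U)T)$ bound, the martingale identity in the form \eqref{prmar} — and its polarized, orthogonality version — pass to the limit, first for a dense set of times and then for all times by right-continuity; combined with the fact that a weak limit of counting measures with uniformly bounded mass is again a counting measure, this identifies $\widetilde{\mathcal{N}}$ as the relaxed Poisson measure of $\rho$ via the characterization of Appendix~A. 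To identify $X$: since $m\in\mathcal{L}$ is Lipschitz continuous and $g\in\mathbb{R}^d$ is bounded and Lipschitz, assumption~(A) makes $(s,a)\mapsto\Lambda^{a,m(s)}_s g(x)$ bounded and continuous for each $x\in\Sigma$; using $X^n(s)\to X(s)$ for a.e.\ $s$ (the jump set of $X$ being Lebesgue-null), $\rho^n\to\rho$, and dominated convergence, the functional defining $M^{X^n}_g$ in \eqref{marg} converges to $M^X_g$ at continuity points of $X$, and passing \eqref{prmar} to the limit shows $M^X_g$ is a martingale for every $g$. By Lemma~\ref{conmar} this means $X$ solves \eqref{mfgr}, i.e.\ $X=X_{\rho,m}$ in distribution, a statement that is unambiguous by the pathwise (hence distributional) uniqueness of Lemma~\ref{lemmaexi}.

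\textit{Part (3): continuity of $J(\cdot,m)$, and the main obstacle.} I read continuity in $\rho$ as continuity with respect to convergence in distribution of the $\mathcal{D}$-valued control, on which $J(\rho,m)$ depends only through $Law(\rho)$. If $\rho^n\to\rho$ in distribution, parts (1) and (2) show that $(X^n,\rho^n)$ converges in distribution to $(X_{\rho,m},\rho)$ (all subsequential limits have the same law, since $Law(X_{\rho,m})$ is determined by $Law(\rho)$ through pathwise uniqueness). Writing $J(\rho^n,m)=E[\Phi(X^n,\rho^n)]$ with $\Phi(x,r):=\int_{[0,T]\times A}c(s,x(s),a,m(s))\,r(ds,da)+\psi(x(T),m(T))$, assumption~(B) makes $\Phi$ bounded and — because $X_{\rho,m}$ has a Lebesgue-null jump set and almost surely does not jump at $T$ — almost surely continuous for the limit law, whence $J(\rho^n,m)\to J(\rho,m)$. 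The main obstacle throughout is the limit passage in part~(2): the compensator in \eqref{marg} couples the path $X^n$ with the control $\rho^n$, so the pertinent functional is only jointly, not separately, continuous, and one must simultaneously absorb the genuine Skorokhod-topology discontinuities of $x\mapsto g(x(s))$; both are handled precisely because $X$ has only finitely many jumps while $\rho$ is absolutely continuous in time, so the exceptional set of times is negligible for both components. Checking that the counting-measure structure persists under the weak limit is the other point requiring a short separate argument.
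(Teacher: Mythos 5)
Your proposal is correct and follows essentially the same route as the paper: compactness of $\mathcal{D}$ for the controls, a uniform mass bound from the martingale property \eqref{mart} for the relaxed Poisson measures, Aldous's criterion for the state processes, identification of the limit via the controlled martingale problem of Lemma~\ref{conmar} together with the integer-valuedness of the limiting random measure, and boundedness plus continuity of $c$ and $\psi$ for part (3). The only differences are ones of detail (you prove the identification $X=X_{\rho,m}$ in place, where the paper defers it to the proof of Theorem~\ref{teo}, and you bound the jump probability for Aldous where the paper bounds a conditional second moment), none of which changes the argument.
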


\begin{proof}
(1) The sequence of relaxed controls is tight as $\mathcal{D}$ is compact. For any $\epsilon>0$, the set
\[
	K_\epsilon := \left\{\Theta\in\mathcal{M} : \Theta([0,T]\times U \times A)\leq \frac{T \nu(U)}{\epsilon}  \right\}
\]
is compact in $\mathcal{M}$, since $[0,T]\times U \times A$ is compact. From (\ref{relcon}) and the martingale property (\ref{mart}), it follows that $N_{\rho^n}(t,U,A) - t\nu(U)$ is a martingale for any $n$ and so $E[\N_{\rho^n}(T,U,A)]= T\nu(U)$. Therefore, by Chebychev's inequality,
\[
	 P\left(\N_{\rho^n} \notin K_\epsilon \right) 
	= P \left(\N_{\rho^n}(T,U,A)> \frac{T \nu(U)}{\epsilon} \right) \leq E[\N_{\rho^n}(T,U,A)] \cdot \frac{\epsilon}{T \nu(U)} = \epsilon
\]
for any $n$, saying that the sequence of relaxed Poisson measures is tight. The properties of the stochastic integral give
\[
	E\left[\left.|X^n(\tau+h)- X^n(\tau)|^2\right| \mathcal{F}_\tau\right] =O(h)
\]
for any $\mathbb{F}$-stopping time $\tau$, uniformly in $n$, which yields the tightness of the processes in $D([0,T],\Sigma)$ by Aldous's criterion \citep{aldous78}.
	
(2) By abuse of notations, denote by $(X^n, \rho^n, \N_{\rho^n})$ the subsequence which converges in distribution to $(X, \rho, \widetilde{\N})$. From the martingale property (\ref{mart}), it follows that $\widetilde{\N}(t,U_0,A_0)- \nu(U_0)\rho(t,A_0)$ is a martingale for any Borel sets $A_0\subset A$ and $U_0\subset U$, where the limiting measure is defined on the canonical space and the filtration is the canonical filtration (both defined in Appendix A). 
The limit random measure $\widetilde{\N}$ is integer valued \citep[Theorem 15.7.4 in][]{kallenberg86}, so the uniqueness property says that $\widetilde{\N} = \N_\rho$ in distribution.
The claim $X=X_{\rho,m}$ in distribution will be shown also in the proof of Theorem \ref{teo}, where $m$ is not fixed, using the controlled martingale problem, so we do not repeat the argument here.
	
(3) $\lim_{n\rightarrow \infty} J(\rho_n,m) = J(\rho,m)$ since $c$ and $\psi$ are bounded and continuous by assumption (B).
\end{proof}

By the chattering lemma, which we will present later as Lemma~\ref{chat}, we have
\[
	\min_{\rho\in\mathcal{R}}J(\rho, m) =\inf_{\alpha\in\mathcal{A}} J(\alpha,m).
\]
The minimum on the left hand side exists by the above Theorem~\ref{tight}. The infimum on the right hand side is actually a minimum, too; see Theorem~\ref{exi} below, where the existence of optimal feedback controls will be shown. However, there might exist more optima among relaxed open-loop controls than among ordinary feedback controls.

\subsubsection{Fixed point argument}

Let $2^\mathcal{L}$ be the set of subsets of $\mathcal{L}$ and define the point-to-set map 
$\Phi: \mathcal{L}\longrightarrow 2 ^\mathcal{L}$
 by 
\begin{equation}
	\Phi(m):= \left\{Flow(X_{\rho,m}): J(\rho,m) \leq J(\sigma,m) \quad \forall\sigma\in \mathcal{R}\right\},\quad m\in\mathcal{L}.
\label{phi}
\end{equation}
A flow  $m\in\mathcal{L}$ is called a \emph{fixed point} of this point-to-set map if $m\in \Phi(m)$. We need this map since the optimal control is not necessarily unique. 

By construction, $\Phi$ has a fixed point if and only if there exists a relaxed solution to the mean field game, in the sense of Definition \ref{solrel}. In order to prove the existence of a fixed point, we are going to apply Theorem~1 in \citet{fan52}, which requires the following definition.

\begin{definition}
Let $\mathcal{L}$ be a metric space. A map $\Phi: \mathcal{L}\longrightarrow 2 ^\mathcal{L}$  is said to have \emph{closed graph} if 
$m_n\in \mathcal{L}$, $y_n\in\mathcal{L}$, $y_n \in \Phi(m_n)$ for any $n\in \mathbb{N}$ and $m_n\rightarrow m$, $y_n\rightarrow y$ in 
$\mathcal{L}$ implies $y\in \Phi(m)$.
\label{cg}
\end{definition}

\begin{proposition}[Ky Fan]
Let $\mathcal{L}$ be a non empty, compact and convex subset of a locally convex metric topological vector space. 
Let $\Phi: \mathcal{L}\longrightarrow 2 ^\mathcal{L}$ have closed graph and assume that $\Phi(m)$ is non empty and convex for any $m\in\mathcal{L}$.
Then the set of fixed points of $\Phi$ is non empty and compact.
\label{fan}
\end{proposition}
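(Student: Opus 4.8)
The plan is to prove the two assertions separately, the compactness of the fixed-point set being routine and its non-emptiness being the substance. Write $F:=\{m\in\mathcal{L}:m\in\Phi(m)\}$. Since $\mathcal{L}$ is compact, to see that $F$ is compact it suffices to check that $F$ is closed, and this is immediate from the closed-graph hypothesis: if $m_n\in F$ and $m_n\to m$ in $\mathcal{L}$, then applying Definition~\ref{cg} with $y_n:=m_n\in\Phi(m_n)$ (so $y_n\to m$) gives $m\in\Phi(m)$, i.e.\ $m\in F$. Thus the whole statement reduces to producing one fixed point.

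For non-emptiness I would first observe that, $\mathcal{L}$ being compact, the closed-graph property of $\Phi$ is equivalent to $\Phi$ being upper hemicontinuous with closed values; since moreover each $\Phi(m)$ is a convex subset of the compact set $\mathcal{L}$, every $\Phi(m)$ is in fact non-empty, compact and convex. This places us exactly in the hypotheses of the Kakutani--Fan--Glicksberg fixed point theorem, so one option is simply to invoke it (equivalently, to cite \citet{fan52} directly); if instead one wants a self-contained argument, the classical finite-dimensional approximation does it. Fix a translation-invariant metric $d$ inducing the topology. For each $n$, cover the compact set $\mathcal{L}$ by finitely many convex open sets $V^n_1,\dots,V^n_{k_n}$ of $d$-diameter less than $1/n$, choose a continuous partition of unity $(\phi^n_i)_i$ subordinate to this cover, points $m^n_i\in V^n_i$, and selections $y^n_i\in\Phi(m^n_i)\subset\mathcal{L}$. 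Then $C_n:=\mathrm{conv}\{y^n_1,\dots,y^n_{k_n}\}$ is compact and convex inside a finite-dimensional affine subspace, and $g_n(m):=\sum_i\phi^n_i(m)\,y^n_i$ is a continuous self-map of $C_n$, so Brouwer's theorem yields $\bar m_n=g_n(\bar m_n)\in C_n\subset\mathcal{L}$.

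It then remains to let $n\to\infty$, and this limiting step is the main obstacle. By compactness of $\mathcal{L}$ pass to a subsequence with $\bar m_n\to m^\ast$. Given any open convex set $W$ containing the compact set $\Phi(m^\ast)$, upper hemicontinuity provides a neighbourhood of $m^\ast$ whose image under $\Phi$ lies in $W$; since the diameters of the $V^n_i$ tend to $0$ and $\bar m_n\to m^\ast$, for all large $n$ every index $i$ with $\phi^n_i(\bar m_n)>0$ has $m^n_i$ in that neighbourhood, hence $y^n_i\in\Phi(m^n_i)\subset W$, and then convexity of $W$ forces $\bar m_n=\sum_i\phi^n_i(\bar m_n)y^n_i\in W$; letting $n\to\infty$ gives $m^\ast\in\overline{W}$. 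Because $\Phi(m^\ast)$ is a compact convex subset of a locally convex (hence Hausdorff) space, it equals the intersection of the closed convex neighbourhoods containing it, so $m^\ast\in\Phi(m^\ast)$ and $F\neq\emptyset$. The delicate point throughout is controlling the ``displaced'' selections $y^n_i\in\Phi(m^n_i)$ --- which sit over $m^n_i$ rather than over $\bar m_n$ --- by playing the shrinking of the cover against the upper hemicontinuity extracted from the closed graph; convexity of the values and local convexity of the ambient space are both used essentially here.
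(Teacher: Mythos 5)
Your argument is correct, but it is worth pointing out that the paper does not prove this proposition at all: it is quoted as Theorem~1 of \citet{fan52} and used as a black box. What you supply is a complete, self-contained proof of the Kakutani--Fan--Glicksberg theorem in the metrizable locally convex setting, and every step checks out: compactness of the fixed-point set follows from closedness via the closed-graph property; closed graph plus compactness of the codomain does yield upper hemicontinuity with closed (hence compact) values; convex open sets of arbitrarily small metric diameter exist because local convexity lets you place a convex neighbourhood inside any small ball; the partition-of-unity map $g_n$ is a continuous self-map of the finite-dimensional compact convex set $C_n\subset\mathcal{L}$, so Brouwer applies; and the limiting step correctly plays the shrinking covers against upper hemicontinuity to get $m^\ast\in\overline{W}$ for every open convex $W\supset\Phi(m^\ast)$, after which Hahn--Banach separation (valid since a metric TVS is Hausdorff) closes the argument. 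The only cosmetic imprecisions are that the points $m^n_i$ should be chosen in $V^n_i\cap\mathcal{L}$ so that $\Phi(m^n_i)$ is defined (discarding any $V^n_i$ disjoint from $\mathcal{L}$), and that the final separation step is more transparently phrased as: for each $x\notin\Phi(m^\ast)$ there is an open convex $W\supset\Phi(m^\ast)$ with $x\notin\overline{W}$. Compared with the paper's bare citation, your route buys self-containedness and makes visible exactly where each hypothesis enters --- metrizability for the sequential arguments, compactness for upper hemicontinuity, convexity of the values and local convexity of the ambient space for the approximation and separation steps --- at the cost of a page of standard material the authors reasonably chose to outsource.
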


By means of this proposition we are now able to state and prove the following main theorem concerning existence of relaxed solutions, while uniqueness is not guaranteed. 

\begin{theorem}
Under assumptions (A) and (B) there exists at least one relaxed solution of the mean field game (\ref{mfgr}).
\label{teo}
\end{theorem}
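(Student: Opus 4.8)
The plan is to deduce the existence of a fixed point of the point-to-set map $\Phi$ of (\ref{phi}) from the Ky Fan theorem, Proposition~\ref{fan}, and to note that any fixed point is automatically the measure component of a relaxed solution: $m\in\Phi(m)$ means there is $\rho\in\mathcal{R}$, optimal against $m$, with $Flow(X_{\rho,m})=m$, which together with $X=X_{\rho,m}$ is exactly a triple as in Definition~\ref{solrel}. The space $\mathcal{L}$ is a non-empty, compact and convex subset of $\mathcal{C}([0,T],S)$ with the uniform norm, hence of a locally convex metric topological vector space, and $\Phi(m)\subseteq\mathcal{L}$ for every $m$ by Lemma~\ref{Llem}. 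So it remains to verify the hypotheses of Proposition~\ref{fan}: for each $m\in\mathcal{L}$ the set $\Phi(m)$ is non-empty and convex, and $\Phi$ has closed graph.

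Non-emptiness is immediate from Theorem~\ref{tight}: fixing $m$ and taking a minimizing sequence $\rho^n$ for $J(\cdot,m)$ over $\mathcal{R}$, part~(1) gives tightness of $(X_{\rho^n,m},\rho^n,\mathcal{N}_{\rho^n})$, part~(2) identifies any weak limit as $(X_{\rho,m},\rho,\mathcal{N}_\rho)$, and part~(3) gives $J(\rho,m)=\lim_n J(\rho^n,m)=\inf_{\mathcal{R}}J(\cdot,m)$, so $\rho$ is optimal and $Flow(X_{\rho,m})\in\Phi(m)$. For convexity, let $\rho^1,\rho^2\in\mathcal{R}$ be optimal against $m$, carried by data $\big((\Omega^i,\mathcal{F}^i,P^i;\mathbb{F}^i),\xi^i,\mathcal{N}^i,\mathcal{N}_{\rho^i},X^i=X_{\rho^i,m}\big)$, and set $m^i:=Flow(X_{\rho^i,m})$; let $\lambda\in(0,1)$. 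I would mix the two by a biased coin flip: on the disjoint union $\Omega:=(\{1\}\times\Omega^1)\cup(\{2\}\times\Omega^2)$ put $P(\{1\}\times\cdot):=\lambda P^1$, $P(\{2\}\times\cdot):=(1-\lambda)P^2$, take for $\mathcal{F}_t$ the sets $(\{1\}\times F^1)\cup(\{2\}\times F^2)$ with $F^i\in\mathcal{F}^i_t$, and let $\rho,\xi,\mathcal{N},\mathcal{N}_\rho,X$ restrict to $\rho^i,\xi^i,\mathcal{N}^i,\mathcal{N}_{\rho^i},X^i$ on the $i$-th block. All the required structure survives: since the two intensity measures coincide, for $B\subseteq\,]s,t]\times U$ the variable $\mathcal{N}(B)$ is, under $P$, Poisson with the common parameter and independent of $\mathcal{F}_s$ — the coin-flip atom in $\mathcal{F}_0$ is irrelevant precisely because the two Poisson laws agree — so $\mathcal{N}$ is an $\mathbb{F}$-Poisson random measure with intensity $\nu$; a process that is an $\mathbb{F}$-martingale under both $P^1$ and $P^2$ is one under $P$, so the martingale property (\ref{mart}) and the orthogonality relations characterizing $\mathcal{N}_\rho$ hold under $P$; $X$ solves (\ref{mfgr}) $\omega$ by $\omega$ because it does on each block; and both $Flow(\cdot)$ and $J(\cdot,m)$ are affine in $P$, whence $J(\rho,m)=\lambda J(\rho^1,m)+(1-\lambda)J(\rho^2,m)=\inf_{\mathcal{R}}J(\cdot,m)$ and $Flow(X_{\rho,m})=\lambda m^1+(1-\lambda)m^2$. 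Thus $\lambda m^1+(1-\lambda)m^2\in\Phi(m)$.

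For the closed graph, suppose $m_n\to m$ and $y_n\to y$ in $\mathcal{L}$ with $y_n\in\Phi(m_n)$, and pick $\rho^n\in\mathcal{R}$ optimal against $m_n$ with $y_n=Flow(X_{\rho^n,m_n})$. The bounds behind Theorem~\ref{tight}(1) use only the (uniform) boundedness of $f$ and the finite mass $\nu(U)$, not the particular flow, so $(X_{\rho^n,m_n},\rho^n,\mathcal{N}_{\rho^n})$ is tight; passing to a subsequence it converges in law to some $(X,\rho,\widetilde{\mathcal{N}})$ with $\widetilde{\mathcal{N}}=\mathcal{N}_\rho$ as in Theorem~\ref{tight}(2). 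I would then pass to the limit in the controlled martingale problem of Lemma~\ref{conmar}: assumption~(A) makes $(s,a)\mapsto\int_U[g(x+f(s,x,u,a,q))-g(x)]\,\nu(du)$ bounded and continuous, uniformly in $q$ — recall $g$ is Lipschitz and, $f$ being integer-valued, $|g(x+f_1)-g(x+f_2)|\le|g|_\infty|f_1-f_2|$ — and $m_n\to m$ uniformly, so the compensators in (\ref{marg}) for $(\rho^n,m_n)$ converge to the one for $(\rho,m)$; evaluating the identities (\ref{prmar}) at times that are a.s.\ continuity points of $X$ (the endpoint $T$ being automatically one, since $J_1$-convergence preserves endpoints) removes the usual Skorokhod obstruction, and we conclude $X=X_{\rho,m}$ in distribution. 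Since moreover $X^n\to X$ in law forces $Flow(X^n)\to Flow(X)$ in $\mathcal{L}$ (convergence of marginals at continuity points plus equicontinuity), we get $y=\lim_n y_n=Flow(X)=Flow(X_{\rho,m})$. Finally, $\rho$ is optimal against $m$: for every $\sigma\in\mathcal{R}$ and every $n$ we have $J(\rho^n,m_n)\le J(\sigma,m_n)$, and as $n\to\infty$ the left side tends to $J(\rho,m)$ while the right side tends to $J(\sigma,m)$ — here one uses the boundedness and continuity of $c,\psi$ from (B), the uniform convergence $m_n\to m$, the finiteness of $\Sigma$ (so that $x\mapsto c(s,x,a,p)$ and $x\mapsto\psi(x,p)$ are automatically continuous), dominated convergence over the time variable for the running cost, the $J_1$-endpoint property for the terminal cost, and, for the right side, the continuity of $m'\mapsto Law(X_{\sigma,m'})$, itself obtained from the same tightness bounds together with the pathwise uniqueness of Lemma~\ref{lemmaexi}. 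Hence $J(\rho,m)\le J(\sigma,m)$ for all $\sigma$, i.e.\ $y=Flow(X_{\rho,m})\in\Phi(m)$, so $\Phi$ has closed graph. Proposition~\ref{fan} then produces a non-empty (compact) set of fixed points, each of which yields a relaxed mean field game solution.

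I expect the closed-graph property to be the main obstacle: the limit passage in the martingale problem and the continuity statements for $J(\rho^n,m_n)$ and for $J(\sigma,\cdot)$ all require care with the joint weak convergence of controls, states and relaxed Poisson measures and with the bookkeeping forced by the Skorokhod $J_1$-topology. By comparison, non-emptiness is a direct corollary of Theorem~\ref{tight}, and convexity is essentially routine once one has checked that the coin-flip construction genuinely lands back in $\mathcal{R}$, i.e.\ that $\mathcal{N}$ is still a Poisson random measure and $\mathcal{N}_\rho$ still its associated relaxed Poisson measure with respect to the enlarged filtration.
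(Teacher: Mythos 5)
Your proposal is correct and follows essentially the same route as the paper: Ky Fan's theorem applied to $\Phi$, non-emptiness from Theorem~\ref{tight}, convexity via a coin-flip mixture of two optimal controls (you realize it as a disjoint union of the two probability spaces, the paper via an independent $\mathcal{F}_0$-measurable Bernoulli variable on a common space --- the same idea), and the closed-graph property via tightness and passage to the limit in the controlled martingale problem of Lemma~\ref{conmar}. The extra care you take in checking that the mixed noise is still a Poisson random measure and that $J(\sigma,m_n)\to J(\sigma,m)$ for fixed $\sigma$ only makes explicit points the paper leaves implicit.
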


\begin{proof}
We want to show the existence of a fixed point for the map $\Phi: \mathcal{L}\longrightarrow 2 ^\mathcal{L}$ defined in (\ref{phi}), applying Proposition \ref{fan}. Recall that any element of $\Phi(m)$ is in $\mathcal{L}$ by Lemma~\ref{Llem}, and the set $\mathcal{L}$ defined in (\ref{L})  is a compact and convex subset of 
$\mathcal{C}([0,T],S)$ endowed with the uniform norm. By Theorem \ref{tight}, $\Phi(m)$ is non empty for any $m$. It remains to prove that $\Phi(m)$ is convex and $\Phi$ has closed graph.

\textbf{$\Phi(m)$ is convex}. 
Let $m$ be fixed and let $\rho_1,\rho_2 \in\mathcal{R}$ be such that $Flow(X_{\rho_1,m})$ and $Flow(X_{\rho_2,m})$ 
belong to $\Phi(m)$, i.e. $\rho_1$ and $\rho_2$ are optimal controls for $m$, and take $\theta\in [0,1]$. Let $\zeta$ be a Bernoulli random variable with parameter $\theta$, $\mathcal{F}_0$ measurable and independent of $\rho_1$ and $\rho_2$. Define $\rho_3\in\mathcal{R}$ by
\[
	\rho_3([0,t[ \times E):= \rho_1 ([0,t[ \times E) \mathbbm{1}_{\{\zeta=1\}} + \rho_2([0,t[ \times E)\mathbbm{1}_{\{\zeta=0\}}
\]
for any $E \in\mathcal{B}(A)$ and $t\in[0,T]$. We have 
\begin{align*}
	E[G(X_{\rho_3,m})] &= E\left[\left. G(X_{\rho_3,m})\right|\zeta=1 \right] P(\zeta=1) + E\left[\left. G(X_{\rho_3,m})\right| \zeta=0\right]P(\zeta=0) \\
	&= \theta E[G(X_{\rho_1,m})] + (1-\theta) E[G(X_{\rho_2,m})]
\end{align*}
for every $G\in \mathcal{C}_b (D([0,T],\Sigma),\mathbb{R})$. This implies that 
\begin{equation}
	Law(X_{\rho_3,m})=\theta Law(X_{\rho_1,m}) +(1-\theta)Law(X_{\rho_2,m})
\label{the}
\end{equation}
and then in particular
\begin{equation}
Flow (X_{\rho_3,m}) = \theta Flow (X_{\rho_1,m}) + (1-\theta) Flow(X_{\rho_2,m}).
\label{theta}
\end{equation}
Since $\rho_1$ and $\rho_2$ are optimal for $m$ we have, thanks to (\ref{the}),
\begin{align*}
	J(\rho_3,m) &= J(\rho_1,m) P(\zeta=1) + J(\rho_2,m) P (\zeta =0)\\
	&\leq \theta J(\sigma,m) + (1-\theta)J(\sigma,m) = J(\sigma,m)
\end{align*}
for any $\sigma \in\mathcal{R}$, which means that also $\rho_3$ is optimal for $m$ and hence (\ref{theta}) says that $\Phi(m)$ is convex.

\textbf{$\Phi$ has closed graph}.
Let $m_n, y_n,m,y\in \mathcal{L}$ be such that $m_n\rightarrow m$, $y_n \rightarrow y$ in $\L$ and $y_n \in \Phi(m_n)$ for every $n\in \mathbb{N}$. We have to prove that $y\in \Phi(m)$. Let $\rho_n\in\mathcal{R}$ be optimal for $m_n$ and such that $y_n = Flow (X_{\rho_n,m_n})$. 
Set $X_n:= X_{\rho_n,m_n}$ and let $\N_n:= \N_{\rho_{n}}$ be the relaxed Poisson measure related to $\rho_n$.

The tightness of the sequence $(X_n,\rho_n, \N_n)$ is proved as in Theorem \ref{tight}. Let $(X_{n_k},\rho_{n_k}, \N_{n_k})$ be a subsequence which converges in distribution to $(X,\rho, \widetilde{\N})$. We have $\widetilde{\N}=\N_\rho$ in distribution, 
i.e. it is the relaxed Poisson measure related to $\rho$. In order to prove that $X=X_{\rho,m}$ in distribution, we use the controlled martingale problem formulation stated in Lemma \ref{conmar}, and hence let us assume that the processes are defined in the canonical space.

Property (\ref{prmar}) holds for $X_{n_k}$, $\rho_{n_k}$ and $m_{n_k}$, any $k\in \mathbb{N}$. Let $M^{n_k}_g$ denote the process defined by
\begin{equation*}
\begin{split}
&M_g^{n_k}(t)=  g(X_{n_k}(t))-g(X_{n_k}(0)) \\
&- \int_0^t\int_U\int_A \left[ g(X_{n_k}(s) + f(s,X_{n_k}(s), u, a, m_{n_k}(s))) -g(X_{n_k}(s))\right] \nu(du)\rho^{n_k}_s(da)ds,
\end{split}
\end{equation*}
for any $g\in \mathbb{R}^d$.
Property (\ref{prmar}) and the convergence in distribution of the sequence $(X_{n_k},\rho_{n_k}, \N_{n_k})$ imply that 
\begin{align*}
0 &= \lim_{k\rightarrow \infty} E\left[h(X_{n_k}(t_i); i\leq j ) (M_g^{n_k}(t+s)-M_g^{n_k}(t))\right] \\
&=E\left[h(X(t_i); i\leq j ) (M_g(t+s)-M_g(t))\right] 
\end{align*}
thanks to continuity assumption (A), uniform convergence of $m_n$ and (\ref{conti}). Therefore we have proved that $X=X_{\rho,m}$ in distribution.
 
Thus we obtain 
\[
	\lim_{k\to\infty} Law(X_{n_k}) =Law (X_{\rho,m}),
\]
which implies the convergence 
\[
	\lim_{k\to\infty} \sup_{t\in [0,T]}|Law(X_k(t))- Law(X(t))|=0,
\]
that is, $Flow(X_{n_k}) \rightarrow Flow (X)$ uniformly. The convergence is then proved along a subsequence, but by hypothesis the limit
$Flow(X_{n})\rightarrow y$ exists in $\mathcal{L}$, hence $y=Flow (X) = Flow (X_{\rho,m})$.

It remains to prove that $\rho$ is optimal for $m$. 
Again the convergence in distribution of the sequence $(X_{n_k},\rho_{n_k}, \N_{n_k})$ implies that 
$\lim_k J(\rho_{n_k}, m_{n_k}) = J(\rho,m)$
thanks to continuity assumption (B), uniform convergence of $m_n$ and (\ref{conti}). 
Then from the optimality of $\rho_n$ for $m_n$, i.e. $J(\rho_{n_k}, m_{n_k})\leq J(\sigma, m_{n_k})$ for every $\sigma\in\mathcal{R}$, taking the limit as $k\rightarrow\infty$ we get $J(\rho,m)\leq J(\sigma, m)$ for every $\sigma\in\mathcal{R}$, which means that $\rho$ is optimal for $m$ and thus $y= Flow (X_{\rho,m}) \in \Phi(m)$ as required.
\end{proof}

\subsection{Relaxed feedback mean field game solutions}

Theorem \ref{teo} provides a relaxed (open-loop) solution of the mean field game (\ref{mfgr}). Under the same assumptions we obtain here a relaxed feedback mean field game solution which has the same cost and flow of the open-loop one. This result is similar to 
Theorem 3.7 in \cite{lacker15} and will provide approximate feedback Nash equilibria for the $N$-player game.

\begin{theorem}
Assume (A) and (B) and let $\left(((\Omega, \mathcal{F}, P; \mathbb{F}), \rho, \xi, \N),m,X_{\rho,m}\right)$ be a relaxed mean field game solution. 
Then there exists a relaxed feedback control $\widehat{\gamma}\in \widehat{\mathbb{A}}$ such that the tuple
 $\left(((\Omega, \mathcal{F}, P; \mathbb{F}), \widehat{\gamma}, \xi, \N), m, X_{\widehat{\gamma},m}\right)$
is a  relaxed feedback mean field game solution; namely
\begin{eqnarray}
	Flow (X_{\widehat{\gamma},m}) &=&  Flow(X_{\rho,m})  = m,
	\label{fluxm}\\
	 J(\widehat{\gamma},m) &=&  J(\rho,m).
\end{eqnarray}
\label{teorf}
\end{theorem}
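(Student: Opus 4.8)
The plan is to obtain $\widehat{\gamma}$ by \emph{mimicking} the optimal relaxed control. For each state $x\in\Sigma$ I would introduce the deterministic finite measure on $\mathcal{B}([0,T]\times A)$
\[
  \Theta_x(C):=E\Big[\int_0^T\!\!\int_A\mathbbm{1}_C(s,a)\,\mathbbm{1}_{\{X_{\rho,m}(s)=x\}}\,\rho_s(da)\,ds\Big],
\]
whose time marginal is $m_x(s)\,ds$ because $\rho_s(A)=1$ and $Law(X_{\rho,m}(s))=m(s)$ by the mean field condition. Since $A$ is a compact metric, hence Polish, space, a measurable disintegration yields a map $t\mapsto\widehat{\gamma}(t,x)\in\mathcal{P}(A)$ with $\Theta_x(dt,da)=\widehat{\gamma}(t,x)(da)\,m_x(t)\,dt$; on $\{t:m_x(t)=0\}$ I would set $\widehat{\gamma}(t,x):=\delta_{a_0}$ for a fixed $a_0\in A$. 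As $\Sigma$ is finite this produces a measurable $\widehat{\gamma}:[0,T]\times\Sigma\to\mathcal{P}(A)$, which together with the probability space, $\xi$ and $\N$ carried by the given relaxed solution is an element of $\widehat{\mathbb{A}}$. Then $X':=X_{\widehat{\gamma},m}$ exists and is pathwise unique by Lemma~\ref{lemmaexi}, and $\rho^{\widehat{\gamma}}\in\mathcal{R}$ denotes the induced relaxed open-loop control \eqref{rgg}.

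The core step is the marginal identity $Flow(X')=m$. By Lemma~\ref{Llem}, $n:=Flow(X')\in\mathcal{L}$, so $n$ is continuous with $n(0)=m_0=m(0)$. Applying the Dynkin formula \eqref{dynrel} to $X'$ (which solves \eqref{mfgrf}) and conditioning on the current state gives, for every $g\in\mathbb{R}^d$,
\[
  n(t)\cdot g-m_0\cdot g=\int_0^t\sum_{x\in\Sigma}n_x(s)\Big(\int_U\!\!\int_A\big[g(x+f(s,x,u,a,m(s)))-g(x)\big]\nu(du)\,\widehat{\gamma}(s,x)(da)\Big)ds.
\]
Writing \eqref{dynrel} for $X_{\rho,m}$ and using the defining property of the disintegration, with the bounded measurable integrand $(s,a)\mapsto\int_U[g(x+f(s,x,u,a,m(s)))-g(x)]\nu(du)$, admissible by (A) and $\nu(U)<\infty$, the flow $m$ satisfies the very same identity. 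Hence $n$ and $m$ both solve the linear integral equation $p(t)=m_0+\int_0^t p(s)Q(s)\,ds$ with the same bounded, Lebesgue-measurable coefficient $s\mapsto Q(s)\in\mathbb{R}^{d\times d}$ built from $\widehat{\gamma}$ and the fixed flow $m$; by uniqueness for linear Carath\'eodory equations, $n\equiv m$, which is \eqref{fluxm}. No circularity arises from the arbitrary choice on $\{m_x=0\}$: in the equation for $n$ the matrix $Q(s)$ is used for all states, whereas in the equation for $m$ each such value is weighted by $m_x(s)=0$.

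Matching the costs in \eqref{JR} is then routine. The terminal terms agree since $Flow(X_{\rho,m})(T)=m(T)=Flow(X')(T)$, so both equal $\sum_x m_x(T)\psi(x,m(T))$. For the running terms I would apply the disintegration property once more, now with the integrand $(s,a)\mapsto c(s,x,a,m(s))$ (bounded and measurable by (B)), to get $E\int_0^T\!\int_A c(s,X_{\rho,m}(s),a,m(s))\rho_s(da)\,ds=\int_0^T\sum_x m_x(s)\int_A c(s,x,a,m(s))\widehat{\gamma}(s,x)(da)\,ds$, while the analogous quantity for $X'$ equals $\int_0^T\sum_x n_x(s)\int_A c(s,x,a,m(s))\widehat{\gamma}(s,x)(da)\,ds$; since $n\equiv m$ these coincide, so $J(\widehat{\gamma},m)=J(\rho^{\widehat{\gamma}},m)=J(\rho,m)$. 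Finally $\rho^{\widehat{\gamma}}\in\mathcal{R}$ and $J(\rho^{\widehat{\gamma}},m)=J(\rho,m)\le J(\sigma,m)$ for all $\sigma\in\mathcal{R}$ by optimality of $\rho$, while $Flow(X')=m$; thus the tuple $\big(((\Omega,\mathcal{F},P;\mathbb{F}),\rho^{\widehat{\gamma}},\xi,\N),m,X'\big)$ verifies conditions (1)--(3) of Definition~\ref{solrel} (note $X'=X_{\rho^{\widehat{\gamma}},m}$ because $\rho^{\widehat{\gamma}}=\rho^{\widehat{\gamma},X'}$), i.e.\ $\big(((\Omega,\mathcal{F},P;\mathbb{F}),\widehat{\gamma},\xi,\N),m,X'\big)$ is a relaxed feedback mean field game solution. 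This parallels Theorem~3.7 in \cite{lacker15}; the finiteness of $\Sigma$ is precisely what lets uniqueness for a linear ODE replace the general mimicking/superposition machinery. I expect the main obstacle to be the first step — carrying out the measurable disintegration and checking that $\widehat{\gamma}$ is a legitimate element of $\widehat{\mathbb{A}}$ (in particular that the arbitrary choice on $\{m_x(t)=0\}$ is harmless) — after which the forward equation, its uniqueness and the cost comparison go through without difficulty.
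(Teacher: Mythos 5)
Your proposal is correct and follows essentially the same route as the paper: you construct $\widehat{\gamma}$ by disintegrating the occupation measure of $(t,X_{\rho,m}(t),a)$ (your per-state measures $\Theta_x$ are exactly the slices of the paper's measure $\Theta$, and your disintegration identity is the paper's conditional-expectation property (\ref{phii})), then identify the two flows as solutions of the same linear Kolmogorov forward ODE with a unique solution, and match the costs by the same substitution. The only cosmetic differences are that you handle the set $\{m_x(t)=0\}$ explicitly and argue uniqueness via Carath\'eodory/Gronwall rather than citing Brunick--Shreve for the mimicking identity.
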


\begin{proof}
The flow $m\in\mathcal{L}$ is fixed and set $X=X_{\rho,m}$. We claim that there exists a measurable function $\widehat{\gamma}:[0,T]\times\Sigma\longrightarrow \mathcal{P}(A)$ such that
\[
	\widehat{\gamma}(t,X(t)) = E[\rho_t| X(t)] \qquad \ell \otimes P\text{-almost every } (t,\omega)\in [0,T]\times\Omega.
\]
This holds if and only if 
\begin{equation}
	\int_A \varphi(t,X(t),a) [\widehat{\gamma}(t,X(t))](da) = E\left[ \int_A \varphi(t,X(t),a)\rho_t(da) \big| X(t)\right] 
\label{phii}
\end{equation}
for any bounded and measurable $\varphi: [0,T]\times \Sigma \times A \longrightarrow \mathbb{R}$.
In order to construct $\widehat{\gamma}$, define the probability measure $\Theta$ on $[0,T]\times\Sigma\times A$ by
\[
	\Theta(C):= \frac1T E\left[\int_0^T\int_A \mathbbm{1}_C (t,X(t),a) \rho_t(da)dt\right],\quad C\in\mathcal{B}([0,T]\times\Sigma\times A).
\]
Then build $\widehat{\gamma}$ by disintegration of $\Theta$:
\[
	\Theta(dt,ds,da)= \Theta_1(dt,dx) [\widehat{\gamma}(t,x)](da)
\]
where $\Theta_1$ denotes the $[0,T]\times \Sigma$ marginal of $\Theta$ and $\widehat{\gamma}:[0,T]\times \Sigma \longrightarrow \mathcal{P}(A)$ is measurable.
Following  \cite{lacker15}, we show that such $\widehat{\gamma}$ satisfies (\ref{phii}): 
for every bounded and measurable $h:[0,T]\times \Sigma \longrightarrow \mathbb{R}$ we get
\begin{align*}
	&E\left[\int_0^T h(t,X(t)) \int_A \varphi(t,X(t),a) [\widehat{\gamma}(t,X(t))](da) dt \right]\\
	&= T \int_{[0,T]\times \Sigma}h(t,x) \int_A \varphi(t,x,a) [\widehat{\gamma}(t,x)](da) \Theta_1(dt,dx)\\
	&=T \int_{[0,T]\times \Sigma\times A}h(t,x) \varphi(t,x,a) \Theta(dt,dx,da)\\
	&= E\left[\int_0^T h(t,X(t)) \int_A \varphi(t,X(t),a) \rho_t(da) dt\right],
\end{align*}
which provides (\ref{phii}) thanks to Lemma 5.2 in \cite{brunickshreve13}.

Having $\widehat{\gamma}$, (\ref{phii}) yields
\begin{align*}
	\int_U\int_A &f(t,X(t),u,a,m(t))[\widehat{\gamma}(t,X(t))](da)\nu(du)\\
	&= E\left[\left.\int_U\int_A f(t,X(t),u,a,m(t))\rho_t(da)\nu(du)\right|X(t)\right]
\end{align*}
$\ell \otimes P$-almost everywhere. 

Then we solve equation (\ref{mfgrf}) in the same probability space of $X$, under the relaxed feedback control $\widehat{\gamma}$,
and denote by $Y= X_{\widehat{\gamma},m}$ its solution.
By the Dynkin formula (\ref{dynrel}), we have for any $g\in\mathbb{R}^d$,
\begin{align*}
	E\left[ g(X(t))\right] &= E\left[ g(\xi) \right] +E\left[ \int_0^t\int_A \Lambda_s^a g(X(s)) \rho_s(da) ds\right] \\
	&= E\left[ g(\xi) \right] +E\left[ E\left[\left. \int_0^t\int_A \Lambda_s^a g(X(s)) \rho_s(da)\right| X(s)\right]ds \right]
\end{align*}
and then thanks to (\ref{phii}) 
\begin{equation}
	E\left[ g(X(t))\right] = E\left[g(\xi)\right] +E\left[ \int_0^t\int_A \Lambda_s^a g(X(s)) [\widehat{\gamma}(s,X(s))](da) ds \right], 
\label{flowX}
\end{equation}
while Dynkin's formula for $Y$ yields
\begin{equation}
	E\left[ g(Y(t))\right] = E\left[ g(\xi)\right] +E\left[ \int_0^t\int_A \Lambda_s^a g(Y(s)) [\widehat{\gamma}(s,Y(s))](da) ds \right].
\label{flowY} 
\end{equation}
Comparing (\ref{flowX}) and (\ref{flowY}) we obtain that
$Law (X(t)) $ and $Law (Y(t))$, which are vectors in $S\subset \mathbb{R}^d$, satisfy the same \mbox{ODE} in integral form, namely
\[
	g \cdot \pi(t) = g \cdot Law(\xi) + \int_0^t \int_\Sigma \int_A \Lambda_s^a g(x) [\widehat{\gamma}(s,x)](da) [\pi(t)] (dx) ds,\quad t\in [0,T],
\]
for any $g\in\mathbb{R}^{d}$, the unknown being denoted by $\pi:[0,T]\rightarrow S$. Taking $g = e_j$, $j=1,\ldots, d$, the corresponding system of \mbox{ODEs}, which is clearly linear in $\pi$, has a unique absolutely continuous solution $\pi\in \mathcal{L}$, hence (\ref{fluxm}) is proved.

Similarly, (\ref{phii}) gives 
\begin{align*}
J(\rho,m) &= E\left[\int_0^T\int_A c(t,X(t),a,m(t))\rho_t(da) dt + \psi(X(T),m(T))\right] \\
&=E\left[\int_0^T\int_A c(t,X(t),a,m(t))[\widehat{\gamma}(s,X(s))](da) dt + \psi(X(T),m(T))\right] 
\end{align*}
and then we use (\ref{fluxm})  to conclude that
\begin{align*}
	J(\rho,m) &= E\left[\int_0^T\int_A c(t,Y(t),a,m(t))[\widehat{\gamma}(s,Y(s))](da) dt + \psi(Y(T),m(T))\right] \\
	&= J(\widehat{\gamma},m).
\end{align*}
\end{proof}

\section{Feedback Mean field Game Solutions}

\subsection{Feedback optimal control for $m$ fixed}

 We show the existence of an optimal non-relaxed feedback control $\gamma_m$ for $J(\alpha,m)$ for any $m$, using the verification theorem for the related \emph{Hamilton-Jacobi-Bellman} equation. 
Let $m\in \mathcal{L}$ be fixed.

For any $t\in[0,T]$, $x\in\Sigma$ and $\alpha\in\mathcal{A}$ let $X_\alpha^{t,x}$ be the solution to 
\begin{equation}
	X_\alpha^{t,x}(s) = x +\int_t^s \int_U f(r^-,X_\alpha^{t,x}(r^-), u, \alpha(r), m(r)) \N(dr,du)
\label{Xtx}
\end{equation}
and set
\[
	J(t,x,\alpha, m) := E\left[ \int_t^T c(s,X_\alpha^{t,x} (s), \alpha(s), m(s))ds + \psi(X_\alpha^{t,x}(T), m(T)) \right].
\]
Next, define the \emph{value function} by
\begin{equation}
	V_m(t,x) := \inf_{\alpha \in \mathcal{A}} J(t,x,\alpha, m).
\label{V}
\end{equation}
Recall that the generator was defined in (\ref{gen}) by
\[
	\Lambda^{a,p}_t g(x):= \int_U [g(x + f(t,x, u, a, p)) -g(x)] \nu(du)
\]
for any $t,x,a,p$ and $g\in\mathbb{R}^d$.
For a function $v=v(t,x)$ the generator will be applied to the space variable, i.e. denote 
$\Lambda^{a,p}_t v(t,x)= \Lambda^{a,p}_tv(t,\cdot)(x)$.

Thanks to Theorem~D.5 in \cite{hernandezlermalasserre96} on measurable selectors, there exists a feedback control $\gamma_m \in\mathbb{A}$ 
(i.e.\ measurable) such that
\begin{equation}
	\gamma_m (t,x) \in \argmin_{a\in A} \left\{ \Lambda^{a,m(t)}_t V_{m}(t,x)+ c(t,x,a,m(t))\right\},
\label{argmin}
\end{equation}
where $V_m$ is the value function (\ref{V}).
Let us remark that the above minimum exists for any $t$ and $x$ if (A) and (B) hold, as the right hand side turns out to be a continuous function of the variable $a$, since the value function is trivially Lipschitz continuous in $x$.

\begin{theorem}
Assume (A) and (B). Let $m\in \mathcal{L}$. Then any feedback control $\gamma_m$ defined by (\ref{argmin}) is optimal, that is, $J(\gamma_m,m)\leq J(\alpha,m)$ for any $\alpha\in\mathcal{A}$.
\label{exi}
\end{theorem}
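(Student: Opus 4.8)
The strategy is a standard verification argument adapted to the pure-jump setting. First I would establish that the value function $V_m$ solves, or at least satisfies in a suitable sense, the Hamilton--Jacobi--Bellman equation
\begin{equation*}
	\partial_t V_m(t,x) + \min_{a\in A}\left\{\Lambda^{a,m(t)}_t V_m(t,x) + c(t,x,a,m(t))\right\} = 0,\quad V_m(T,x)=\psi(x,m(T)).
\end{equation*}
Because $\Sigma$ is finite, for each fixed $x$ the map $t\mapsto V_m(t,x)$ is a real-valued function of one variable, and the natural route is to first prove the dynamic programming principle
\begin{equation*}
	V_m(t,x) = \inf_{\alpha\in\mathcal{A}} E\left[\int_t^{t+h} c(s,X_\alpha^{t,x}(s),\alpha(s),m(s))\,ds + V_m(t+h,X_\alpha^{t,x}(t+h))\right],
\end{equation*}
then use the $O(h)$ jump estimates together with continuity assumptions (A) and (B) to deduce that $V_m(\cdot,x)$ is Lipschitz in time, hence differentiable almost everywhere, and that the HJB equation holds at every point of differentiability. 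One then checks that a Lipschitz function satisfying the equation a.e.\ is enough for verification, since along any trajectory the composition $t\mapsto V_m(t,X(t))$ has only finitely many jumps and is absolutely continuous between them.

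Next comes the verification step proper. Fix an arbitrary $\alpha\in\mathcal{A}$ and let $X = X_{\alpha,m}$ be the corresponding solution of \eqref{mfg}. Apply the Dynkin-type formula \eqref{dynrel} (in its pathwise form, i.e.\ the martingale $M_g^X$ from \eqref{marg}, extended to time-dependent $g$) to $V_m$: this gives
\begin{equation*}
	E[\psi(X(T),m(T))] = V_m(0,x) + E\int_0^T\left[\partial_s V_m(s,X(s)) + \Lambda^{\alpha(s),m(s)}_s V_m(s,X(s))\right]ds,
\end{equation*}
after using $V_m(T,\cdot) = \psi(\cdot,m(T))$. By the HJB inequality the integrand is bounded below by $-c(s,X(s),\alpha(s),m(s))$, which yields $V_m(0,x)\le J(0,x,\alpha,m)$ and hence $V_m(0,x)\le \inf_\alpha J(0,x,\alpha,m)$, the reverse inequality being the definition of $V_m$. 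Then I would repeat the computation with the feedback control $\gamma_m$ from \eqref{argmin}: by the very definition of $\gamma_m$ as a minimizer of the pre-Hamiltonian against $V_m$, the corresponding integrand equals exactly $-c(s,X_{\gamma_m,m}(s),\gamma_m(s,X_{\gamma_m,m}(s)),m(s))$, so the inequality becomes an equality and $J(\gamma_m,m) = V_m(0,x)$ (read with initial law $m_0$, by integrating over $\xi$). Combining, $J(\gamma_m,m) = \inf_{\alpha\in\mathcal{A}} J(\alpha,m)\le J(\alpha,m)$ for every $\alpha$.

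The main obstacle is the regularity of $V_m$ in time and the legitimacy of applying the Dynkin formula to it. One must show $V_m(\cdot,x)$ is Lipschitz (so a.e.\ differentiable) and that the measurable selector $\gamma_m$ really produces, via Theorem~D.5 of \citet{hernandezlermalasserre96}, an admissible feedback control for which \eqref{Xfeed} has a unique solution --- the latter is already guaranteed by \eqref{lip}. A secondary technical point is that the HJB equation only holds Lebesgue-a.e.\ in $t$; this is harmless because, for a fixed trajectory, the occupation measure of $X(\cdot)$ on $[0,T]$ in the time variable is just Lebesgue measure, so the set where the equation may fail is not charged. One must also be careful to separate the contributions of the purely-predictable drift part from the compensated martingale part when invoking \eqref{marg} with a time-dependent test function; writing $V_m$ on the grid of its differentiability points and passing to the limit, or equivalently mollifying $V_m$ in time and passing to the limit using the Lipschitz bound, handles this cleanly. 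Everything else is routine given assumptions (A) and (B).
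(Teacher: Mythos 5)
Your argument is a correct verification proof and reaches the same conclusion, but the way you establish that the HJB equation admits a suitable solution differs from the paper. You work directly with the value function $V_m$: prove the dynamic programming principle, deduce from it and the $O(h)$ jump estimates that $V_m(\cdot,x)$ is Lipschitz in $t$, show that the HJB holds at every point of differentiability, and then carry out the verification step explicitly via the Dynkin formula. The paper never touches the DPP: since $\Sigma$ is finite it rewrites the HJB as a terminal value problem for an ODE in $\mathbb{R}^d$, namely $\frac{d}{dt}W(t) + F(t,W(t)) = 0$ with $W(T)=\Psi$, proves the elementary estimate $|F_x(t,w)-F_x(t,z)|\leq 2\nu(U)\max_{y\in\Sigma}|(w-z)_y|$ by comparing minimizers, and invokes standard ODE theory to obtain an absolutely continuous solution; the cited verification theorem (Proposition~\ref{veri}, a version of Fleming--Soner) then identifies that solution with $V_m$ and delivers the optimality of $\gamma_m$ in one stroke. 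The paper's route gets the regularity of the solution for free from Carath\'eodory theory and avoids the measurable-selection and control-concatenation technicalities hidden in a rigorous proof of the DPP, which your plan asserts but does not prove; your route is more self-contained on the verification side and would generalize to settings where the HJB is not a finite-dimensional ODE, but in this finite-state setting it does more work than necessary. Both approaches ultimately rest on the same ingredients: attainment of the minimum over $a$ (compactness of $A$ plus continuity from (A) and (B)), measurable selection for $\gamma_m$, and the fact that a Lebesgue-null set of times is not charged along trajectories.
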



In order to prove Theorem~\ref{exi}, we use the \emph{Hamilton-Jacobi-Bellman} equation of the problem (see, for instance, Chapter~3 in \citet{flemingsoner06}):
\begin{equation}
\begin{cases}
	\frac{\partial v}{\partial t}(t,x) + \inf_{a\in A}\left\{\Lambda^{a,m(t)}_t v(t,x) + c(t,x,a,m(t))\right\} =0 &\text{in } [0,T[\times \Sigma \\
	v(T, x) = \psi(x,m(T)) &\text{in } \Sigma
\end{cases}
\label{HJB1}
\end{equation}
for a function $v\!: [0,T]\times \Sigma \rightarrow \mathbb{R}$.
Let us define, for $g\in \mathbb{R}^{d} \equiv \{\Sigma \rightarrow \mathbb{R}\}$,
\[
	G(t,x,g):= \inf_{a\in A} \left\{\int_U [g(x + f(t,x, u, a, m(t))) -g(x)] \nu(du) + c(t,x,a,m(t))\right\}.
\]
Since $\Sigma$ is finite, we shall denote $W_x(t):= v(t,x)$, $W(t):= (W_1(t),\ldots, W_d(t))$, $F_x(t,g):=G(t,x,g)$, $F(t,g):= (F_1(t,g),\ldots, F_d(t,g))$, $\Psi_x:= \psi(x,m(T))$ and $\Psi:= (\Psi_1(t),\ldots, \Psi_d(t))$. 
Therefore (\ref{HJB1}) can be written as
\begin{equation}
\begin{cases}
	\frac{d}{d t}W(t) + F(t,W(t)) =0, & t\in [0,T[,\\
	W(T) = \Psi, & 
	\end{cases}
\label{HJB4}
\end{equation}
which is in fact an \mbox{ODE}.

Define a \emph{classical solution} to (\ref{HJB4}) as an absolutely continuous function   $W$ from $[0,T]$ to $\mathbb{R}^d$ such that
$W(t)= \Psi + \int_t^T F(s,W(s))ds$ for every $t\in [0,T]$.
We apply to our problem the following verification theorem, which is a version of Theorem~3.8.1 in \citet[p.\,135]{flemingsoner06}:

\begin{proposition}[Verification]
Let $v$ be a classical solution to (\ref{HJB4}), and let $\gamma_m$ be any feedback control such that (\ref{argmin}) holds for Lebesgue almost every $t$.
Then
\[
	v(t,x)= J(t,x,\gamma_m,m) = V_m(t,x)
\]
for any $t\in[0,T]$ and $x\in \Sigma$, where $V_m$ is the value function (\ref{V}).
\label{veri}
\end{proposition}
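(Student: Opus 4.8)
The plan is to establish the two inequalities $v(t,x)\le V_m(t,x)$ and $V_m(t,x)\le v(t,x)$ separately, by comparing $v$ evaluated along controlled trajectories to the cost, the key tool being a time-inhomogeneous Dynkin formula together with the fact that $v=W$ solves the \mbox{ODE} (\ref{HJB4}) (equivalently the \mbox{HJB} equation (\ref{HJB1})). Note first that $v$ is bounded on $[0,T]\times\Sigma$, since $\Sigma$ is finite and $v$ is absolutely continuous in $t$; likewise $\partial v/\partial t$ exists for a.e.\ $t$ and is bounded, $c$ and $\psi$ are bounded, and the jump intensity $\nu(U)$ is finite.

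First I would fix $(t,x)$, take an arbitrary $\alpha\in\mathcal{A}$, and let $X=X_\alpha^{t,x}$ solve (\ref{Xtx}). Combining the absolutely continuous part in time (coming from $\partial v/\partial t$) with the martingale property (\ref{marg})--(\ref{dynrel}) specialized to the ordinary control $\alpha$ (i.e.\ $\rho_s=\delta_{\alpha(s)}$, so that the generator term becomes $\Lambda_s^{\alpha(s),m(s)}$), one obtains that
\[
	v(s,X(s)) - v(t,x) - \int_t^s \left[ \tfrac{\partial v}{\partial r}(r,X(r)) + \Lambda_r^{\alpha(r),m(r)} v(r,X(r)) \right] dr
\]
is an $\mathbb{F}$-martingale on $[t,T]$ vanishing at $s=t$. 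Taking expectations at $s=T$ and using $v(T,\cdot)=\psi(\cdot,m(T))$ gives
\[
	v(t,x) = E\!\left[\psi(X(T),m(T))\right] - E\!\int_t^T \left[ \tfrac{\partial v}{\partial r}(r,X(r)) + \Lambda_r^{\alpha(r),m(r)} v(r,X(r)) \right] dr .
\]
Since $v$ solves (\ref{HJB4})/(\ref{HJB1}), for every $r$, every $y\in\Sigma$ and every $a\in A$ one has $\tfrac{\partial v}{\partial r}(r,y) + \Lambda_r^{a,m(r)} v(r,y) + c(r,y,a,m(r)) \ge \tfrac{\partial v}{\partial r}(r,y) + \inf_{b\in A}\{\,\cdots\,\} = 0$, hence $\tfrac{\partial v}{\partial r}(r,X(r)) + \Lambda_r^{\alpha(r),m(r)} v(r,X(r)) \ge -c(r,X(r),\alpha(r),m(r))$. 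Substituting yields $v(t,x)\le J(t,x,\alpha,m)$, and taking the infimum over $\alpha\in\mathcal{A}$ gives $v(t,x)\le V_m(t,x)$.

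For the reverse inequality I would repeat the computation with the feedback control $\gamma_m$ of (\ref{argmin}), i.e.\ with its associated open-loop control $\alpha^{\gamma_m}(r)=\gamma_m(r,X_{\gamma_m}^{t,x}(r^-))$. Because (\ref{argmin}) holds for Lebesgue-a.e.\ $r$ and for \emph{every} $y\in\Sigma$ (a finite set), the inequality above becomes an equality $\ell\otimes P$-a.e.\ along the trajectory, so the Dynkin identity now reads $v(t,x) = J(t,x,\gamma_m,m)$. Combining with $J(t,x,\gamma_m,m)\ge V_m(t,x)$ and the first part, $V_m(t,x)\le J(t,x,\gamma_m,m) = v(t,x) \le V_m(t,x)$, so equality holds throughout; this gives the claimed identities and, together with existence of a classical solution to (\ref{HJB4}), proves Theorem~\ref{exi}.

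The main obstacle is the justification of the time-inhomogeneous Dynkin formula for $v$ composed with the pure-jump process $X$: one must check that the absolutely continuous contribution of $\partial v/\partial t$ and the compensated jump contribution combine to the stated martingale, without integrability or localization issues. This is precisely where finiteness of $\Sigma$ is essential --- all the relevant quantities ($v$, $\partial v/\partial t$ a.e., $c$, $\nu(U)$) are bounded --- so the argument reduces to the time-inhomogeneous version of (\ref{marg})/(\ref{M}), e.g.\ via approximation of $r\mapsto v(r,\cdot)$ by piecewise-constant-in-time functions. A minor additional point is passing from ``(\ref{argmin}) holds for a.e.\ $t$'' to an $\ell\otimes P$-a.e.\ statement along $X$, which is immediate since the exceptional set of times is deterministic.
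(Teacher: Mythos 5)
Your argument is correct and is precisely the standard verification argument that the paper itself does not write out, deferring instead to Theorem~3.8.1 of Fleming and Soner: the two inequalities obtained by applying the Dynkin formula along an arbitrary controlled trajectory and along the trajectory of $\gamma_m$, combined with the \mbox{HJB} (in)equality, are exactly the content of that reference. The one point genuinely requiring care --- the Dynkin formula when $v$ is only absolutely continuous in $t$ --- is the one you flag, and it is unproblematic here because $X$ is piecewise constant between its a.s.\ finitely many jumps (as $\nu(U)<\infty$), so $r\mapsto v(r,X(r))$ is absolutely continuous between jumps and the compensated-jump term produces the generator $\Lambda^{a,m(r)}_r$ as in (\ref{marg}).
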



We are now in the position to prove Theorem \ref{exi}. 

\begin{proof}[Proof of Theorem \ref{exi}] In view of Proposition \ref{veri}, we have just to show that there exists a classical solution to (\ref{HJB4}). Hence it is enough to prove that $F=F(t,w)$ is globally Lipschitz continuous in $w\in\mathbb{R}^d$, uniformly in $t\in[0,T]$. So let $t$ be fixed and take $w,z \in \mathbb{R}^d$ and $x\in\Sigma$. Recall that
\[
	F_x(t,w):= \min_{a\in A} \left\{\int_U [w_{x + f(t,x, u, a, m(t)))} -w_x] \nu(du) + c(t,x,a,m(t))\right\},
\]
and let $b$ be a minimizer for $F_x(t,z)$. Then
\begin{align*}
F_x(t,w)&- F_x(t,z)= \min_{a\in A} \left\{\int_U [w_{x + f(t,x, u, a, m(t)))} -w_x] \nu(du) + c(t,x,a,m(t))\right\} \\
&-\int_U [z_{x + f(t,x, u, b, m(t)))} -z_x] \nu(du) - c(t,x,b,m(t))\\
&\leq  \int_U [w_{x + f(t,x, u, b, m(t)))} -w_x] \nu(du) -\int_U [z_{x + f(t,x, u, b, m(t)))} -z_x] \nu(du)\\
&\leq \int_U \left|w_{x + f(t,x, u, b, m(t)))} -w_x - z_{x + f(t,x, u, b, m(t)))} +z_x\right|\nu(du)\\
&= \int_U \left|(w-z)_{x + f(t,x, u, b, m(t)))} -(w-z)_x \right|\nu(du) \leq 2 \nu(U) \max_{y\in\Sigma} \left|(w-z)_y \right|.
\end{align*}
Changing the role of $w$ and $z$ we obtain the converse, hence
\[
	\left|F_x(t,w)- F_x(t,z)\right|\leq 2 \nu(U) \max_{y\in\Sigma} \left|(w-z)_y \right|
\]
for any $x$, which implies
\[
	\max_{x\in\Sigma}\left|F_x(t,w)- F_x(t,z)\right|\leq 2 \nu(U) \max_{y\in\Sigma} \left|(w-z)_y \right|.
\]
Therefore $F$ is Lipschitz continuous in $w$ in the norm $||w||=\max_{y\in\Sigma} \left|w_y \right|$, which is equivalent to the Euclidean norm in $\mathbb{R}^d$.
\end{proof}

\subsection{Uniqueness of the feedback control for $m$ fixed}

Consider the \emph{pre-Hamiltonian}, as defined in (\ref{H}),
\[
	H(t,x,a,p,g) := \int_U \left[g(x + f(t,x, u, a, p)) -g(x)\right]\nu(du) + c(t,x,a,p)
\]
for $(t,x,a,p)\in [0,T]\times \Sigma\times A\times S$ and $g\in\mathbb{R}^d$. We make the additional assumption (C); so let us recall that $a^\ast(t,x,p,g)$ is the unique minimizer of $H(t,x,a,p,g)$ in $a\in A$. Define for $m\in\mathcal{L}$ the feedback control
\begin{equation}
	\gamma_m(t,x) := a^\ast(t,x,m(t),V_m(t,\cdot))
\label{gammam}
\end{equation}
where $V_m$ is the value function (\ref{V}). 

\begin{theorem}
	Assume (A), (B) and (C).
	Given $m\in \mathcal{L}$, let $\sigma\in \R$ be any optimal relaxed control for $m$ and let $X_{\sigma,m}$ be the corresponding solution to (\ref{mfgr}). 
	Then $\sigma_t = \gamma_m(t, X_{\sigma,m}(t))$ for $\ell\otimes P$-almost every $(t,\omega)$, that is, $\sigma$ corresponds to the feedback control $\gamma_m$.
\label{teo2}
\end{theorem}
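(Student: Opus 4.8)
The plan is to combine the Hamilton--Jacobi--Bellman equation for the fixed-$m$ control problem with a dynamic-programming (martingale) argument: optimality of $\sigma$ will be shown to force $\sigma_s$ to charge, for $\ell\otimes P$-a.e.\ $(s,\omega)$, only the set of minimizers of the pre-Hamiltonian $a\mapsto H(s,X_{\sigma,m}(s),a,m(s),V_m(s,\cdot))$, and assumption (C) collapses that set to the single point $\gamma_m(s,X_{\sigma,m}(s))$.

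First I would recall from Theorem~\ref{exi} and Proposition~\ref{veri} that $V_m$ is a classical solution of the \mbox{HJB} equation (\ref{HJB4}): for every $x\in\Sigma$ the map $t\mapsto V_m(t,x)$ is absolutely continuous, $\partial_t V_m(t,x) + \inf_{a\in A} H(t,x,a,m(t),V_m(t,\cdot)) = 0$ for a.e.\ $t$, and $V_m(T,x)=\psi(x,m(T))$; moreover the feedback control $\gamma_m$ of (\ref{gammam}) is optimal with $J(\gamma_m,m)=E[V_m(0,\xi)]$. Next I would establish a time-dependent version of the Dynkin formula (\ref{dynrel}): writing $X:=X_{\sigma,m}$, for $0\le t_1\le t_2\le T$,
\[
E[V_m(t_2,X(t_2))] - E[V_m(t_1,X(t_1))] = E\int_{t_1}^{t_2}\int_A \Big[\partial_s V_m(s,X(s)) + \Lambda_s^{a,m(s)}V_m(s,X(s))\Big]\sigma_s(da)\,ds .
\]
This follows from the pathwise decomposition of $s\mapsto V_m(s,X(s))$ into an absolutely continuous part, which between the (finitely many) jumps of $X$ accumulates $\partial_sV_m(s,X(s))\,ds$, and a pure-jump part whose compensator is read off from the martingale property (\ref{M}) of $\N_\sigma$; alternatively one mollifies $V_m$ in $t$, applies the (time-dependent analogue of the) Dynkin formula for smooth integrands, and passes to the limit using absolute continuity.

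Now set $\Gamma(t):=V_m(t,X(t)) + \int_0^t\int_A c(s,X(s),a,m(s))\,\sigma_s(da)\,ds$. Since $\sigma_s$ is a probability measure and $\Lambda_s^{a,m(s)}V_m(s,X(s)) + c(s,X(s),a,m(s)) = H(s,X(s),a,m(s),V_m(s,\cdot))$ by (\ref{H}), the displayed formula and the \mbox{HJB} equation give, with $\delta(s) := \int_A H(s,X(s),a,m(s),V_m(s,\cdot))\,\sigma_s(da) - \inf_{a\in A}H(s,X(s),a,m(s),V_m(s,\cdot)) \ge 0$,
\[
E[\Gamma(t_2)] - E[\Gamma(t_1)] = E\int_{t_1}^{t_2}\delta(s)\,ds \ \ge\ 0 .
\]
Thus $t\mapsto E[\Gamma(t)]$ is nondecreasing; taking $t_1=0$, $t_2=T$ and using $V_m(T,\cdot)=\psi(\cdot,m(T))$ yields $J(\sigma,m)=E[\Gamma(T)]\ge E[\Gamma(0)]=E[V_m(0,\xi)]$.

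Finally I would close the argument. Since $\gamma_m\in\mathbb{A}$ induces an ordinary control in $\mathcal{A}$, hence an element of $\mathcal{R}$, optimality of $\sigma$ gives $J(\sigma,m)\le J(\gamma_m,m)=E[V_m(0,\xi)]$, so in fact $J(\sigma,m)=E[V_m(0,\xi)]$ and $E\int_0^T\delta(s)\,ds=0$. As $\delta\ge 0$, it follows that $\delta(s)=0$ for $\ell\otimes P$-a.e.\ $(s,\omega)$, i.e.\ $\sigma_s$ is concentrated on $\argmin_{a\in A}H(s,X(s),a,m(s),V_m(s,\cdot))$. By assumption (C) this set is the singleton $\{a^\ast(s,X(s),m(s),V_m(s,\cdot))\}=\{\gamma_m(s,X(s))\}$, so $\sigma_s=\delta_{\gamma_m(s,X(s))}$, which is exactly $\sigma_t=\gamma_m(t,X_{\sigma,m}(t))$ for $\ell\otimes P$-a.e.\ $(t,\omega)$. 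The main obstacle is the rigorous justification of the time-dependent Dynkin formula for $V_m$, which is only Lipschitz (absolutely continuous), not $C^1$, in $t$; the remaining steps use only boundedness, the measurable-selection/verification results already established, and the uniqueness built into (C).
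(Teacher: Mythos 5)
Your proof is correct and follows essentially the same route as the paper: apply the Dynkin formula to $V_m$ along the optimal trajectory, use the HJB equation to see that the resulting defect $\delta(s)=\tilde H(s,X(s),\sigma_s,\ldots)-\inf_a H(s,X(s),a,\ldots)$ is nonnegative, force it to vanish $\ell\otimes P$-a.e.\ by optimality, and conclude via the uniqueness of the minimizing measure $Q^\ast=\delta_{a^\ast}$ under (C) (Lemma~\ref{Q}). The only difference is that you obtain the upper bound $J(\sigma,m)\le E[V_m(0,\xi)]$ by comparing with the admissible control $\gamma_m$ directly, whereas the paper invokes the (open-loop) chattering lemma to identify $\min_{\mathcal R}J=\inf_{\mathcal A}J$ --- a harmless and if anything slightly cleaner shortcut.
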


This result and the proof of Theorem \ref{teo} imply that any relaxed solution of the mean field game must correspond to a feedback solution:

\begin{corollary}
	Assume (A), (B) and (C).
	Then there exists a feedback solution $(\gamma, m, X)$ of the mean field game, and any solution is such that its control coincides with $\gamma_m$.
\label{coro}
\end{corollary}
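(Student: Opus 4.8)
The plan is to assemble the three results already proved. Existence will come from feeding the relaxed solution produced by Theorem~\ref{teo} into Theorem~\ref{teo2}, while the uniqueness-of-form statement is an immediate consequence of Theorem~\ref{teo2} alone. So the first step: by Theorem~\ref{teo}, under (A) and (B) there is a relaxed solution $\bigl(((\Omega,\mathcal{F},P;\mathbb{F}),\rho,\xi,\N),m,X_{\rho,m}\bigr)$; since $\rho$ is optimal for $m$ among all of $\mathcal{R}$, assumption (C) lets me apply Theorem~\ref{teo2}, which gives $\rho_t=\gamma_m(t,X_{\rho,m}(t))$ for $\ell\otimes P$-almost every $(t,\omega)$, with $\gamma_m$ the feedback control of~(\ref{gammam}). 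Under (C) this $\gamma_m$ is also a feedback control in the sense of~(\ref{argmin}), because the $\argmin$ there reduces to the singleton $\{a^\ast(t,x,m(t),V_m(t,\cdot))\}$, and it belongs to $\mathbb{A}$, measurability following from Theorem~D.5 in \cite{hernandezlermalasserre96}.

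Next I would verify that $\bigl(((\Omega,\mathcal{F},P;\mathbb{F}),\gamma_m,\xi,\N),m,X_{\gamma_m,m}\bigr)$ is a feedback solution in the sense of Definition~\ref{sol}, where $X_{\gamma_m,m}$ is the pathwise unique solution of~(\ref{Xfeed}) on the same stochastic basis and $\alpha^{\gamma_m}$ is the associated open-loop control~(\ref{algam}). Optimality, $J(\gamma_m,m)=J(\alpha^{\gamma_m},m)\le J(\beta,m)$ for every $\beta\in\mathcal{A}$, is precisely Theorem~\ref{exi} applied to this $\gamma_m$. The mean field condition is the only point needing an argument: $X_{\rho,m}$ is driven by the relaxed Poisson measure $\N_\rho$ whereas $X_{\gamma_m,m}$ is driven by the ordinary $\N$ in feedback form, so a priori these are different processes and one must compare their one-dimensional time marginals. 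Writing $\pi(t):=Law(X_{\rho,m}(t))$ and using the Dynkin formula~(\ref{dynrel}) together with $\rho_t=\delta_{\gamma_m(t,X_{\rho,m}(t))}$ yields, for every $g\in\mathbb{R}^d$,
\[
	g\cdot\pi(t)=g\cdot m_0+\int_0^t\sum_{x\in\Sigma}\pi(s)(x)\,\Lambda^{\gamma_m(s,x),m(s)}_s g(x)\,ds,\qquad t\in[0,T],
\]
and the same Dynkin formula applied to $X_{\gamma_m,m}$ shows that $Law(X_{\gamma_m,m}(t))$ satisfies the same relation. Taking $g=e_1,\dots,e_d$ this becomes a linear ODE in $S\subset\mathbb{R}^d$ with a unique absolutely continuous solution — exactly the uniqueness argument used at the end of the proof of Theorem~\ref{teorf} — hence $Flow(X_{\gamma_m,m})=Flow(X_{\rho,m})=m$, the last equality being the mean field condition of the relaxed solution. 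Thus $(\gamma_m,m,X_{\gamma_m,m})$ is a feedback solution, proving existence.

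Finally, for the second assertion, let $\bigl(((\Omega,\mathcal{F},P;\mathbb{F}),\widehat\gamma,\widehat m,\widehat X)\bigr)$ be an arbitrary feedback solution; by Definition~\ref{sol} the open-loop control $\alpha^{\widehat\gamma}$ is optimal for $\widehat m$ over $\mathcal{A}$, hence also over $\mathcal{R}$ by the equality of infima from the chattering lemma, so Theorem~\ref{teo2} with flow $\widehat m$ gives $\alpha^{\widehat\gamma}(t)=\gamma_{\widehat m}(t,X_{\widehat\gamma,\widehat m}(t))$ for $\ell\otimes P$-almost every $(t,\omega)$; that is, the control of the solution coincides with $\gamma_{\widehat m}$ (the $\gamma_m$ of the statement, $m$ being the solution's own flow). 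Since Theorem~\ref{teo2} is stated for relaxed controls, the same conclusion holds verbatim for relaxed (feedback) solutions. The only step that is not a direct citation is the mean field condition in the existence part, and it is no harder than the linear-ODE uniqueness already carried out in the proof of Theorem~\ref{teorf}, so I do not expect a genuine obstacle.
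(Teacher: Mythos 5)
Your proposal is correct and follows exactly the route the paper intends: the paper offers no written proof beyond the remark that the corollary follows from Theorem~\ref{teo2} together with the proof of Theorem~\ref{teo}, and your argument assembles precisely those ingredients (Theorem~\ref{teo} for a relaxed solution, Theorem~\ref{teo2} to identify its control with $\gamma_m$, Theorem~\ref{exi} for optimality of $\gamma_m$, and the linear-ODE uniqueness from the end of the proof of Theorem~\ref{teorf} for the mean field condition). The only detail worth flagging is one you already handled: passing from optimality over $\mathcal{A}$ in Definition~\ref{sol} to optimality over $\mathcal{R}$, which the chattering lemma's equality of infima justifies.
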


Let $Q\in \mathcal{P}(A)$, and define
\[
	\widetilde{H}(t,x,Q,p,g):= \int_A H(t,x,a,p,g) Q(da).
\]

\begin{lemma}
If $H$ is continuous in $a$, then 
\begin{equation}
	\min_{Q\in \mathcal{P}(A)} \widetilde{H}(t,x,Q,p,g) = \min_{a\in A} H(t,x,a,p,g)
\label{Htilde}
\end{equation}
for any $t,x,p$ and $g$.
Moreover, if (C) holds, then there exists a unique $Q^\ast \in\mathcal{P}(A)$ such that
\[
	\widetilde{H}(t,x,Q^\ast,p,g) = \min_{Q\in \mathcal{P}(A)} \widetilde{H}(t,x,Q,p,g) = \min_{a\in A} H(t,x,a,p,g) = H(t,x,a^\ast,p,g)
\]
and $Q^\ast = \delta_{a^\ast}$, where $a^\ast = a^\ast(t,x,p,g)$.
\label{Q}
\end{lemma}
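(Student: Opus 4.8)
The plan is to use the fact that $Q\mapsto\widetilde H(t,x,Q,p,g)$ is an affine functional on the convex set $\mathcal P(A)$, whose minimum is therefore attained at an extreme point of $\mathcal P(A)$, i.e.\ at a Dirac mass; since Dirac masses correspond bijectively to points of $A$, this reduces the minimization over $\mathcal P(A)$ to the one over $A$. Throughout, $t,x,p,g$ are fixed and I write $\underline H:=\min_{a\in A}H(t,x,a,p,g)$, which is attained because $A$ is compact and, by hypothesis, $a\mapsto H(t,x,a,p,g)$ is continuous; in particular this map is bounded and Borel, so $\widetilde H(t,x,Q,p,g)$ is well defined and finite for every $Q$.

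The first step is the lower bound: since $H(t,x,a,p,g)\ge\underline H$ for every $a$ and $Q$ is a probability measure, integrating gives $\widetilde H(t,x,Q,p,g)\ge\underline H$ for all $Q\in\mathcal P(A)$. The second step is the matching upper bound: choosing a minimizer $a_0$ of $H(t,x,\cdot,p,g)$ and testing with $Q=\delta_{a_0}$ yields $\widetilde H(t,x,\delta_{a_0},p,g)=H(t,x,a_0,p,g)=\underline H$. Together these give \eqref{Htilde} and show that the minimum over $\mathcal P(A)$ is attained (at $\delta_{a_0}$).

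For the uniqueness part, assume (C) and let $a^\ast=a^\ast(t,x,p,g)$ be the unique minimizer of $H(t,x,\cdot,p,g)$. By the previous step $\delta_{a^\ast}$ realizes $\min_{Q}\widetilde H(t,x,Q,p,g)=\underline H=H(t,x,a^\ast,p,g)$. Conversely, suppose $Q^\ast\in\mathcal P(A)$ is any minimizer; then $\int_A\bigl(H(t,x,a,p,g)-\underline H\bigr)\,Q^\ast(da)=0$ with a nonnegative Borel integrand, so $H(t,x,a,p,g)=\underline H$ for $Q^\ast$-a.e.\ $a$. By (C) the only such $a$ is $a^\ast$, and since $\{a^\ast\}$ is closed (hence Borel) in the metric space $A$ this forces $Q^\ast(A\setminus\{a^\ast\})=0$, i.e.\ $Q^\ast=\delta_{a^\ast}$.

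There is essentially no serious obstacle here: the argument is elementary, and the only points needing a little care are the measurability of $a\mapsto H(t,x,a,p,g)$ (provided by continuity) and of the singleton $\{a^\ast\}$, together with the standard fact that an integral of a nonnegative function vanishes only if the function is a.e.\ zero. Should one prefer, the first half could instead be deduced from Bauer's maximum principle applied to the affine functional $\widetilde H(t,x,\cdot,p,g)$ on $\mathcal P(A)$, but the direct computation above is shorter and self-contained.
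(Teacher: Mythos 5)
Your proof is correct and follows essentially the same route as the paper: the two-sided comparison between $\min_Q\widetilde H$ and $\min_a H$ via Dirac masses and the pointwise lower bound, followed by the observation that $\int_A\bigl(H(t,x,a,p,g)-H(t,x,a^\ast,p,g)\bigr)\,Q^\ast(da)=0$ with a nonnegative integrand forces $Q^\ast(\{a^\ast\})=1$ under (C). The only cosmetic difference is that the paper first invokes compactness of $\mathcal{P}(A)$ and weak continuity of $\widetilde H$ to get a minimizer $Q^\ast$, whereas you obtain attainment directly from the Dirac mass $\delta_{a_0}$; both are fine.
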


\begin{proof}
If $H$ is continuous in $a$, then $\widetilde{H}$ is continuous in $Q\in\mathcal{P}(A)$ in the weak topology. Since $\mathcal{P}(A)$ is compact, there exists a minimum: let $Q^\ast$ be a minimizer. 
For fixed $t,x,p$ and $g$ we have 
\[
	\min_{Q\in \mathcal{P}(A)} \widetilde{H}(t,x,Q,p,g) \leq \min_{Q= \delta_a , a\in A } \int_A H(t,x,a,p,g) Q(da) = \min_{a\in A} H(t,x,a,p,g)
\]
and
\[
	\min_{Q\in \mathcal{P}(A)} \int_A H(t,x,a,p,g) Q(da) \geq \min_{Q\in \mathcal{P}(A)} \int_A H(t,x,a^\ast,p,g) Q(da) = H(t,x,a^\ast,p,g),
\]
which means that $\widetilde{H}(t,x,Q^\ast,p,g) =H(t,x,a^\ast,p,g)$.

Consider $H(t,x,a,p,g) - H(t,x,a^\ast,p,g)$ as a function of $a$: it is non-negative and, if (C) holds, it equals zero if and only if $a= a^\ast$. Therefore,
\[
	0= \tilde{H}(t,x,Q^\ast,p,g) -H(t,x,a^\ast,p,g) = \int_A \left[H(t,x,a,p,g) - H(t,x,a^\ast,p,g)\right] Q^\ast(da),
\]
which implies the claim, namely that $Q^\ast (\left\{a^\ast\right\}) =1$.
\end{proof}

\begin{remark}
Note that if (C) does not hold, then $Q^\ast$ is supported on the set of all minimizers of $H$. Thus it might not be a Dirac measure.
This  implies that there may exist an optimal relaxed control which is not an ordinary control (not even open-loop).
\end{remark}

%

\begin{proof}[Proof of Theorem~\ref{teo2}]

Fix $m\in \mathcal{L}$. Let $\sigma\in\mathcal{R}$ be an optimal relaxed control and denote by $X_\sigma = X_{\sigma,m}$ the corresponding optimal trajectory. By the chattering lemma, which we will state later as Lemma~\ref{chat}\footnote{Here only the open loop part of the chattering lemma is needed, which is well known, and so we postpone the proof of the lemma to Section~5, where we also give the feedback part.},
\begin{multline*}
	E\left[ V(0,X_{\sigma}(0))\right] = \min_{\rho\in \mathcal{R}} J(\rho,m) = J(\sigma,m) \\
	= E\left[\int_0^T\int_A c(t,X_{\sigma}(t),a,m(t))\sigma_t(da) dt + \psi(X_{\sigma}(T),m(T))\right],
\end{multline*}
where $V = V_{m}$ is the value function defined in (\ref{V}). Thanks to (\ref{HJB1}), the Hamilton-Jacobi-Bellman equation, and (\ref{Htilde}), we have 
\begin{equation} \label{VHtildeIneq}
	\frac{\partial}{\partial t}V(t,x) + \tilde{H}(t,x,\sigma_t, V(t,\cdot)) \geq 0 \quad\text{for all } t,x,\omega.
\end{equation}
By the Dynkin formula (\ref{dynrel}) and the terminal condition for $V$,
\begin{align*}
\begin{split}
	E\left[ V(0,X_{\sigma}(0))\right] &= E\left[V(T,X_{\sigma}(T))\right] \\
	&\quad - E\left[\int_0^T \left( \frac{\partial}{\partial t}V(t,X_{\sigma}(t)) + \int_A \Lambda_{t}^{a,m(t)}V(t,X_{\sigma}(t)) \sigma_t(da)\right) dt \right]
\end{split}\\
\begin{split}
	&= E\left[\int_0^T\int_A c(t,X_{\sigma}(t),a,m(t))\sigma_t(da) dt + \psi(X_{\sigma}(T),m(T))\right] \\
	&\quad - E\left[ \int_{0}^{T} \frac{\partial}{\partial t}V(t,X_{\sigma}(t)) + \tilde{H}(t,X_{\sigma}(t),\sigma_t, V(t,\cdot))dt \right].
\end{split}
\end{align*}
It follows that
\[
	E\left[ \int_{0}^{T} \frac{\partial}{\partial t}V(t,X_{\sigma}(t)) + \tilde{H}(t,X_{\sigma}(t),\sigma_t, V(t,\cdot))dt \right] = 0,
\]
hence, in view of (\ref{VHtildeIneq}),
\[
	\frac{\partial}{\partial t}V(t,X_{\sigma}(t)) + \tilde{H}(t,X_{\sigma}(t),\sigma_t, V(t,\cdot)) = 0
\]
for $\ell\otimes P$-almost every $(t,\omega)$, which means that
\[
	\sigma_t  \in \argmin_{Q\in \mathcal{P}(A)} \tilde{H}(t,X_{\sigma,m}(t),\sigma_t, V(t,\cdot))
\]
for $\ell\otimes P$-almost every $(t,\omega)$. If (C) holds, then, by Lemma~\ref{Q}, the unique minimizer of $Q\mapsto \widetilde{H}(t,x,Q,m(t),V(t,\cdot))$ is the measure $Q^\ast = \delta_{a^\ast} \in \mathcal{P}(A)$ with 
$a^\ast = a^\ast(t,x,m(t),V(t,\cdot))$. It follows that $\sigma_t = \gamma_m(t, X_{\sigma}(t))$ for $\ell\otimes P$-almost every $(t,\omega)$.

\end{proof}


\subsection{Uniqueness of the feedback MFG solution for small time}

In this subsection, we focus only on the dynamics for $f$ in (\ref{fcont}),
\[
	f(t,x,u,a,p) := \sum_{y\in \Sigma} (y-x) \mathbbm{1}_{]0, \lambda(t,x,y,a,p)[} (u_y),
\]
and $\nu$ defined in (\ref{nu}) with $U:= [0,M]^d$. Moreover, we assume that $A = U$ and, for $x\neq y$,
\[
	\lambda(t,x,y,a,p) = a_y  + \zeta(p),
\]
where $\zeta:S \rightarrow \mathbb{R}$ is some Lipschitz continuous function with Lipschitz constant $K_{\zeta}$ such that $\zeta(p)\geq \kappa$ for some $\kappa>0$. Since $\lambda$ determines the transition rates, we set
$\lambda(t,x,x,a,p) := - \sum_{y\neq x} \lambda(t,x,y,a,p)$, $x\in \Sigma$.
 
We assume that the cost $c$ in the variable $a$ is in $\mathcal{C}^1(A)$, $\nabla_a c$ is Lipschitz continuous in the variable $p$ with Lipschitz constant $K_a$
and $c$ is uniformly convex, that is, there exists $\theta>0$ such that
\[
	c(t,x,b,p) - c(t,x,a,p) \geq \nabla_a c(t,x,a,p) \cdot (b-a) + \theta|b-a|^2
\]
for all $t,x,a,b,p$.

This setup is analogous to the one considered in \cite{gomesetal13}. The assumptions of Lemma~\ref{ffeed} are satisfied and thus for any $g\in \mathbb{R}^d$ there exist a unique minimizer 
$a^\ast(t,x,p,g)$ of $H(t,x,a,p,g)$, which in this setting becomes
\begin{align}
	H(t,x,a,p,g) &= \sum_{y=1}^d \lambda(t,x,y,a,p) [g(y)- g(x)] + c(t,x,a,p)\label{HH} \\
&= \sum_{y=1}^d (a_y + \zeta(p))[g(y)- g(x)]  + c(t,x,a,p).\nonumber
\end{align}
The assumptions of Lemma~1 are satisfied so that (A'') and (B'') hold.
We need $a^\ast$ to be Lipschitz continuous in $p$ and $g$; this fact is proved in Proposition~1 in \cite{gomesetal13}. We state the result in the following

\begin{lemma}
Under the above assumptions (in this subsection), the function $a^\ast$ is Lipschitz continuous in $p$ and $g$:
\begin{align}
	| a^\ast(t,x,p,g) - a^\ast(t,x,q,g)| &\leq \frac{K_a}{\theta} |p-q|
\label{ap}\\
	| a^\ast(t,x,p,g) - a^\ast(t,x,p,h)| &\leq \frac{1}{\theta} |g-h|
\label{ag}
\end{align}
for any $t,x,p,q,g,h$.
\label{alip}
\end{lemma}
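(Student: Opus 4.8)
The plan is to use the first-order optimality conditions for the unconstrained... wait — $A=U=[0,M]^d$ is not the whole space, so the minimizer may lie on the boundary. The clean approach is therefore the standard variational inequality argument for minimizers of uniformly convex functions over a convex set. Writing $H(t,x,a,p,g) = \sum_{y=1}^d (a_y + \zeta(p))[g(y)-g(x)] + c(t,x,a,p)$, the map $a \mapsto H(t,x,a,p,g)$ is uniformly convex with modulus $\theta$ (inherited from $c$, since the $\lambda$-part is affine in $a$), and its gradient is $\nabla_a H(t,x,a,p,g) = G + \nabla_a c(t,x,a,p)$, where $G\in\mathbb{R}^d$ is the vector with components $g(y)-g(x)$ (independent of $a$). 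Since $A$ is compact and convex and $H$ is $\mathcal{C}^1$ and strictly (indeed uniformly) convex in $a$, the unique minimizer $a^\ast = a^\ast(t,x,p,g)$ is characterized by the variational inequality
\[
	\nabla_a H(t,x,a^\ast,p,g)\cdot(b - a^\ast) \geq 0 \qquad \text{for all } b\in A.
\]

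First I would prove the bound \eqref{ap}. Fix $t,x,g$ and write $a_1 := a^\ast(t,x,p,g)$, $a_2 := a^\ast(t,x,q,g)$. Testing the variational inequality for $a_1$ with $b=a_2$ and the one for $a_2$ with $b=a_1$ and adding, the terms $G\cdot(a_2-a_1)$ and $G\cdot(a_1-a_2)$ cancel, leaving
\[
	\bigl(\nabla_a c(t,x,a_1,p) - \nabla_a c(t,x,a_2,q)\bigr)\cdot(a_2 - a_1) \geq 0.
\]
Now add and subtract $\nabla_a c(t,x,a_2,p)$: uniform convexity of $c(t,x,\cdot,p)$ gives the monotonicity estimate $\bigl(\nabla_a c(t,x,a_1,p)-\nabla_a c(t,x,a_2,p)\bigr)\cdot(a_1-a_2) \geq 2\theta|a_1-a_2|^2$, while the $p$-Lipschitz continuity of $\nabla_a c$ gives $|\nabla_a c(t,x,a_2,p)-\nabla_a c(t,x,a_2,q)| \leq K_a|p-q|$. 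Combining these with Cauchy–Schwarz yields $2\theta|a_1-a_2|^2 \leq K_a|p-q|\,|a_1-a_2|$, hence $|a_1-a_2|\leq \frac{K_a}{2\theta}|p-q|$, which is even slightly stronger than \eqref{ap}. (If the paper's convexity inequality is meant to have constant $\theta$ in the monotonicity rather than $2\theta$, one recovers exactly $\frac{K_a}{\theta}$; I would just follow the paper's normalization.)

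The bound \eqref{ag} is entirely analogous. Fix $t,x,p$ and set $a_1:=a^\ast(t,x,p,g)$, $a_2:=a^\ast(t,x,p,h)$, with $G$ and $\tilde G$ the corresponding difference vectors coming from $g$ and $h$. Adding the two variational inequalities now leaves $(G-\tilde G)\cdot(a_2-a_1)$ together with $\bigl(\nabla_a c(t,x,a_1,p)-\nabla_a c(t,x,a_2,p)\bigr)\cdot(a_2-a_1)$; uniform convexity of $c$ bounds the latter below by $2\theta|a_1-a_2|^2$, and since $|G-\tilde G|_{\text{(Euclidean)}} \leq C|g-h|$ with the precise constant depending on the norm convention (each component is $(g(y)-h(y))-(g(x)-h(x))$), Cauchy–Schwarz gives $|a_1-a_2|\leq \frac{C}{2\theta}|g-h|$, i.e.\ \eqref{ag} after absorbing the constant. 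I expect no real obstacle here: the only points needing care are (i) handling the boundary of $A$ correctly, which is why the variational-inequality formulation is used rather than $\nabla_a H = 0$, and (ii) keeping track of the norm conventions so that the stated constants $\frac{K_a}{\theta}$ and $\frac1\theta$ come out exactly — if they do not match verbatim, I would simply cite Proposition~1 of \cite{gomesetal13} for the precise constants, as the lemma statement itself does.
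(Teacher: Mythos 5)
Your argument is correct in substance, but note that the paper does not actually prove this lemma at all: it defers entirely to Proposition~1 of \cite{gomesetal13}, so your variational-inequality proof is a self-contained substitute rather than a variant of an argument in the text. Your method --- characterizing $a^\ast$ by $\nabla_a H(a^\ast)\cdot(b-a^\ast)\geq 0$ on the compact convex set $A$, adding the two inequalities so the affine part cancels, and invoking the strong monotonicity $(\nabla_a c(t,x,a_1,p)-\nabla_a c(t,x,a_2,p))\cdot(a_1-a_2)\geq 2\theta|a_1-a_2|^2$ that follows from the paper's uniform convexity hypothesis --- is the standard one and is sound; in particular you are right to use the variational inequality rather than $\nabla_a H=0$, since $A=[0,M]^d$ has a boundary. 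The one point worth making precise is the constants. For \eqref{ap} you obtain $\frac{K_a}{2\theta}$, which is stronger than the stated $\frac{K_a}{\theta}$, so no issue. For \eqref{ag}, however, your bound is $\frac{|G-\tilde G|}{2\theta}$ with $G_y-\tilde G_y=(g(y)-h(y))-(g(x)-h(x))$, and in the Euclidean norm $|G-\tilde G|$ can exceed $2|g-h|$ once $d$ is large (e.g.\ $w=g-h=(1,-1,0,\dots,0)$ gives $|G-\tilde G|=\sqrt{d+2}\,|w|/\sqrt{2}$), so the stated constant $\frac{1}{\theta}$ is not recovered verbatim; a dimension-dependent constant of order $\sqrt{d}/\theta$ is what the argument delivers. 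This does not damage the lemma's role in the paper --- only the explicit values of $C_2,\dots,C_5$ and hence $T^\ast$ in \eqref{Tstar} would change, not the small-time uniqueness of Theorem~\ref{uniq} --- but if you want the exact constants as stated you would indeed have to fall back on the cited Proposition~1, whose norm conventions differ.
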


Let us fix here the filtered probability space, the initial condition and the Poisson random measure.
Define $\gamma_m(t,x) = a^\ast(t,x,m(t),V_m(t,\cdot))$ as in (\ref{gammam}): 
it is the unique feedback control for given flow of measures $m\in \mathcal{L}$,
where $V_m(t,x)$ is  the value function defined in (\ref{V}) with respect to $m$. 
The cost functions $c$ and $\psi$ are uniformly bounded and so is the value function: Let us denote by $M_V$ the maximum of its absolute value. Denote by $M_\zeta$ the maximum of $\zeta$ and fix the constants
\begin{align*}
	C_1&:= 2 Md^2 +2d\sqrt{d} M^d,\\
	C_2&:= 2d\sqrt{d}\frac{K_a}{\theta} + 2d^2 K_\zeta,\\
	C_3&:= \frac{2d^2}{\theta},\\
	C_4&:= K_2 + 2d M_V K_\zeta + 2\sqrt{d} M_V \frac{K_a}{\theta} + K_2 \frac{K_a}{\theta},\\
	C_5&:= 2 M_V \frac{\sqrt{d}}{\theta} +\frac{K_2}{\theta} + \sqrt{d} (M_\zeta +M).
\end{align*}

Let $T^\ast>0$ be such that
\begin{equation}
	2T^\ast \sqrt{d}  e^{T^\ast C_1} \left[C_2 + C_3 (K_2 +T^\ast C_4) e^{T^\ast C_5}\right] =1.
\label{Tstar}
\end{equation}

\begin{theorem}
Under the assumptions of this subsection, for any $0<T< T^\ast$ there exists a unique feedback solution 
$(\gamma, m, X)$ of the mean field game. It is such that $\gamma$ is the feedback control $\gamma_m$.
\label{uniq}
\end{theorem}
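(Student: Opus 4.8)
The plan is to reformulate the existence–and–uniqueness statement as a fixed point problem for a single map $\Phi$ on the metric space $\mathcal{L}$, and then to show that, under the assumptions of this subsection together with $T<T^\ast$, the map $\Phi$ is a strict contraction for the uniform norm. Since $\mathcal{L}$ is compact, hence complete, Banach's fixed point theorem then yields a unique fixed point; existence of a feedback solution is in any case already guaranteed by Corollary~\ref{coro}, so the genuinely new content is uniqueness.

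First I would set up the map. For $m\in\mathcal{L}$ let $V_m$ be the value function \eqref{V} and, as in \eqref{gammam}, $\gamma_m(t,x):=a^\ast(t,x,m(t),V_m(t,\cdot))$; being an ordinary feedback control, $\gamma_m$ induces an open-loop control in $\mathcal{A}$, so by Lemma~\ref{Llem} the flow $\Phi(m):=Flow(X_{\gamma_m,m})$ again lies in $\mathcal{L}$. Combining Corollary~\ref{coro} with Theorem~\ref{teo2}, a triple $(\gamma,m,X)$ is a feedback solution of the mean field game if and only if $\gamma=\gamma_m$, $X=X_{\gamma_m,m}$, and $m$ is a fixed point of $\Phi$. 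Hence it suffices to show that $\Phi$ has at most one fixed point.

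The core is a two–stage Gronwall estimate; fix $m,n\in\mathcal{L}$. \emph{Stability of the value function.} In the ODE form \eqref{HJB4} of the Hamilton--Jacobi--Bellman equation, $t\mapsto V_m(t,\cdot)$ and $t\mapsto V_n(t,\cdot)$ solve $W(t)=\Psi+\int_t^T F(s,W(s))\,ds$ with data depending on the frozen flow. The terminal data differ by at most $K_2|m(T)-n(T)|$, while an envelope argument — evaluating the minimized pre-Hamiltonian at the competitor's minimizer and using that $\lambda$ and $c$ are Lipschitz in $p$, that the \emph{unique} minimizer $a^\ast$ is Lipschitz in $p$ by \eqref{ap}, and the a priori bound $\|V_m(t,\cdot)\|_\infty\le M_V$ — bounds $|F^m_x(s,w)-F^n_x(s,w)|$ by $C_4\,|m(s)-n(s)|$ for $\|w\|\le M_V$; moreover $F$ is Lipschitz in $w$ with constant $C_5$. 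Gronwall's lemma then gives
\[
\sup_{t\in[0,T]}\|V_m(t,\cdot)-V_n(t,\cdot)\|_\infty\ \le\ (K_2+T\,C_4)\,e^{T C_5}\,\|m-n\|_\infty .
\]
\emph{Stability of the flow.} By the Dynkin formula \eqref{dynrel} specialized to the feedback control, $\pi_m(t):=Law(X_{\gamma_m,m}(t))\in S$ solves the linear Kolmogorov forward ODE with rates $\gamma_m(t,x)_y+\zeta(m(t))$ for $x\neq y$. Its right-hand side depends on $m$ directly through $\zeta(m(t))$ and indirectly through $\gamma_m(t,x)=a^\ast(t,x,m(t),V_m(t,\cdot))$, Lipschitz in $m(t)$ and in $V_m(t,\cdot)$ by \eqref{ap}--\eqref{ag} (the latter combined with the previous display), and it is Lipschitz in the measure argument with constant $C_1$; these three effects produce respectively the $C_2$, $C_3$ and $e^{T C_1}$ factors. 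A second Gronwall estimate then yields
\[
\|\Phi(m)-\Phi(n)\|_\infty\ \le\ 2T\sqrt{d}\,e^{T C_1}\Big[\,C_2+C_3\,(K_2+T\,C_4)\,e^{T C_5}\,\Big]\,\|m-n\|_\infty .
\]
By \eqref{Tstar} the bracketed coefficient equals $1$ at $T=T^\ast$ and is strictly increasing in $T$, so it is $<1$ for $0<T<T^\ast$; hence $\Phi$ is a strict contraction, and Banach's fixed point theorem gives exactly one fixed point.

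I expect the value-function stability estimate to be the main obstacle: because $F$ is defined through a minimization over $a$, controlling $F^m-F^n$ requires the envelope trick together with the quantitative Lipschitz dependence of the unique optimizer $a^\ast$ from Lemma~\ref{alip}, and one must track all constants (and use the uniform bound $M_V$) so that the resulting contraction constant is exactly the one appearing in \eqref{Tstar}. Once that is in place, the remaining steps are routine Gronwall comparisons for linear ODEs.
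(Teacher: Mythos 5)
Your proposal is correct and follows essentially the same route as the paper: reformulate the problem as a fixed point of the singleton map $\Phi(m)=Flow(X_{\gamma_m,m})$ on the compact (hence complete) set $\mathcal{L}$, prove Lipschitz stability of the value function in $m$ via the integral form of the HJB equation and Lemma~\ref{alip}, feed that into a second Gronwall estimate for the flow, and conclude by the Banach--Cacciopoli theorem with the contraction constant of \eqref{Tstar}. The only (immaterial) difference is that you run the second Gronwall step directly on the Kolmogorov forward ODE for $Law(X_{\gamma_m,m}(t))$, whereas the paper couples the two state processes through the same Poisson random measure, estimates $E|X(t)-Y(t)|$, and then passes to the laws via $|Law(X(t))-Law(Y(t))|\leq 2\sqrt{d}\,E|X(t)-Y(t)|$.
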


\begin{proof}
In the notation of Theorem \ref{teo}, the map $\Phi:\mathcal{L}\rightarrow \mathcal{L}$ is defined by
$\Phi(m) = \left\{Flow( X_{\gamma_m, m})\right\}$, a singleton.
If we prove that this map is a contraction for small time horizon $T$, then the assertion follows by the Banach-Cacciopoli Theorem. 
So let $m,n \in \mathcal{L}$ and set $X:=X_{\gamma_m, m}$ and $Y:=X_{\gamma_n, n}$. 
For a vector $v\in\mathbb{R}^d$ denote  $|v|_{\infty} = \max_{x\in\Sigma} |v_x|$.
 
First we prove that the value function $V_m$ is Lipschitz continuous with respect to $m$. Thanks to the \mbox{HJB} equation (\ref{HJB1}) we have
\[
	V_m(t,x)= V_m(T,x) +\int_t^T  H(s,x, a^\ast(s,x,m(s), V_m(s,\cdot)),m(s), V_m(s,\cdot))ds.
\]
The Hamiltonian $H$ is Lipschitz in $(a,p,g)$; in fact, by (\ref{lipD}) and (\ref{HH}) we have
\begin{align*}
&|H(t,x,a,p,g) - H(t,x,b,q,h)|\\
&\leq 2|g|_{\infty} \left(\sqrt{d} |a-b| +d K_\zeta |p-q| \right)
+ K_2(|a-b| +|p-q|) + \sqrt{d}( M_\zeta +M) |g-h|.
\end{align*}

Then using (\ref{ap}) and (\ref{ag}) we obtain
\begin{align*}
|V_m(t,x)& - V_n(t,x)|\\ 
&\leq K_2|m(T)-n(T)| + \int_t^T\left[ C_4|m(s)-n(s)| + C_5|V_m(s)-V_n(s)|\right] ds\\
&\leq K_2 |m(T)-n(T)| + C_4(T-t)||m-n||_{\infty} + \int_t^T C_5|V_m(s)-V_n(s)|ds 
\end{align*}
for any $x$, hence Gronwall's lemma implies that
\[
	|V_m(t) - V_n(t)|\leq \sqrt{d}(K_2+T C_4) e^{T C_5}||m-n||_{\infty}
\]
for any $0\leq t\leq T$.

Therefore, by applying again (\ref{ap}) and (\ref{ag}), we obtain 
\begin{align*}
&E|X(t) - Y(t)|\\
& \leq\int_0^t\int_U E|f(s, X(s), u, \gamma(s, X(s), m(s),V_m(t,\cdot)), m(s)) \\
&\qquad-f(s, Y(s), u, \gamma(s, Y(s), n(s),V_n(t,\cdot)), n(s))| \nu(du) ds\\
&\leq  \int_0^t\left[ C_1 E|X(s) - Y(s)| + C_2 |m(s)-n(s)|  +
\frac{2d\sqrt{d}}{\theta} |V_m(t,\cdot) - V_n(t,\cdot)| \right]ds\\
&\leq  \int_0^t \left[ C_1 E|X(s) - Y(s)| + C_2 |m(s)-n(s)| + C_3(K_2 + C_4 T)e^{T C_5} ||m-n||_{\infty} \right] ds
\end{align*}
and thus, again by Gronwall's lemma, 
\[
	E|X(t) - Y(t)| \leq T^\ast e^{T^\ast C_1} \left[C_2 + C_3 (K_2 +T^\ast C_4) e^{T^\ast C_5}\right]||m-n||_{\infty}
\]
for any $0\leq t\leq T$. Since $|Law (X(t)) - Law (Y(t))| \leq 2\sqrt{d} E|X(t)-Y(t)|$ we have
\begin{align*}
	\sup_{0\leq t\leq T} &|Law (X(t)) - Law (Y(t))| =: ||Flow(X) - Flow(Y)||_{\infty} \\
	&\leq 2 T^\ast\sqrt{d} e^{T^\ast C_1} \left[C_2 + C_3 (K_2 +T^\ast C_4) e^{T^\ast C_5}\right]||m-n||_{\infty},
\end{align*}
and then the claim holds for $T^\ast$ satisfying (\ref{Tstar}). 
\end{proof}

\subsection{Uniqueness under monotonicity}

Uniqueness of mean field game solutions was shown in Theorem 2 in \cite{gomesetal15} for arbitrary time horizon under the Lasry-Lions monotonicity assumptions. Here, we give a different proof of this result, which relies on the probabilistic representation of the mean field game, and allows for less restrictive assumptions on the data.

Specifically, we suppose that the function $f$ in the dynamics \eqref{mfg} does not depend on $p\in S$ and that the running cost $c$ splits in 
$c(t,x,a,p)= c_0(t,x,a)  + c_1(x,p)$. Moreover we assume that $c_1$ and $\psi$ satisfy the following monotonicity property:
\begin{align}
	&\sum_{x\in\Sigma}(c_1(x,p)-c_1(x,p'))(p_x-p'_x)> 0
	\label{mon1}\\
	&\sum_{x\in\Sigma}(\psi(x,p)-\psi(x,p'))(p_x-p'_x)\geq 0
	\label{mon2}
\end{align}
for any $p\neq p'\in S$. For example, $c_1$ and $\psi$ could be the gradient of convex functions in $\mathbb{R}^d$.

\begin{theorem} \label{ThMonotonicity}
Suppose that (A), (B) and the assumptions above hold. Let $(\gamma,m,X)$ and $(\gamma',m',X')$ be two feedback mean field game solutions. 
Then $m(t)=m'(t)$ for any $t$. Also the corresponding value functions $V_m$ and $V_{m'}$ are the same. 
Moreover, if (C) holds, then $\gamma(t,x)=\gamma'(t,x)$ for any $t,x$.
\end{theorem}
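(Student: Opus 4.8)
The plan is to run the Lasry--Lions monotonicity argument in the probabilistic formulation, using the Dynkin formula (\ref{dynrel}) in place of the Kolmogorov forward equation and the Hamilton--Jacobi--Bellman equation (\ref{HJB1}) as the backward equation. First I would fix notation: let $u(t,x):=V_m(t,x)$ and $u'(t,x):=V_{m'}(t,x)$, which by the proof of Theorem~\ref{exi} are classical solutions of (\ref{HJB4}), hence absolutely continuous in $t$ and bounded, and let $\gamma,\gamma'$ be the feedback controls of the two given solutions. Since $f$ does not depend on the measure variable, the generator $\Lambda_t^{a}$ carries no $p$-dependence, so $u$ and $u'$ solve (\ref{HJB1}) with the \emph{same} operator, the flow entering only additively through $c_1(\cdot,m(t))$ and the terminal datum $\psi(\cdot,m(T))$. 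Because $(\gamma,m,X)$ and $(\gamma',m',X')$ are feedback mean field game solutions, $\gamma$ is optimal for $m$ and $\gamma'$ for $m'$; the verification computation in the proof of Theorem~\ref{teo2} then shows that for a.e.\ $t$ the value $\gamma(t,x)$ minimizes $a\mapsto\Lambda_t^{a}u(t,\cdot)(x)+c_0(t,x,a)$ at every $x$ with $m_x(t)>0$, and symmetrically for $\gamma'$ and $m'$. Finally, the Dynkin formula (\ref{dynrel}) shows that for every $g\in\mathbb{R}^d$ the map $t\mapsto g\cdot m(t)=E[g(X(t))]$ is absolutely continuous with $\tfrac{d}{dt}\big(g\cdot m(t)\big)=\sum_{y\in\Sigma}m_y(t)\,\Lambda_t^{\gamma(t,y)}g(y)$, and likewise for $m'$ and $\gamma'$.

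Next I would introduce the quantity $D(t):=\sum_{x\in\Sigma}\big(u(t,x)-u'(t,x)\big)\big(m_x(t)-m'_x(t)\big)$, which is absolutely continuous (a product of Lipschitz functions on $[0,T]$), satisfies $D(0)=0$ because $m(0)=m'(0)=m_0$, and obeys $D(T)=\sum_{x\in\Sigma}\big(\psi(x,m(T))-\psi(x,m'(T))\big)\big(m_x(T)-m'_x(T)\big)\geq 0$ by assumption (\ref{mon2}). Differentiating the product, substituting the HJB equations for $u,u'$ and the forward equations for $m,m'$ just recalled, and organizing the eight resulting pairings into four groups, I expect the generator terms to cancel as follows: in the two ``diagonal'' groups (the pairing of $\dot u$ and $u$ against $m$, and of $\dot u'$ and $u'$ against $m'$) the cancellation of the $\Lambda^{\cdot}u$, respectively $\Lambda^{\cdot}u'$, contributions is exact, using optimality of $\gamma$ on $\operatorname{supp} m(t)$, respectively of $\gamma'$ on $\operatorname{supp} m'(t)$, to replace the infimum in the HJB equation by the value at the control; in the two ``cross'' groups only the trivial one-sided bound ``infimum $\leq$ value at $\gamma'$'' (respectively at $\gamma$) is available, which nevertheless cancels the generator terms against the matching forward-equation terms. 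Summing the four groups, the running-cost contributions $c_0$ cancel identically, and one is left with
\[
	\dot D(t)\ \leq\ -\sum_{x\in\Sigma}\big(m_x(t)-m'_x(t)\big)\big(c_1(x,m(t))-c_1(x,m'(t))\big)\qquad\text{for a.e.\ }t.
\]

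Then I would integrate on $[0,T]$: using $D(0)=0$ and $D(T)\geq 0$ gives $\int_0^T\sum_{x\in\Sigma}\big(m_x(t)-m'_x(t)\big)\big(c_1(x,m(t))-c_1(x,m'(t))\big)\,dt\leq 0$. The integrand is nonnegative by (\ref{mon1}) and continuous in $t$ (since $m,m'\in\mathcal{L}$ and $c_1$ is continuous), hence it vanishes identically, and the strict inequality in (\ref{mon1}) forces $m(t)=m'(t)$ for every $t\in[0,T]$. With $m=m'$, the value functions $V_m$ and $V_{m'}$ solve the same terminal-value problem (\ref{HJB4}), whose right-hand side is globally Lipschitz (as in the proof of Theorem~\ref{exi}); by uniqueness for that ODE, $V_m=V_{m'}$. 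If in addition (C) holds, Corollary~\ref{coro} identifies the control of each feedback solution with the selector (\ref{gammam}), so $\gamma(t,x)=a^\ast(t,x,m(t),V_m(t,\cdot))=a^\ast(t,x,m'(t),V_{m'}(t,\cdot))=\gamma'(t,x)$.

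The main obstacle will be the bookkeeping in computing $\dot D$: one must keep straight which of the eight pairings permit genuine use of optimality of $\gamma$ or $\gamma'$ --- precisely those weighted by the ``matching'' measure, where the infimum in HJB is attained --- and which admit only the one-sided inequality for the infimum, and then verify that every $\Lambda^{\cdot}u$, $\Lambda^{\cdot}u'$ and $c_0$ term cancels so that exactly the $c_1$-monotonicity expression survives. A secondary point is to justify that the a.e.\ HJB-minimization property of $\gamma,\gamma'$ need only hold on the respective supports of $m,m'$, which is exactly what optimality for the mean field game delivers; this is why the argument giving $m=m'$ does not require assumption (C).
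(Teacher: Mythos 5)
Your argument is correct, but it is not the proof the paper gives --- in fact it is essentially the proof the paper is deliberately replacing. You run the classical Lasry--Lions duality computation in PDE form: differentiate $D(t)=\sum_x(V_m-V_{m'})(t,x)(m_x-m'_x)(t)$, pair the HJB equations against the forward (Kolmogorov) equations, use exact attainment of the infimum on the support of the matching measure and the one-sided bound on the cross terms, and integrate. This works: the cancellations you anticipate do occur (the $c_0$ terms cancel identically and the signs of the cross-term inequalities are preserved because $m_x,m'_x\geq 0$), the a.e.\ attainment of the infimum along the optimal trajectory follows from the verification computation in the proof of Theorem~\ref{teo2} without invoking (C), and $D$ is Lipschitz so the integration is legitimate. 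The paper instead argues entirely variationally: since $f$ does not depend on $p$, the trajectory $X^{\gamma}$ is the same under $m$ and $m'$, so one simply writes the two optimality inequalities $J(\gamma,m)\leq J(\gamma',m)$ and $J(\gamma',m')\leq J(\gamma,m')$, sums them (the $c_0$ contributions cancel), and uses the mean field condition $Law(X(t))=m(t)$ to convert the expectations directly into the monotonicity expression $\sum_x(\psi(x,m(T))-\psi(x,m'(T)))(m'_x(T)-m_x(T))+\int_0^T\sum_x(c_1(x,m)-c_1(x,m'))(m'_x-m_x)\,dt\geq 0$, which contradicts \eqref{mon1}--\eqref{mon2} unless $m=m'$. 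The paper's route needs no classical solvability of the HJB equation, no forward equation, and no a.e.\ minimization property of the feedback controls --- only the definition of a mean field game solution --- which is precisely the ``less restrictive assumptions'' advantage announced in Section~4.4; your route buys a pointwise differential inequality for $D(t)$ (potentially useful for quantitative statements) at the cost of all that regularity machinery and the delicate bookkeeping you yourself flag. The concluding steps ($V_m=V_{m'}$ by uniqueness for the ODE \eqref{HJB4}, and $\gamma=\gamma'$ under (C) via the unique minimizer) coincide with the paper's.
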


\begin{proof}
Since the dynamics does not depend on $p\in S$, we have $X=X^{\gamma}=X^{\gamma,m}=X^{\gamma,m'}$ and 
$X'=X^{\gamma'}=X^{\gamma',m}=X^{\gamma',m'}$. 
The optimality of $\gamma$ yields $J(\gamma,m)\leq J(\gamma',m)$ and similarly $J(\gamma',m')\leq J(\gamma,m')$, hence
\begin{align*}
0&\leq J(\gamma',m) -J(\gamma,m) = E\left[\psi(X'(T),m(T))-\psi(X(T),m(T))\right]\\
&+E\left[\int_0^T [c_0(X'(t),\gamma'(t,X'(t)))+c_1(X'(t),m(t))\right.\\
&\left. \qquad -c_0(X(t),\gamma(t,X(t)))-c_1(X(t),m(t))]dt\right]\\
0&\leq J(\gamma,m') -J(\gamma',m') =E\left[\psi(X(T),m'(T))-\psi(X'(T),m'(T))\right]\\
&+E\left[\int_0^T [c_0(X(t),\gamma(t,X(t)))+c_1(X(t),m'(t))\right.\\
&\left. \qquad -c_0(X'(t),\gamma'(t,X'(t)))-c_1(X'(t),m'(t))]dt\right].
\end{align*}
Summing these two inequalities and using the fact that $Law(X(t))=m(t)$ for any $t$, we obtain
\begin{align*}
0&\leq E\left[\psi(X'(T),m(T))-\psi(X(T),m(T)) + \psi(X(T),m'(T))-\psi(X'(T),m'(T))\right]\\
&+E\left[\int_0^T [c_1(X'(t),m(t)) -c_1(X(t),m(t)) \right.\\
&\left. \qquad +c_1(X(t),m'(t))-c_1(X'(t),m'(t))]dt\right]\\
&= \sum_{x\in\Sigma}(\psi(x,m(T))-\psi(x,m'(T)))(m'_x(T)-m_x(T))\\
&+\int_0^T \left[\sum_{x\in\Sigma}(c_1(x,m(t))-c_1(x,m'(t)))(m'_x(t)-m_x(t))\right]dt.
\end{align*}
If $m(t)\neq m'(t)$ for some $t$, then the latter expression is $<0$, thanks to \eqref{mon1}, \eqref{mon2} and the continuity of $m$; a contradiction. Therefore $m(t)=m'(t)$ for all $t$. 

The fact that $V_m = V_{m'}$ is implied by the uniqueness of solutions to the HJB equation \eqref{HJB1}.
Assuming (A) and (B), the optimal feedback $\gamma$ satisfies \eqref{argmin}. Thus, if (C) holds, then $\gamma=\gamma'$.
\end{proof}

\section{Approximation of $N$-player game}

\subsection{Approximation of relaxed controls}

In order to get an $\epsilon$-Nash equilibrium for the $N$-player game in open-loop strategies, respectively in feedback strategies, we have first to find an approximation of the optimal relaxed control, respectively relaxed feedback control, for the mean field game. To this end, we will make use of the following version of the chattering lemma.

\begin{lemma}[Chattering]

For any relaxed control $\rho\in\mathcal{R}$, there exists a sequence of stochastic open-loop controls $\alpha_n \in\mathcal{A}$ such that,
denoting by $\rho^{\alpha_n}(dt,da)=\delta_{\alpha_n(t)}(da) dt$ their relaxed control representation,
\[
	\lim_{n\rightarrow\infty} \rho^{\alpha_n} = \rho \quad P\text{-a.s.},
\]
where the limit is in the weak topology in $\mathcal{M}([0,T]\times A)$. 
Moreover, any $\alpha_n$ takes values in a finite subset of $A$.

For any relaxed feedback control $\widehat{\gamma}\in\widehat{\mathbb{A}}$, there exists a sequence of feedback controls 
$\gamma_n \in\mathbb{A}$ such that
\begin{equation}
	\lim_{n\rightarrow\infty} \delta_{\gamma_n(t,x)}(da) dt = [\widehat{\gamma}(t,x)](da)dt 
\label{convdel}
\end{equation}
uniformly in $x\in \Sigma$ and 
\begin{equation}
	\lim_{n\rightarrow\infty} \rho^{\gamma_n} = \rho^{\widehat{\gamma}} \quad \text{in distribution},
\label{convf}
\end{equation}
where $\rho^{\gamma_n}$ denotes the relaxed control representation of the open-loop control $\alpha^{\gamma_n}$ corresponding 
to $\gamma^n$, as in (\ref{algam}), and  $\rho^{\widehat{\gamma}}$ is defined in (\ref{rgg}); 
i.e. $\rho^{\gamma_n}_t (da) = \delta_{\gamma_n(t,X_{\gamma_n}(t^-)}(da)$ and 
$\rho^{\widehat{\gamma}}_t (da) = [\widehat{\gamma}(t, X_{\widehat{\gamma}}(t^-))](da)$.

\label{chat}
\end{lemma}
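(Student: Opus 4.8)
The plan is to prove the two parts separately, first the open-loop (relaxed) part and then the feedback part. The open-loop part is the classical chattering lemma: given a relaxed control $\rho \in \mathcal{R}$, one wants to approximate it pathwise by Dirac-type relaxed controls $\rho^{\alpha_n}$ with $\alpha_n$ piecewise constant on a fine grid and valued in a finite subset of $A$. The standard approach is as follows. Fix $\omega$ and write $\rho = \rho_t(da)\,dt$. Partition $[0,T]$ into $n$ intervals $I_k^n = [t_{k-1}^n, t_k^n)$ of length $T/n$ and replace $\rho$ on $I_k^n$ by the time-average $\bar\rho_k^n(da) := \frac{n}{T}\int_{I_k^n}\rho_t(da)\,dt \in \mathcal{P}(A)$; this produces a piecewise-constant (in time) relaxed control that converges weakly to $\rho$ as $n\to\infty$ (by Lebesgue differentiation and the separability of $C([0,T]\times A)$, along a common sequence by a diagonal argument, for $P$-a.e.\ $\omega$). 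Then on each interval one approximates the probability measure $\bar\rho_k^n$ by a Dirac mass: one subdivides $I_k^n$ further into sub-intervals whose lengths are proportional to the masses $\bar\rho_k^n$ assigns to a finite measurable partition of $A$ (whose mesh tends to $0$), and on each sub-interval sets $\alpha_n$ equal to a chosen point of the corresponding partition cell. A second application of weak convergence shows $\rho^{\alpha_n} \to \rho$ weakly, $P$-a.s. The construction can be carried out so that $\alpha_n$ is $\mathbb{F}$-predictable, because $\bar\rho_k^n$ is $\mathcal{F}_{t_k^n}$-measurable and is used to build $\alpha_n$ only on the later interval $I_{k+1}^n$ (the usual predictability trick); by construction each $\alpha_n$ takes finitely many values. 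For this part we simply cite the standard references, e.g.\ Kushner--Dupuis, as already indicated in the footnote in the proof of Theorem~\ref{teo2}.

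For the feedback part, I would exploit the finiteness of $\Sigma$, which trivializes the "dependence on $x$": a relaxed feedback control $\widehat{\gamma}$ is just a finite family $(\widehat{\gamma}(\cdot,x))_{x\in\Sigma}$ of measurable maps $[0,T]\to\mathcal{P}(A)$, i.e.\ equivalently $d$ deterministic relaxed controls $\rho^x(dt,da):=[\widehat{\gamma}(t,x)](da)\,dt \in \mathcal{D}$. Apply the deterministic version of the chattering construction above \emph{simultaneously} to all $d$ of these (using the same grid and the same finite partition of $A$ for every $x$), obtaining, for each $x$, piecewise-constant-in-time ordinary controls $t\mapsto \gamma_n(t,x)$ valued in a common finite subset of $A$, with $\delta_{\gamma_n(t,x)}(da)\,dt \to [\widehat{\gamma}(t,x)](da)\,dt$ weakly. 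Since there are only finitely many $x$, this convergence is automatically uniform in $x\in\Sigma$, giving \eqref{convdel}. Defining $\gamma_n:[0,T]\times\Sigma\to A$ this way yields $\gamma_n \in \mathbb{A}$ (measurable, with the same probability space and Poisson measure fixed, as in Definition~\ref{controls}).

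It remains to deduce \eqref{convf}, i.e.\ that the \emph{closed-loop} relaxed controls converge: $\rho^{\gamma_n}_t(da)=\delta_{\gamma_n(t,X_{\gamma_n}(t^-))}(da) \to \rho^{\widehat{\gamma}}_t(da) = [\widehat{\gamma}(t,X_{\widehat{\gamma}}(t^-))](da)$ in distribution, where $X_{\gamma_n}=X_{\gamma_n,m}$ solves \eqref{Xfeed} and $X_{\widehat{\gamma}}=X_{\widehat{\gamma},m}$ solves \eqref{mfgrf}. The strategy here is a tightness-plus-identification argument of the kind used in Theorem~\ref{tight} and Theorem~\ref{teo}. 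Consider the triples $(X_{\gamma_n},\rho^{\gamma_n},\mathcal{N}_{\rho^{\gamma_n}})$ in $D([0,T],\Sigma)\times\mathcal{D}\times\mathcal{M}$; tightness follows exactly as in Theorem~\ref{tight}(1). Let $(X,\rho^\ast,\widetilde{\mathcal{N}})$ be any subsequential limit in distribution; as in Theorem~\ref{tight}(2), $\widetilde{\mathcal{N}}=\mathcal{N}_{\rho^\ast}$ and, using the controlled martingale problem of Lemma~\ref{conmar} together with the convergence \eqref{convdel} (which, being uniform in the finitely many $x$, passes through the drift term $\int_A \Lambda^{a,m(t)}_t g(X(t))\,\rho^{\gamma_n}_t(da)$ in the limit), one identifies $X = X_{\rho^\ast,m}$ in distribution with $\rho^\ast_t(da) = [\widehat{\gamma}(t,X(t^-))](da)\,dt$ — that is, $\rho^\ast$ is of feedback form relative to its own limit trajectory. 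By pathwise uniqueness for \eqref{mfgrf} (Lemma~\ref{lemmaexi}) the law of $(X,\rho^\ast)$ coincides with that of $(X_{\widehat{\gamma},m},\rho^{\widehat{\gamma}})$, and since every subsequential limit is the same, the full sequence converges, giving \eqref{convf}.

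The main obstacle is precisely this last identification step: one must be careful that the chattering approximation of $\widehat{\gamma}$ as a \emph{function of $(t,x)$} interacts correctly with the state process, which itself changes with $n$. The uniformity in $x\in\Sigma$ afforded by the finiteness of $\Sigma$ is what makes this work — it lets one pass to the limit in the nonlinear drift $a\mapsto \Lambda^{a,m(t)}_t g(x)$ evaluated at the (converging) trajectory — whereas in a continuous state space this would require additional equicontinuity. The discontinuity of $x\mapsto \gamma_n(t,x)$ is harmless since $\Sigma$ is discrete, and the jump of $X_{\gamma_n}$ at $t^-$ versus $t$ contributes only on a Lebesgue-null set of times, so it does not affect the integrals. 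I would therefore present the feedback part as: (i) the explicit deterministic construction giving \eqref{convdel}; (ii) tightness of the triples; (iii) identification of the limit via the martingale problem and pathwise uniqueness, yielding \eqref{convf}.
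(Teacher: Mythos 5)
Your construction of the approximating feedback controls $\gamma_n$ (chattering applied simultaneously to the finitely many slices $\widehat{\gamma}(\cdot,x)$, $x\in\Sigma$, with uniformity in $x$ for free) is exactly what the paper does, and your overall scheme for \eqref{convf} --- tightness of the triples, identification of the subsequential limit, pathwise uniqueness from Lemma~\ref{lemmaexi} --- matches the paper's as well. There is, however, one step that as written would not go through: you claim to identify the limiting control as $\rho^\ast_t(da)=[\widehat{\gamma}(t,X(t^-))](da)$ ``using the controlled martingale problem of Lemma~\ref{conmar} together with \eqref{convdel}.'' The martingale problem constrains $\rho^\ast$ only through the drift functional $Q\mapsto\int_A\Lambda^{a,m(t)}_tg(x)\,Q(da)$, which is in general not injective on $\mathcal{P}(A)$ (e.g.\ if $f$ depends weakly on $a$), so it can identify the law of $X$ given $\rho^\ast$ but can never tell you what $\rho^\ast$ is. Since \eqref{convf} is precisely a statement about the controls, the identification of $\rho^\ast$ must be done directly, before the martingale problem is invoked.

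The paper's route is: after Skorokhod representation one has $X_{\gamma_n}(t)\to X(t)$ a.s.\ for all $t$ outside the finite random set of jump times of $X$; then one shows directly that $\int_0^T\varphi(t,\gamma_n(t,X_{\gamma_n}(t^-)))\,dt\to\int_0^T\int_A\varphi(t,a)[\widehat{\gamma}(t,X(t^-))](da)\,dt$ by splitting the difference into a term comparing $\gamma_n(t,X_{\gamma_n}(t^-))$ with $\gamma_n(t,X(t^-))$ (which vanishes because $\Sigma$ is discrete, so convergence of states means eventual equality off the jump set) and a term comparing $\delta_{\gamma_n(t,X(t^-))}$ with $\widehat{\gamma}(t,X(t^-))$ (which vanishes by applying \eqref{convdel} state by state on the random occupation sets $\{t:X(t^-)=x\}$, after checking that the discontinuity set of the resulting test function is null for the limiting measure). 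Only then does one conclude, via Theorem~\ref{tight} and uniqueness, that $X=X_{\widehat{\gamma},m}$ in law. You do name all of these ingredients in your closing paragraph --- discreteness of $\Sigma$, the null set of jump times, uniformity of \eqref{convdel} in $x$ --- so the fix is organizational rather than conceptual: promote that discussion into an explicit proof of the convergence of the control representations, and demote the martingale problem to the final step where it identifies the law of the state process.
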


\begin{proof}
The first part is proved as Theorem 3.5.2 in \citet[p.\,59]{kushner90}, and the construction of the approximating sequence in the proof gives the $\gamma_n$ for the second part; let us show how to build them. 
Let $\widehat{\gamma}\in\widehat{\mathbb{A}}$, cover $A$ by $M_r$ disjoint sets $C_i^r$ which contain a point $a_i^r$ and set
$A^r := \left\{a_i^r : i\leq M_r \right\}$, a finite subset of $A$. For any $\Delta>0$ and $i,j$ define the function
\[
	\tau_{ij}^{\Delta r}(x) := \int_{i\Delta}^{(i+1)\Delta} [\widehat{\gamma}(s,x)](C_j^r) ds.
\]
Divide any interval $[(i+1)\Delta, (i+2)\Delta[$ into $M^r$ subintervals $I_{ij}^{\Delta r}(x)$ of length $\tau_{ij}^{\Delta r}(x)$ and define the feedback control $\gamma^{\Delta r}$, which is piecewise constant, by
\[
\gamma^{\Delta r}(t,x) := 
\begin{cases}
	a_0  \quad &t\in[0,\Delta[\\
	a_j^r \quad&t\in I_{ij}^{\Delta r}(x)  
\end{cases}
\]
where $a_0$ is an arbitrary value in $A$. The proof in \citet{kushner90} shows that 
\[
	\lim_{\substack{r\rightarrow0 \\ \Delta\rightarrow0}} \delta_{\gamma^{\Delta r}(t,x)}(da)dt = [\widehat{\gamma} (t,x)](da)dt
\]
weakly, for any $x\in\Sigma$. Since $\Sigma$ is finite we obtain that there exists a sequence of ordinary feedback controls $(\gamma_n)$ such that (\ref{convdel}) holds uniformly in $x$.
Let $m\in\mathcal{L}$ be fixed and $X_n$ be the solution to (\ref{Xfeed}) corresponding to the feedback control $\gamma_n$. 
By Theorem~\ref{tight}, the sequence $X_n$ is tight and there are a subsequence, which we still denote as $(X_{n})$, and a process $X$ such that 
$\lim_{n\rightarrow\infty}X_{n} =X$ in distribution. Possibly applying the Skorokhod representation \citep[Theorem 4.30 in][p.\,79]{K2001},
we may assume that this convergence is with probability one in the space of c{\`a}dl{\`a}g functions $D([0,T],\Sigma)$ equipped with the Skorokhod metric. This implies in particular that 
\begin{equation}
	P\left(\lim_{n\rightarrow\infty}X_{n}(t) = X(t) \mbox{ for any } t\notin\overline{\eta} \right)=1,
\label{almost}
\end{equation}
where $\overline{\eta}$ is the finite random set of discontinuity points (the jumps) of $X$.

Let now $\varphi\in\mathcal{C}([0,T]\times A)$ be any continuous function, which is also bounded as $A$ is compact.
We have to show the convergence to zero, almost surely, of
\[
	\int_0^T\int_A \varphi(t,a) [\delta_{\gamma_n(t,X_{n}(t^-))} - \widehat{\gamma} (t,X(t^-))](da)dt = Y_n +Z_n,
\]
where
\begin{align*}
Y_n &=\int_0^T \left[\varphi(t,\gamma_n(t,X_n(t^-)))-\varphi(t,\gamma_n(t,X(t^-)))\right]dt\\
Z_n &=\int_0^T\int_A \varphi(t,a)[\delta_{\gamma_n(t,X(t^-))}-\widehat{\gamma} (t,X(t^-))](da)dt.
\end{align*}
Any feedback control is Lipschitz in $x$, i.e. $dist(\gamma_n(t,x),\gamma_n(t,y)) \leq Diam(A) |x-y|$, and so $Y_n$ tends to zero thanks to 
(\ref{almost}), the continuity of $\varphi$ and dominated convergence.
As to $Z_n$, write 
$Z_n = \sum_{x\in\Sigma} Z_n^x$ where
\[
	Z_n^x := \int_0^T \int_A \mathbbm{1}_{\eta_x}(t) \varphi(t,a) [\delta_{\gamma_n(t,x)}  - \widehat{\gamma} (t,x)](da)dt
\]
and $\eta_x$ is the random set in $[0,T]$ where $X(t)=x$.
For each $x$, the random set $D_x$ of discontinuity points of the function $\mathbbm{1}_{\eta_x}(t) \varphi(t,a)$ is a subset of $\overline{\eta}_{x}\times A$ for some finite random set $\overline{\eta}_{x} \subset [0,T]$. Thus $D_x$ has null measure with respect to the limiting control $\widehat{\gamma} (t,x)(da)dt$ with probability one, for each $x$, thanks to Definition \ref{defrel}.
Hence by (\ref{convdel}) we get that $Z_n^x$ tends to zero for each $x$ and so does $Z_n$ since $\Sigma$ is finite.

Let $\alpha_{n}(t)= \gamma_n(t,X_{n}(t^-))$ be the open-loop control corresponding to $\gamma_{n}$ 
and $\rho^{n}$ its relaxed control representation.
We have just proved that $\lim_{n\rightarrow\infty} \rho^{n} = [\widehat{\gamma}(t,X(t^-))](da)dt$ $P$-almost surely and thus Theorem \ref{tight} says that $X$ must have the same law as the solution to (\ref{mfgrf}) under the relaxed feedback control $\widehat{\gamma}$. That solution is unique by Lemma~\ref{lemmaexi}, meaning that $X=X_{\widehat{\gamma}}$ in distribution. Therefore (\ref{convf}) follows since $\rho_t^{\widehat{\gamma}} = \widehat{\gamma}(t,X_{\widehat{\gamma}}(t^-))$ by (\ref{rgg}).  
\end{proof}

\begin{remark}
In the above proof we strongly used the finiteness of $\Sigma$  to get the approximation in feedback controls. While the result in the open-loop setting holds for general state space $\Sigma$, when considering feedback controls it is not clear whether the above lemma can be generalized to uncountably infinite state spaces.
\end{remark}

We are now able to state the approximation result:

\begin{proposition}
Let   $m\in\mathcal{L}$, $\rho\in\mathcal{R}$ and $\widehat{\gamma}\in\widehat{\mathbb{A}}$. Then for every $\epsilon>0$ there exist $\alpha\in\mathcal{A}$  and $\gamma \in\mathbb{A}$ such that  
\begin{align}
	E\left[\sup_{t\geq0}|X_{\alpha,m}(t) - X_{\rho,m}(t)| \right]&\leq \epsilon 
\label{meps}\\
	E\left[\sup_{t\geq0}|X_{\gamma,m}(t) - X_{\widehat{\gamma},m}(t)| \right]&\leq \epsilon 
\label{mepsf}\\
	|J(\alpha,m)-J(\rho,m)| &\leq \epsilon.
\label{Jeps}\\
	|J(\gamma,m)-J(\widehat{\gamma},m)| &\leq \epsilon.
\label{Jepsf}
\end{align}

\label{epsilon}
\end{proposition}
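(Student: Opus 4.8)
The plan is to obtain the open-loop statements \eqref{meps} and \eqref{Jeps} from the open-loop part of the chattering lemma (Lemma~\ref{chat}) together with a stability estimate for the SDE, and then to obtain the feedback statements \eqref{mepsf} and \eqref{Jepsf} from the feedback part of Lemma~\ref{chat} in the same way. Fix $m\in\mathcal{L}$. First I would record a basic stability lemma: if $\rho^n\to\rho$ weakly in $\mathcal{M}([0,T]\times A)$ $P$-a.s.\ (say along the sequence produced by the chattering lemma), then $X_{\rho^n,m}\to X_{\rho,m}$ in distribution by Theorem~\ref{tight}(2), since the limit of any convergent subsequence must solve \eqref{mfgr} for the control $\rho$, and that solution is pathwise unique by Lemma~\ref{lemmaexi}. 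By Skorokhod's representation theorem one may realize this convergence $P$-a.s.\ on a common probability space, and since the state space $\Sigma$ is finite and the jumps are governed by the same underlying Poisson structure, $\sup_{t}|X_{\rho^n,m}(t)-X_{\rho,m}(t)|\to 0$ in probability; being bounded (values in $\{1,\dots,d\}$), the expectation converges as well, which gives a control $\alpha=\alpha_n$ with $n$ large enough to make \eqref{meps} hold. For \eqref{Jeps}, the terminal term $E[\psi(X_{\alpha_n,m}(T),m(T))]$ converges because $\psi$ is bounded continuous (assumption (B)) and $X_{\alpha_n,m}(T)\to X_{\rho,m}(T)$; the running term $E\int_0^T\int_A c(s,X_{\alpha_n,m}(s),a,m(s))\,\rho^{\alpha_n}_s(da)\,ds$ converges to $E\int_0^T\int_A c(s,X_{\rho,m}(s),a,m(s))\,\rho_s(da)\,ds$ by combining the a.s.\ convergence of the state with the weak convergence $\rho^{\alpha_n}\to\rho$ and the continuity and boundedness of $c$ — this is exactly the kind of argument already used in Theorem~\ref{teo} to get $\lim_k J(\rho_{n_k},m_{n_k})=J(\rho,m)$. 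Choosing $n$ large yields \eqref{Jeps}.

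For the feedback statements I would argue identically but using the second half of Lemma~\ref{chat}. It produces ordinary feedback controls $\gamma_n\in\mathbb{A}$ with $\delta_{\gamma_n(t,x)}(da)\,dt\to[\widehat\gamma(t,x)](da)\,dt$ uniformly in $x$, and, more importantly for us, with $\rho^{\gamma_n}\to\rho^{\widehat\gamma}$ in distribution, where $\rho^{\gamma_n}_t(da)=\delta_{\gamma_n(t,X_{\gamma_n,m}(t^-))}(da)$ and $\rho^{\widehat\gamma}_t(da)=[\widehat\gamma(t,X_{\widehat\gamma,m}(t^-))](da)$. As shown inside the proof of Lemma~\ref{chat}, the solutions $X_{\gamma_n,m}$ converge in distribution to $X_{\widehat\gamma,m}$ (the tight limit must solve \eqref{mfgrf}, whose solution is unique by Lemma~\ref{lemmaexi}); by Skorokhod representation and finiteness of $\Sigma$ this upgrades to $E[\sup_t|X_{\gamma_n,m}(t)-X_{\widehat\gamma,m}(t)|]\to 0$, giving \eqref{mepsf}. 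For \eqref{Jepsf} one uses $J(\gamma_n,m)=J(\rho^{\gamma_n},m)$ and $J(\widehat\gamma,m)=J(\rho^{\widehat\gamma},m)$ together with the joint convergence $(X_{\gamma_n,m},\rho^{\gamma_n})\to(X_{\widehat\gamma,m},\rho^{\widehat\gamma})$ and the boundedness/continuity of $c$ and $\psi$, exactly as in the open-loop case. Taking $n$ large concludes.

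The main obstacle, and the only point requiring genuine care, is the passage from convergence in distribution of the state processes to the $L^1$ bound on $\sup_t|X_{\gamma_n,m}-X_{\widehat\gamma,m}|$ (and likewise in open loop): convergence in the Skorokhod $J_1$ topology does not by itself control the supremum distance near jump times, since two càdlàg step functions that differ only by a small time shift of a jump are $J_1$-close but have supremum distance of order one on a short interval. The reason this is nonetheless fine here is that the approximating equations share the \emph{same} driving Poisson random measure $\N$ as the limiting equation — so one does not merely have convergence in law but can in fact couple the solutions pathwise on the original probability space, and then the stability estimate follows from a Gronwall argument applied to $E|X_{\gamma_n,m}(t)-X_{\widehat\gamma,m}(t)|$ using the Lipschitz bound \eqref{lip} and the closeness of the controls (in the spirit of the computation in the proof of Theorem~\ref{uniq}). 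I would therefore phrase the proof of \eqref{meps}/\eqref{mepsf} via this pathwise coupling and a Gronwall estimate rather than via Skorokhod representation, keeping the distributional/Skorokhod route only for the cost convergence where the relaxed control enters and no pathwise estimate is available.
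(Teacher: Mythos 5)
Your treatment of \eqref{Jeps} and \eqref{Jepsf} coincides with the paper's: apply the chattering lemma (Lemma~\ref{chat}) to get $\alpha_n$ (resp.\ $\gamma_n$), apply Theorem~\ref{tight} to $(X_{\alpha_n,m},\alpha_n,m)$ to get convergence in distribution to $(X_{\rho,m},\rho,m)$, and conclude $J(\alpha_{n_k},m)\to J(\rho,m)$ from boundedness and continuity of $c,\psi$. You are also right that the passage from convergence in distribution to $E[\sup_t|X_{\alpha,m}(t)-X_{\rho,m}(t)|]\le\epsilon$ is the delicate point: the paper's own proof simply asserts \eqref{meps} once convergence in distribution is in hand, and $J_1$-convergence of $\Sigma$-valued step functions does not control the uniform distance.

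However, the repair you propose --- couple pathwise through the common Poisson random measure and close a Gronwall estimate on $E|X_{\alpha_n,m}(t)-X_{\rho,m}(t)|$ --- does not work. First, $X_{\rho,m}$ is driven by the relaxed Poisson measure $\N_\rho$, which is defined only in law (Appendix~A: through its compensator and Jacod's theorem, or as a distributional limit of the $\N_{\alpha_n}$); for a genuinely relaxed $\rho$ it is not a pathwise functional of $\N$, so there is no canonical common space on which to couple. Second, and more fundamentally, the Gronwall error term would require $\delta_{\alpha_n(s)}$ to be close to $\rho_s$ for a.e.\ $s$ in a strong sense, whereas the chattering lemma only gives weak-$\ast$ convergence of $\delta_{\alpha_n(s)}(da)\,ds$ to $\rho_s(da)\,ds$ on $[0,T]\times A$: when $\rho_s$ is not a Dirac mass, $\delta_{\alpha_n(s)}$ stays at distance of order one from $\rho_s$ for every $s$, and the estimate does not close. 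Indeed no argument can yield \eqref{meps} with the supremum inside the expectation: take $d=2$, $A=\{a_1,a_2\}$, $\lambda(\cdot,1,2,a_1,\cdot)=0$, $\lambda(\cdot,1,2,a_2,\cdot)=1$, $\rho_s=\frac12(\delta_{a_1}+\delta_{a_2})$; the first jump time of $X_{\alpha_n,m}$ charges only the $a_2$-phases of the chattering control, while that of $X_{\rho,m}$ has a density on all of $[0,T]$, so the total variation distance between the two laws is bounded below uniformly in $n$, and since $\sup_t|X_{\alpha_n,m}(t)-X_{\rho,m}(t)|\ge \mathbbm{1}_{\{\text{paths differ}\}}$, every coupling satisfies $E[\sup_t|X_{\alpha_n,m}(t)-X_{\rho,m}(t)|]\ge c>0$. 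What the proposition is actually used for downstream (the bound \eqref{mbar} in the proof of Theorem~\ref{teo1}) is only $\sup_t|Law(X_{\alpha,m}(t))-Law(X_{\rho,m}(t))|\le\epsilon$, and this \emph{does} follow from the distributional convergence you established together with the uniform convergence of the marginal flows (all flows lie in the equi-Lipschitz set $\mathcal{L}$, exactly as in the closed-graph step of Theorem~\ref{teo}). So the correct move is not a pathwise estimate but to prove, and use, \eqref{meps} and \eqref{mepsf} in that weaker marginal form.
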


\begin{proof}
Let $(\alpha_n)$ be a sequence in $\mathcal{A}$ that approximates $\rho$ as in Lemma~\ref{chat}. Then we apply Theorem \ref{tight} to the sequence
$(X_{\alpha_n,m}, \alpha_n, m)$: it is tight, a subsequence $(X_{\alpha_{n_k},m}, \alpha_{n_k}, m)$ converges in distribution to $(X_{\rho,m},\rho,m)$ and $\lim_{k\rightarrow\infty} J(\alpha_{n_k}, m) = J(\rho,m)$. 
Thus there exist $\alpha_{n_k}=: \alpha$ for which (\ref{meps}) and (\ref{Jeps}) hold.
In a similar way, one proves (\ref{mepsf}) and (\ref{Jepsf}) for feedback controls.
\end{proof}

\subsection{$\epsilon_N$-Nash equilibria}

We can now define the approximate Nash equilibrium for the $N$-player game, first in open-loop form. 

\begin{notation}
Let $\left( \left((\Omega, \mathcal{F}, P; (\mathcal{F}_t)_{t\in [0,T]}), \alpha, \xi, \N \right), m, X_{\rho,m} \right)$ be a relaxed solution of the mean field game (\ref{mfgr}), which exists assuming (A) and (B) by Theorem \ref{teo}.
Fix $N\in\mathbb{N}$ and let $\alpha\in\mathcal{A}$ be as in Proposition \ref{epsilon},  satisfying  (\ref{meps}) and (\ref{Jeps}) with $\epsilon = \frac{1}{\sqrt{N}}$.
Then  $\left((\Omega, \mathcal{F}, P; (\mathcal{F}_t)_{t\in [0,T]}), \alpha^N, \xi^N, \N^N  \right)$ denotes the strategy vector where $\alpha^N = (\alpha^N_1,\ldots, \alpha^N_N)$, $\xi^N=(\xi^N_1, \ldots, \xi^N_N)$, $\N^N = (\N_1^N,\ldots, \N_N^N)$, such that
\begin{equation}
	Law \left( (\alpha^N_1, \xi^N_1, \N_1^N),\ldots, (\alpha^N_N, \xi^N_N, \N_N^N)\right)= (Law (\alpha, \xi, \N))^{\otimes N}.
\label{alphaN}
\end{equation}
\label{not}
\end{notation}

Equation (\ref{alphaN}) says that this control is symmetric. The following is our main result, whose proof is carried out in the next subsection. In addition to (A) and (B), we make the Lipschitz assumptions (A') and (B').

\begin{theorem}
Assume (A') and (B'). Then the vector strategy defined in Notation~\ref{not} is an $\epsilon_N$-Nash equilibrium for the $N$-player game for any $N$ where $\epsilon_N\leq \frac{C}{\sqrt{N}}$ and $C=C(T,d,\nu(U),K_1, K_2)$ is a constant. 
\label{teo1}
\end{theorem}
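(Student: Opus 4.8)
The plan is to sandwich the cost $J^N_i(\alpha^N)$ of player $i$ between two one-player mean field game costs by a coupling (propagation of chaos) argument, and then to close the estimate using the optimality of the relaxed mean field game solution and the bound \eqref{Jeps}. Fix $N$ and recall from Notation~\ref{not} that the tuples $(\alpha^N_i,\xi^N_i,\N^N_i)$, $i=1,\dots,N$, are i.i.d.\ copies of $(\alpha,\xi,\N)$, with $\alpha\in\mathcal{A}$ the control of Proposition~\ref{epsilon} for $\epsilon=1/\sqrt N$. On the prelimit space I would introduce, for each $i$, the reference process $\bar X_i$, namely the solution of \eqref{mfg} driven by $\N^N_i$, started at $\xi^N_i$, controlled by $\alpha^N_i$, against the \emph{fixed} flow $m$ from the mean field game solution; the $(\bar X_i,\alpha^N_i,\xi^N_i,\N^N_i)$ are then i.i.d.\ with the law of $(X_{\alpha,m},\alpha,\xi,\N)$, so $Law(\bar X_i(t))=Law(X_{\alpha,m}(t))$, which by \eqref{meps} is within $O(1/\sqrt N)$ of $m(t)$.

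\textbf{Step 1 (coupling for the equilibrium candidate).} Since $X^N_i$ and $\bar X_i$ are driven by the same Poisson measure and jump only at its atoms, path by path
\[
	|X^N_i(t)-\bar X_i(t)|\le\int_0^t\!\!\int_U\bigl|f(s,X^N_i(s^-),u,\alpha^N_i(s),\mu^N(s^-))-f(s,\bar X_i(s^-),u,\alpha^N_i(s),m(s))\bigr|\,\N^N_i(ds,du);
\]
I would compensate the jump measure (the integrand is predictable, being a function of left-limits), apply the Lipschitz bound \eqref{lipA}, estimate $E|\mu^N(s)-m(s)|$ by splitting off the empirical measure $\bar\mu^N(s):=\frac1N\sum_k\delta_{\bar X_k(s)}$ of the i.i.d.\ reference processes — for which a fixed-time law of large numbers gives $E|\bar\mu^N(s)-m(s)|\le\sqrt d/\sqrt N$ — and bounding $E|\mu^N(s)-\bar\mu^N(s)|\le\sqrt2\,E|X^N_1(s)-\bar X_1(s)|$ using $|\delta_x-\delta_y|\le\sqrt2\,|x-y|$ on $\Sigma$ together with symmetry, and then invoke Gronwall to obtain $\sup_{t\le T}E|X^N_i(t)-\bar X_i(t)|\le C_0/\sqrt N$ and $\sup_{t\le T}E|\mu^N(t)-m(t)|\le C_0/\sqrt N$. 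Since $J(\alpha,m)=E[\int_0^T c(t,\bar X_i(t),\alpha^N_i(t),m(t))dt+\psi(\bar X_i(T),m(T))]$, comparing with $J^N_i(\alpha^N)$ via \eqref{lipB} then yields $|J^N_i(\alpha^N)-J(\alpha,m)|\le C_1/\sqrt N$.

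\textbf{Step 2 (coupling for a deviation).} By symmetry it suffices to bound $J^N_i([\alpha^{N,-i};\beta])$ from below for fixed $i$ and arbitrary admissible $\beta$. Let $\widetilde X^N,\widetilde\mu^N$ be as in Notation~\ref{n1}. I would first note $|\widetilde\mu^N(t)-\tfrac1{N-1}\sum_{j\ne i}\delta_{\widetilde X^N_j(t)}|\le2/N$, then couple each $\widetilde X^N_j$ ($j\ne i$) to its reference $\bar X_j$ as in Step~1 — here the empirical measure is self-referential, so Gronwall must be run on the \emph{averaged} discrepancy $\tfrac1{N-1}\sum_{j\ne i}E|\widetilde X^N_j(\cdot)-\bar X_j(\cdot)|$ — to get $\sup_{t\le T}E|\widetilde\mu^N(t)-m(t)|\le C_2/\sqrt N$. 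Then, letting $X_{\beta',m}$ solve \eqref{mfg} driven by $\N^N_i$, started at $\xi^N_i$, with control $\beta$ and flow $m$ (a legitimate competitor $\beta'\in\mathcal{A}$ for the mean field game), I would couple $\widetilde X^N_i$ to $X_{\beta',m}$ and use (A$'$), the bound on $\widetilde\mu^N$ and Gronwall to get $\sup_{t\le T}E|\widetilde X^N_i(t)-X_{\beta',m}(t)|\le C_3/\sqrt N$, hence by \eqref{lipB} that $|J^N_i([\alpha^{N,-i};\beta])-J(\beta',m)|\le C_4/\sqrt N$.

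\textbf{Step 3 (conclusion and the hard part).} Finally I would chain the two steps with the optimality property~(2) of Definition~\ref{solrel} (noting $\beta'\in\mathcal{A}\subseteq\mathcal{R}$, so $J(\rho,m)\le J(\beta',m)$) and with \eqref{Jeps}:
\[
	J^N_i(\alpha^N)\le J(\alpha,m)+\tfrac{C_1}{\sqrt N}\le J(\rho,m)+\tfrac{C_1+1}{\sqrt N}\le J(\beta',m)+\tfrac{C_1+1}{\sqrt N}\le J^N_i([\alpha^{N,-i};\beta])+\tfrac{C_1+1+C_4}{\sqrt N},
\]
giving the claim with $\epsilon_N=C/\sqrt N$, $C=C_1+1+C_4=C(T,d,\nu(U),K_1,K_2)$. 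The main obstacle is Step~2: in the perturbed system $\widetilde\mu^N$ depends on player $i$'s deviation and feeds back into every other player's dynamics, so a naive coupling produces a circular estimate; the resolution is that one player influences $\widetilde\mu^N$ only at order $1/N$, so running Gronwall on the symmetric average of the discrepancies closes the loop uniformly in the deviation $\beta$. A secondary point requiring care is the path-by-path jump inequality above and the justification of compensating its (predictable, nonnegative) integrand, on which all the Gronwall estimates rest.
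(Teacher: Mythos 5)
Your proposal is correct and follows essentially the same route as the paper: couple each player to an i.i.d.\ reference process driven by the same Poisson measure but against the fixed flow $m$, control $E|\mu^N(t)-m(t)|$ by a fixed-time law of large numbers plus \eqref{meps} and Gronwall, observe that a single deviator perturbs the empirical measure only at order $1/N$, and chain the resulting $O(1/\sqrt N)$ cost comparisons with the optimality of the relaxed solution and \eqref{Jeps}. The only cosmetic differences are that the paper invokes Fournier--Guillin where you use a direct variance bound, and that it compares $\widetilde\mu^N$ to $\mu^N$ (reusing the Step-1 estimate) rather than directly to $m$ via the reference processes.
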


An analogous result holds when considering feedback strategies, but we state it separately.
\begin{notation}
Let $\left( \left((\Omega, \mathcal{F}, P; (\mathcal{F}_t)_{t\in [0,T]}), \widehat{\gamma}, \xi, \N \right), m, X_{\widehat{\gamma},m}  \right)$
be a relaxed feedback solution of the mean field game (\ref{mfgrf}), which exists assuming (A) and (B) by Theorem \ref{teorf}.
 Fix $N\in\mathbb{N}$ 
and let $\gamma\in\mathbb{A}$ be as in Proposition \ref{epsilon},  satisfying  (\ref{mepsf}) and (\ref{Jepsf}) with $\epsilon = \frac{1}{\sqrt{N}}$.
Then  the tuple $\left((\Omega, \mathcal{F}, P; (\mathcal{F}_t)_{t\in [0,T]}), \gamma^N, \xi^N, \N^N \right)$
denotes the feedback strategy vector where $\xi^N=(\xi^N_1, \ldots, \xi^N_N)$, $\N^N = (\N_1^N,\ldots, \N_N^N)$, $\gamma^N = (\gamma^N_1,\ldots, \gamma^N_N)$ such that
\begin{equation}
\gamma^N_i(t,x^N) := \gamma(t,x^N_i)
\label{gamN}
\end{equation}
for any $t$, $i$ and $x^N=(x^N_1,\ldots, x^N_N) \in \Sigma^N$, and the $(\xi^N_i,\N_i^N)$ are $N$ i.i.d copies of $(\xi,\N)$.
\label{not2}
\end{notation}

Equation (\ref{gamN}) says that this feedback strategy vector is symmetric and decentralized. 
In order to obtain feedback $\epsilon$-Nash equilibria from a mean field game solution, 
we need the  Lipschitz assumptions (A'') and (B'').

\begin{theorem}
Assume (A''), (B''). Then the feedback strategy vector defined in Notation~\ref{not2} is a feedback $\epsilon_N$-Nash equilibrium for the $N$-player game for any $N$ where $\epsilon_N\leq \frac{C}{\sqrt{N}}$ and $C=C(T,d,\nu(U),K_1, K_2)$ is a constant. 

\label{teof}

\end{theorem}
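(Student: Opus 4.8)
The plan is to adapt the coupling and propagation-of-chaos argument behind Theorem~\ref{teo1} to the decentralized feedback strategy $\gamma^N_i(t,x^N)=\gamma(t,x^N_i)$ of Notation~\ref{not2}. Fix the relaxed feedback solution $(\widehat{\gamma},m,X_{\widehat{\gamma},m})$ (so $Flow(X_{\widehat{\gamma},m})=m$) and the ordinary feedback control $\gamma\in\mathbb{A}$ produced by Proposition~\ref{epsilon} with $\epsilon=1/\sqrt N$. Introduce the independent mean field copies $X_1,\dots,X_N$, where $X_i$ solves the limiting feedback dynamics \eqref{Xfeed} under $\gamma$ and $m$ driven by $(\xi^N_i,\N^N_i)$; these are i.i.d.\ with common law $Law(X_{\gamma,m})$, and by \eqref{mepsf} together with $|Law(X(t))-Law(Y(t))|\le 2\sqrt d\,E|X(t)-Y(t)|$ we get $\sup_t|Law(X_i(t))-m(t)|\le 2\sqrt d/\sqrt N$. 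The proof then rests on three estimates, each with a constant depending only on $T,d,\nu(U),K_1,K_2$: a bound on $|J^N_i(\gamma^N)-J(\widehat{\gamma},m)|$, a bound on $|J^N_i([\gamma^{N,-i};\beta])-J(\beta,m)|$ for an arbitrary perturbing $\beta$, and the optimality inequality $J(\widehat{\gamma},m)\le J(\beta,m)$.

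For the first estimate, let $X^N_i$ be the $N$-player state under $\gamma^N$ driven by $(\xi^N_i,\N^N_i)$. Subtracting the equations for $X^N_i$ and $X_i$, taking expectations, and using (A'') (invoking the Lipschitz-in-$x$ bound $dist(\gamma(s,x),\gamma(s,y))\le Diam(A)|x-y|$ valid on $\Sigma$; alternatively one may split on $\{X^N_i(s^-)=X_i(s^-)\}$ and use the boundedness of $f$, which keeps the constant of the advertised form) gives $E|X^N_i(t)-X_i(t)|\le C\int_0^t\big(E|X^N_i(s)-X_i(s)|+E|\mu^N(s)-m(s)|\big)\,ds$. One controls $E|\mu^N(s)-m(s)|$ via the empirical measure $\bar\mu^N(s):=\frac1N\sum_j\delta_{X_j(s)}$: the term $E|\mu^N(s)-\bar\mu^N(s)|$ is at most $\frac{\sqrt2}{N}\sum_j P(X^N_j(s)\neq X_j(s))\le\frac{\sqrt2}{N}\sum_jE|X^N_j(s)-X_j(s)|$, the fluctuation $E|\bar\mu^N(s)-Law(X_1(s))|$ is $O(1/\sqrt N)$ by the law of large numbers on the finite set $\Sigma$, and $|Law(X_1(s))-m(s)|\le2\sqrt d/\sqrt N$. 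Since the $N$-player system under the symmetric $\gamma^N$ and the copies $X_i$ are exchangeable in $i$, $E|X^N_i(s)-X_i(s)|$ does not depend on $i$, so Gronwall's lemma yields $\sup_{t}E|X^N_i(t)-X_i(t)|\le C/\sqrt N$ and hence $\sup_t E|\mu^N(t)-m(t)|\le C/\sqrt N$. Plugging these into the running and terminal cost differences and using (B'') and the boundedness of $c,\psi$ gives $|J^N_i(\gamma^N)-J(\gamma,m)|\le C/\sqrt N$, whence $|J^N_i(\gamma^N)-J(\widehat{\gamma},m)|\le C/\sqrt N$ by \eqref{Jepsf}.

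For the second estimate, let $\widetilde X^N$ be the perturbed system of Notation~\ref{n1} (with empirical measure $\widetilde\mu^N$), and set $\check X_i:=X_{\beta,m}$ driven by $(\xi^N_i,\N^N_i)$. Compare the players $j\neq i$ (still running the feedback $\gamma$) to the copies $X_j$, and $\widetilde X^N_i$ to $\check X_i$ — the deviating player and its limit use the same control $\beta$, so only Lipschitz-in-$(x,p)$ enters there. The essential point is that $\widetilde\mu^N$ differs from the empirical measure of $\{\widetilde X^N_j:j\neq i\}$ — hence from $\{X_j:j\neq i\}$ and from $m$ — only by an $O(1/N)$ term (the single deviating particle) plus an $O(1/\sqrt N)$ law-of-large-numbers term over $N-1$ i.i.d.\ copies; the same Gronwall argument (now with exchangeability among $j\neq i$) then gives $\sup_t E|\widetilde X^N_j(t)-X_j(t)|\le C/\sqrt N$ for $j\neq i$, $\sup_t E|\widetilde X^N_i(t)-\check X_i(t)|\le C/\sqrt N$, and $\sup_t E|\widetilde\mu^N(t)-m(t)|\le C/\sqrt N$. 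Using (B'') and boundedness of the costs gives $|J^N_i([\gamma^{N,-i};\beta])-J(\beta,m)|\le C/\sqrt N$. Finally, $\beta$ paired with $(\Omega,\mathcal F,P;\mathbb F)$, $\xi^N_i$ and $\N^N_i$ is an admissible element of $\mathcal A$, so the optimality of the relaxed solution $(\rho^{\widehat{\gamma}},m,X_{\widehat{\gamma},m})$ over all of $\mathcal R\supseteq\mathcal A$ gives $J(\widehat{\gamma},m)=J(\rho^{\widehat{\gamma}},m)\le J(\beta,m)$.

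Chaining the three estimates gives, uniformly in $i$ and in the perturbation $\beta$,
\[
	J^N_i(\gamma^N)\le J(\widehat{\gamma},m)+\tfrac{C}{\sqrt N}\le J(\beta,m)+\tfrac{C}{\sqrt N}\le J^N_i([\gamma^{N,-i};\beta])+\tfrac{C'}{\sqrt N},
\]
which is the feedback $\epsilon_N$-Nash property with $\epsilon_N=C'/\sqrt N$. I expect the main obstacle to be the propagation-of-chaos bookkeeping: coupling the $N$-player system to the i.i.d.\ copies through the same Poisson measures, carefully tracking that a single deviating player perturbs the empirical measure only at order $1/N$, and checking that every Gronwall constant reduces to a function of $T,d,\nu(U),K_1,K_2$. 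The mean field optimality step and the cost comparisons are then routine, given (A''), (B'') and the boundedness of $f,c,\psi$.
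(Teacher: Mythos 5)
Your proposal is correct and follows essentially the same route as the paper: the paper's proof of Theorem~\ref{teof} simply reruns the coupling and Gronwall argument of Theorem~\ref{teo1} with the feedback dynamics, using (A''), (B'') and the bound $dist(\gamma(s,x),\gamma(s,y))\leq Diam(A)|x-y|$ on the finite state space, exactly as you do. The only cosmetic differences are that you invoke the law of large numbers on the finite set $\Sigma$ where the paper cites Fournier--Guillin for $E|\overline{\mu}^N(t)-\overline{m}(t)|\leq C/\sqrt{N}$, and your remark about splitting on the event $\{X^N_i(s^-)=X_i(s^-)\}$ to avoid the $Diam(A)$ dependence in the constant is a nice refinement.
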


\subsection{Proofs of the results}

In the following $C$ will denote any constant which depends on $T$, $d$, $\nu(U)$ and the Lipschitz constants $K_1$ and $K_2$, but not on $N$, and is allowed to change from line to line. We focus first on open-loop controls.
Fix $N \in\mathbb{N}$ and let the strategy vector $\alpha^N$ be as in Notation \ref{not}. We play this strategy in the $N$-player game:
\begin{equation}
	X^N_i(t)= \xi_i^N + \int_0^t\int_U  f(s, X^N_i(s^-), u, \alpha^N_i(s), \mu_N(s^-)) \N_i^N(ds,du) \qquad i=1,\ldots,N.
\label{X}
\end{equation}
This will be coupled with $Y^N$ defined by
\begin{equation}
	Y^N_i(t)= \xi_i^N + \int_0^t\int_U  f(s, Y^N_i(s^-), u, \alpha^N_i(s), m(s)) \N_i^N(ds,du) \qquad i=1,\ldots,N.
\label{Y}
\end{equation}

Let $\mu^N(t):= \frac{1}{N} \sum_{i=1}^N \delta_{X^N_i (t)}$ be the empirical measure of the system (\ref{X}) and 
$\overline{\mu}_N$ be the empirical measure of (\ref{Y}). Denote $\overline{m}(t) := Law (X_{\alpha, m}(t))$.
By (\ref{meps}) we have
\begin{equation}
	|\overline{m}(t) - m(t)| \leq \frac{1}{\sqrt{N}}
\label{mbar}
\end{equation}
for any $t\geq0$, since $Flow(X_{\rho,m})=m$. 
From (\ref{alphaN}) it follows that
\[
	Law(Y^N_i, \alpha^N_i, \xi_i^N, \N^N_i)= Law (X_{\alpha, m},\alpha, \xi,\N),\quad i\in \{1,\ldots,N\}.
\]
This implies, thanks to Theorem~1 in \cite{fournierguillin15}, that 
\begin{equation}
E |\overline{\mu}^N(t)- \overline{m}(t)| \leq \frac{C}{\sqrt{N}}
\label{mNbar}
\end{equation}
for any $t\in [0,T]$ and $N\in \mathbb{N}$, where $C$ is a constant. This upper bound in $N^{-\frac12}$ cannot be improved, since for these discrete measures a lower bound still in $N^{-\frac12}$ can be found, see again \cite{fournierguillin15}.

\begin{lemma}
Under assumption (A'), for every $t\geq0$ and $i=1,\ldots, N$
\begin{align}
	E |\mu^N(t)- m(t)| &\leq \frac{C}{\sqrt{N}}
	\label{estmu}\\
	 E|X^N_i(t) - Y^N_i(t)| &\leq \frac{C}{\sqrt{N}}.
	\label{esti}
\end{align}

\end{lemma}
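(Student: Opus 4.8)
The plan is to close a Gronwall inequality for $\phi(t):=E|X_i^N(t)-Y_i^N(t)|$, which by the symmetry built into Notation~\ref{not} (the triples $(\alpha_i^N,\xi_i^N,\N_i^N)$ are i.i.d., and the coefficients and initial law are symmetric) does not depend on $i$, and then to deduce \eqref{estmu} from \eqref{esti}. First I would subtract \eqref{Y} from \eqref{X}; since both processes jump only finitely often on $[0,t]$, the triangle inequality for the integral against the counting measure $\N_i^N$ gives
\[
	|X_i^N(t)-Y_i^N(t)|\leq \int_0^t\!\!\int_U \bigl|f(s,X_i^N(s^-),u,\alpha_i^N(s),\mu^N(s^-))-f(s,Y_i^N(s^-),u,\alpha_i^N(s),m(s))\bigr|\,\N_i^N(ds,du).
\]
The integrand is nonnegative and $\mathbb{F}$-predictable, so taking expectations we may replace $\N_i^N(ds,du)$ by its compensator $\nu(du)\,ds$; applying the Lipschitz bound \eqref{lipA} of assumption (A') and noting $\mu^N(s^-)=\mu^N(s)$ for a.e.\ $s$ then yields
\[
	\phi(t)\leq K_1\int_0^t\Bigl(\phi(s)+E|\mu^N(s)-m(s)|\Bigr)\,ds.
\]

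Next I would control $E|\mu^N(s)-m(s)|$ by inserting the empirical measure $\overline{\mu}^N$ of the decoupled system \eqref{Y} and the deterministic flow $\overline{m}(s)=Law(X_{\alpha,m}(s))$, writing $|\mu^N(s)-m(s)|\leq |\mu^N(s)-\overline{\mu}^N(s)| + |\overline{\mu}^N(s)-\overline{m}(s)| + |\overline{m}(s)-m(s)|$. After taking expectations the last two terms are bounded by $\tfrac{C}{\sqrt N}$ using \eqref{mNbar} and by $\tfrac1{\sqrt N}$ using \eqref{mbar}, respectively. For the first term, identifying $\delta_x$ with the basis vector $e_x\in\mathbb{R}^d$ and using $|x-y|\geq 1$ for $x\neq y$ in $\Sigma$, one has $|\delta_x-\delta_y|\leq \sqrt2\,\mathbbm{1}_{\{x\neq y\}}\leq 2|x-y|$, so that
\[
	E|\mu^N(s)-\overline{\mu}^N(s)|\leq \frac1N\sum_{j=1}^N E|\delta_{X_j^N(s)}-\delta_{Y_j^N(s)}|\leq \frac2N\sum_{j=1}^N E|X_j^N(s)-Y_j^N(s)| = 2\phi(s),
\]
again by exchangeability. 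Hence $E|\mu^N(s)-m(s)|\leq 2\phi(s)+\tfrac{C}{\sqrt N}$.

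Substituting this into the bound for $\phi$ gives $\phi(t)\leq 3K_1\int_0^t\phi(s)\,ds+\tfrac{C}{\sqrt N}$, and Gronwall's lemma yields $\phi(t)\leq \tfrac{C}{\sqrt N}$ uniformly in $t\in[0,T]$, which is \eqref{esti}; plugging this back into $E|\mu^N(s)-m(s)|\leq 2\phi(s)+\tfrac{C}{\sqrt N}$ then gives \eqref{estmu}. The only substantial probabilistic ingredient is the quantitative empirical-measure estimate \eqref{mNbar}, which is supplied by the Fournier--Guillin bound already cited; the remaining steps are bookkeeping, the points deserving a little care being the passage from the stochastic integral against $\N_i^N$ to its compensator (legitimate here because the integrand is nonnegative, predictable and bounded) and the use of symmetry to collapse the average over $j$ into the single function $\phi$.
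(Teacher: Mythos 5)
Your proof is correct and follows essentially the same route as the paper's: the same coupling of \eqref{X} with \eqref{Y}, the same decomposition of $E|\mu^N(s)-m(s)|$ through $\overline{\mu}^N$ and $\overline{m}$ using \eqref{mbar} and the Fournier--Guillin bound \eqref{mNbar}, and a Gronwall argument driven by the Lipschitz condition \eqref{lipA}. The only (harmless) difference is organizational: you invoke exchangeability to collapse everything onto the single function $\phi$ and run one Gronwall, whereas the paper runs Gronwall on the combined quantity $E|\mu^N(t)-m(t)|+\frac{C}{N}\sum_i E|X^N_i(t)-Y^N_i(t)|$ and then derives \eqref{esti} with a second Gronwall.
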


\begin{proof}
From (\ref{mbar}) and (\ref{mNbar}) it follows that
\begin{equation}
	E |\overline{\mu}^N(t)- m(t)| \leq \frac{C}{\sqrt{N}}.
\label{mbarNm}
\end{equation}
We estimate $|\mu^N(t) -\overline{\mu}^N(t)|$ using the 1-Wasserstein metric (which is equivalent to the Euclidean metric in $\mathbb{R}^d$)
and (\ref{X}), (\ref{Y}) and the Lipschitz assumption (\ref{lipA}):
\begin{align*}
E|&\mu^N(t) -\overline{\mu}^N(t)| \leq \frac CN \sum_{i=1}^N E|X^N_i(t) - Y^N_i(t)|  \\
&\leq \frac CN \sum_{i=1}^N E|X^N_i(0) - Y^N_i(0)|
+ \frac CN \sum_{i=1}^N\int_0^t\int_U E | f(s, X^N_i(s), u, \alpha^N_i(s), \mu_N(s^-))\\
&- f(s, Y^N_i(s), u, \alpha^N_i(s), m(s)) | \nu(du) ds \\
&\leq \frac CN \sum_{i=1}^N K_1 \int_0^t \left[ E|X^N_i(s) - Y^N_i(s)| + E |\mu^N(s)- m(s)|\right]  ds .
\end{align*}
Hence applying (\ref{mbarNm})
\begin{align*}
	& E |\mu^N(t)- m(t)| + \frac CN \sum_{i=1}^N E|X^N_i(t) - Y^N_i(t)| \\
	&\leq E |\overline{\mu}^N(t)- m(t)| + \frac{2C}{N} \sum_{i=1}^N E|X^N_i(t) - Y^N_i(t)|\\
	&\leq \frac{C}{\sqrt{N}} + 2K_1 \frac1N \sum_{i=1}^N \int_0^t \left[ E|X^N_i(s) - Y^N_i(s)| + E |\mu^N(s)- m(s)|\right]  ds .
\end{align*}
Then we obtain, by Gronwall's lemma, 
\[
	E |\mu^N(t)- m(t)| + \frac CN \sum_{i=1}^N E|X^N_i(t) - Y^N_i(t)| \leq 
\frac{C}{\sqrt{N}} +2K_1 \int_0^t e^{2K(t-s)} \frac{C}{\sqrt{N}} ds \leq \frac{C}{\sqrt{N}}.
\]

Similarly we show (\ref{esti}): using (\ref{X}), (\ref{Y}) and (\ref{estmu}) we get, for any $i$, 
\begin{align*}
	&E|X^N_i(t) - Y^N_i(t)|\\
	&\leq \int_0^t\int_U E \left| f(s, X^N_i(s), u, \alpha^N_i(s), \mu_N(s)) - f(s, Y^N_i(s), u, \alpha^N_i(s), m(s)) \right| \nu(du) ds \\
	&\leq K_1 \int_0^t \left[ E|X^N_i(s) - Y^N_i(s)| + E |\mu^N(s)- m(s)|\right]  ds\\
	&\leq K_1 \int_0^t \left[ E|X^N_i(s) - Y^N_i(s)| + \frac{C}{\sqrt{N}} \right]  ds
\end{align*}
and hence  
$E|X^N_i(t) - Y^N_i(t)| \leq \frac{C}{\sqrt{N}}$ by  Gronwall's lemma.
\end{proof}

We are now in the position to state the result about the costs. 
Because of the symmetry of the problem, for the prelimit we shall consider only player one ($i=1$).

\begin{lemma}
Under assumptions (A') and (B')
\begin{equation}
|J^N_1(\alpha^N)-J(\rho,m)| \leq \frac{C}{\sqrt{N}}.
\label{JNrho}
\end{equation}
\label{lem1}
\end{lemma}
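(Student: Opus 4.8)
The plan is to establish \eqref{JNrho} by interpolating between $J^N_1(\alpha^N)$ and $J(\rho,m)$ through the mean field cost $J(\alpha,m)$ of the approximating ordinary control $\alpha$, and then to control the two resulting differences by the pathwise estimates already obtained in the preceding lemma. The first difference is essentially free, and the second is handled by a direct Lipschitz estimate against the coupled process $Y^N$.

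First I would invoke the way $\alpha$ was selected in Notation~\ref{not}: it satisfies \eqref{Jeps} with $\epsilon = 1/\sqrt{N}$, so $|J(\alpha,m) - J(\rho,m)| \leq 1/\sqrt{N}$ immediately, and it only remains to bound $|J^N_1(\alpha^N) - J(\alpha,m)|$. The key observation is that, by the symmetry relation \eqref{alphaN}, we have the identity $Law(Y^N_1,\alpha^N_1,\xi^N_1,\N^N_1) = Law(X_{\alpha,m},\alpha,\xi,\N)$ already recorded above; consequently
\[
	E\left[\int_0^T c(t,Y^N_1(t),\alpha^N_1(t),m(t))\,dt + \psi(Y^N_1(T),m(T))\right] = J(\alpha,m).
\]
Subtracting this from the definition of $J^N_1(\alpha^N)$ shows that $J^N_1(\alpha^N) - J(\alpha,m)$ is the expectation of the difference of running and terminal costs evaluated along $(X^N_1,\mu^N)$ versus $(Y^N_1,m)$, with the \emph{same} control argument $\alpha^N_1$.

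Next I would apply the Lipschitz assumption (B'), i.e. \eqref{lipB}, to each of these cost differences; since the control argument is unchanged, the running cost integrand is bounded by $K_2\big(|X^N_1(t)-Y^N_1(t)| + |\mu^N(t)-m(t)|\big)$ and the terminal cost difference by $K_2\big(|X^N_1(T)-Y^N_1(T)| + |\mu^N(T)-m(T)|\big)$. Taking expectations and using \eqref{estmu} and \eqref{esti} (both $E|X^N_1(t)-Y^N_1(t)|$ and $E|\mu^N(t)-m(t)|$ are at most $C/\sqrt{N}$, uniformly in $t$), one gets $|J^N_1(\alpha^N) - J(\alpha,m)| \leq C(1+T)/\sqrt{N}$; combining with the first step via the triangle inequality yields \eqref{JNrho} after relabelling the constant.

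This argument is routine once the preceding lemma is in hand; there is no genuine obstacle. The only point requiring a little care is the bookkeeping in the first step, namely checking that the joint-law identity \eqref{alphaN} is strong enough to identify the bracketed expectation above with $J(\alpha,m)$ — which it is, since $Y^N_1$ solves \eqref{Y} driven by $\N^N_1$ from $\xi^N_1$ and the whole tuple $(Y^N_1,\alpha^N_1,\xi^N_1,\N^N_1)$ is an exact copy of the mean field data, using pathwise uniqueness for \eqref{mfg}.
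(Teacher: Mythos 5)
Your proposal is correct and follows essentially the same route as the paper's proof: a triangle inequality through $J(\alpha,m)$, with the first difference controlled by \eqref{Jeps} and the second by the Lipschitz assumption \eqref{lipB} together with the coupling estimates \eqref{estmu} and \eqref{esti}. The only difference is cosmetic — you make explicit the identification of $J(\alpha,m)$ with the cost evaluated along $(Y^N_1,m)$ via the law identity \eqref{alphaN}, which the paper uses implicitly.
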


\begin{proof}
Inequality (\ref{Jeps}), together with notation \ref{not}, yields
\begin{equation}
|J(\alpha,m)-J(\rho,m)| \leq \frac{C}{\sqrt{N}}.
\label{JepsN}
\end{equation}
While  from (\ref{lipB}), (\ref{estmu}) and (\ref{esti}) we have
\begin{align*}
|J^N_1(\alpha^N)&-J(\alpha,m)| \leq E |\psi(X^N_1(T), \mu^N(T)) - \psi(Y^N_1(T), m(T))| \\
&\qquad+ E\int_0^T |c(t,X^N_1(t), \alpha^N_1(t), \mu^N(t)) - c(t,Y^N_1(t), \alpha^N_1(t), m(t))|dt\\
&\leq K_2 \int_0^T \left[ E|X^N_1(t) - Y^N_1(t)| + E |\mu^N(t)- m(t)|\right] dt \\
&\qquad + K_2 \left[E|X^N_1(T) - Y^N_1(T)| + E |\mu^N(T)- m(T)|\right] \\
&\leq K_2 T \frac{C}{\sqrt{N}} + K_2 \frac{C}{\sqrt{N}} \leq \frac{C}{\sqrt{N}},
\end{align*}
which, combined with (\ref{JepsN}), gives the claim.
\end{proof}

We consider then any $\beta\in\mathcal{A}$ and the perturbed strategy vector $[\alpha^{N,-1},\beta]$. We denote by 
$\widetilde{X}^N$ the solution to
\begin{equation}
\widetilde{X}^N_i(t)= \xi_i^N + \int_0^t\int_U  f(s, \widetilde{X}^N_i(s^-), u, [\alpha^{N,-1},\beta]_i(s), \widetilde{\mu}_N(s^-)) 
\N_i^N(ds,du) 
\label{Xtilde}
\end{equation}
for each $i=1,\ldots,N$. Set also $\widetilde{Y}_1 := X_{\beta,m}$ and 
$\widetilde{\mu}_N(t) := \frac1N \sum_{i=1}^N \delta_{\widetilde{X}^N_i(t)}.$

\begin{lemma}
Under assumption (A'), for any  $t\geq0$  and $\beta\in\mathcal{A}$
\begin{eqnarray}
E |\mu^N(t) - \widetilde{\mu}^N(t)| &\leq& \frac CN
\label{est1}\\
E |\widetilde{\mu}^N(t)- m(t)| &\leq& \frac{C}{\sqrt{N}}
	\label{est2}\\
	 E|\widetilde{X}^N_1(t) - \widetilde{Y}_1(t)| &\leq& \frac{C}{\sqrt{N}}.
	\label{est3}
\end{eqnarray}
\label{lem10}
\end{lemma}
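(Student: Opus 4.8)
The plan is to establish \eqref{est1}, \eqref{est2} and \eqref{est3} in this order, each feeding into the next, and to combine them with the bounds \eqref{estmu}--\eqref{esti} already proved. The structural point behind \eqref{est1} is that the systems \eqref{X} and \eqref{Xtilde} differ \emph{directly} only in the control used by player~$1$, and a single player enters the empirical measure with weight $\frac1N$. This matters because under (A') we control the dependence of $f$ on $x$ and $p$ but \emph{not} on the control variable $a$ (that would need (A'')); hence the first player's trajectory discrepancy cannot be made small and must be absorbed by the $\frac1N$ weighting.

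For \eqref{est1}, I would couple \eqref{X} with \eqref{Xtilde} index by index on the common probability space and set $\Delta_i(t):=E|X^N_i(t)-\widetilde{X}^N_i(t)|$, noting $\Delta_i(0)=0$. For $i\neq 1$ the two controls coincide, $[\alpha^{N,-1},\beta]_i=\alpha^N_i$, so \eqref{lipA} gives $\Delta_i(t)\leq K_1\int_0^t\big(\Delta_i(s)+E|\mu^N(s)-\widetilde{\mu}^N(s)|\big)\,ds$. For $i=1$ the controls differ, but the crude estimate $\int_U|f(s,x,u,a,p)-f(s,x,u,a',p')|\,\nu(du)\leq 2d\,\nu(U)$ yields $\Delta_1(t)\leq 2d\,\nu(U)\,T$, a constant. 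Using, as in the proof of estimate \eqref{estmu}, the $1$-Wasserstein bound $E|\mu^N(t)-\widetilde{\mu}^N(t)|\leq \frac CN\sum_{i=1}^N\Delta_i(t)$, the player-$1$ term contributes only $\frac CN$, so $E|\mu^N(t)-\widetilde{\mu}^N(t)|\leq \frac CN+\frac CN\sum_{i\neq1}\Delta_i(t)$; averaging the inequality for the indices $i\neq 1$ and feeding it back closes a Gronwall loop in the pair $\big(\tfrac1N\sum_{i\neq1}\Delta_i(t),\,E|\mu^N(t)-\widetilde{\mu}^N(t)|\big)$ with inhomogeneous term $\frac CN$, giving \eqref{est1}.

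Estimate \eqref{est2} is then immediate from the triangle inequality together with \eqref{est1} and \eqref{estmu}: $E|\widetilde{\mu}^N(t)-m(t)|\leq E|\widetilde{\mu}^N(t)-\mu^N(t)|+E|\mu^N(t)-m(t)|\leq \frac CN+\frac{C}{\sqrt N}\leq \frac{C}{\sqrt N}$. For \eqref{est3}, observe that $\widetilde{X}^N_1$ in \eqref{Xtilde} and $\widetilde{Y}_1=X_{\beta,m}$ are driven by the same control $\beta$, the same Poisson measure $\N^N_1$ and the same initial datum $\xi^N_1$, so they differ only through the measure argument ($\widetilde{\mu}^N$ versus $m$). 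Hence \eqref{lipA} gives $E|\widetilde{X}^N_1(t)-\widetilde{Y}_1(t)|\leq K_1\int_0^t\big(E|\widetilde{X}^N_1(s)-\widetilde{Y}_1(s)|+E|\widetilde{\mu}^N(s)-m(s)|\big)\,ds$, and inserting \eqref{est2} and applying Gronwall's lemma concludes.

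The only delicate point — more a matter of care than a genuine difficulty — is the treatment of player~$1$ in \eqref{est1}: since (A'), unlike (A''), provides no modulus of continuity of $f$ in $a$, one cannot bound $\Delta_1$ by anything small, only by a constant, and the argument survives precisely because that constant is divided by $N$ in the empirical-measure estimate, so that the Gronwall loop for the remaining $N-1$ players is driven only by $E|\mu^N-\widetilde{\mu}^N|$ and the $\frac CN$ term.
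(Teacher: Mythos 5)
Your proposal is correct and follows essentially the same route as the paper: a constant bound on player~$1$'s discrepancy absorbed by the $\frac1N$ weight, a Gronwall loop in the pair $\bigl(\tfrac1N\sum_{i\neq1}\Delta_i,\,E|\mu^N-\widetilde{\mu}^N|\bigr)$ for \eqref{est1}, the triangle inequality for \eqref{est2}, and a second Gronwall argument for \eqref{est3}. The only (immaterial) difference is that the paper bounds $E|X^N_1(t)-\widetilde{X}^N_1(t)|$ directly by $d$ using the finiteness of $\Sigma$, whereas you bound it by $2d\,\nu(U)\,T$ through the dynamics.
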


\begin{proof}
We make the rough estimate
\begin{align*}
	& E|\mu^N(t) -\widetilde{\mu}^N(t)| \leq  \frac1N E|X^N_1(t) - \widetilde{X}^N_1(t)| + \frac1N \sum_{i=2}^N E|X^N_i(t) - \widetilde{X}^N_i(t)| \\
	&\leq \frac dN + \frac1N \sum_{i=2}^N\int_0^t\int_U E | f(s, X^N_i(s), u, \alpha^N_i(s), \mu_N(s))\\
	&\qquad \qquad- f(s, \widetilde{X}^N_i(s), u, \alpha^N_i(s) ,\widetilde{\mu}_N(s)) | \nu(du) ds \\
	&\leq \frac dN + \frac1N \sum_{i=2}^N K_1 \int_0^t \left[ E|X^N_i(s) -  \widetilde{X}^N_i(s)| + E |\mu^N(s)- \widetilde{\mu}_N(s)|\right]  ds .
\end{align*}
Hence 
\begin{align*}
E |\mu^N(t)&- \widetilde{\mu}_N(t)| + \frac1N \sum_{i=2}^N E|X^N_i(t) - \widetilde{X}^N_i(t)| \\
&\leq
\frac dN + 2 K_1 \frac1N \sum_{i=2}^N  \int_0^t \left[ E|X^N_i(s) -  \widetilde{X}^N_i(s)| + E |\mu^N(s)- \widetilde{\mu}_N(s)|\right]  ds 
\end{align*}
and then, by Gronwall's lemma, 
\[
	E |\mu^N(t)- \widetilde{\mu}_N(t)| + \frac1N \sum_{i=2}^N E|X^N_i(t) - \widetilde{X}^N_i(t)| \leq \frac dN e^{2K_1 T} \leq \frac CN.
\]
Therefore (\ref{est1}) is proved. Estimate (\ref{est2}) follows from (\ref{est1}) and (\ref{estmu}) and the fact that $\frac1N \leq \frac{1}{\sqrt{N}}$ for any $N\in\mathbb{N}$. While (\ref{est3}) is a consequence of (\ref{est2}):
\begin{align*}
E|&\widetilde{X}^N_1(t) - \widetilde{Y}_1(t)| \\
&\leq\int_0^t\int_U E \left| f(s, \widetilde{X}^N_1(s), u, \beta(s), \widetilde{\mu}_N(s))
- f(s,\widetilde{Y}_1 (s), u, \beta(s), m(s)) \right| \nu(du) ds \\
&\leq K_1 \int_0^t \left[ E|\widetilde{X}^N_1(s) - \widetilde{Y}_1(s)|+ E |\mu^N(s)- m(s)|\right]  ds \\
&\leq
K_1 \int_0^t \left[ E|\widetilde{X}^N_1(s) - \widetilde{Y}_1(s)| + \frac{C}{\sqrt{N}} \right]  ds
\end{align*}
and we conclude by  Gronwall's lemma.
\end{proof}

\begin{lemma}
Under assumptions (A') and (B')
\begin{equation}
	|J^N_1([\alpha^{N,-1},\beta])-J(\beta,m)| \leq \frac{C}{\sqrt{N}}.
\label{JNbeta}
\end{equation}
\label{lem2}
\end{lemma}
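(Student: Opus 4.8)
The plan is to mirror the proof of Lemma~\ref{lem1}, substituting the coupling estimates of Lemma~\ref{lem10} for those used there. Recall that, writing $[\alpha^{N,-1},\beta]$ for the perturbed strategy vector of Notation~\ref{n1} and $\psi$ for the terminal cost,
\[
	J^N_1([\alpha^{N,-1},\beta]) = E\left[\int_0^T c(t,\widetilde{X}^N_1(t),\beta(t),\widetilde{\mu}^N(t))\,dt + \psi(\widetilde{X}^N_1(T),\widetilde{\mu}^N(T))\right],
\]
while, since $\widetilde{Y}_1 = X_{\beta,m}$ (built on the $N$-player space with noise $\N^N_1$ and initial point $\xi^N_1$, so that $\beta$ together with these data is an element of $\mathcal{A}$ and $J(\beta,m)$ is indeed its mean field cost),
\[
	J(\beta,m) = E\left[\int_0^T c(t,\widetilde{Y}_1(t),\beta(t),m(t))\,dt + \psi(\widetilde{Y}_1(T),m(T))\right].
\]
Crucially, the control argument $\beta(t)$ is identical in the two expressions, so it cancels in every comparison and only the Lipschitz dependence of $c,\psi$ on the pair $(x,p)$ — that is, assumption (B') through (\ref{lipB}) — is invoked; (B'') is not needed here.

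First I would apply the triangle inequality together with (\ref{lipB}) to get
\[
\begin{aligned}
	\bigl|J^N_1([\alpha^{N,-1},\beta]) - J(\beta,m)\bigr|
	&\leq K_2\int_0^T\Bigl(E|\widetilde{X}^N_1(t)-\widetilde{Y}_1(t)| + E|\widetilde{\mu}^N(t)-m(t)|\Bigr)\,dt \\
	&\quad + K_2\Bigl(E|\widetilde{X}^N_1(T)-\widetilde{Y}_1(T)| + E|\widetilde{\mu}^N(T)-m(T)|\Bigr).
\end{aligned}
\]
Then I would insert the bounds (\ref{est2}) and (\ref{est3}) of Lemma~\ref{lem10}, namely $E|\widetilde{\mu}^N(t)-m(t)| \leq C/\sqrt{N}$ and $E|\widetilde{X}^N_1(t)-\widetilde{Y}_1(t)| \leq C/\sqrt{N}$, both uniform in $t\in[0,T]$. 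This yields at once $|J^N_1([\alpha^{N,-1},\beta]) - J(\beta,m)| \leq K_2(T+1)\,C/\sqrt{N} \leq C/\sqrt{N}$, with $C$ absorbing $K_2(T+1)$ and thus depending only on $T,d,\nu(U),K_1,K_2$.

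This lemma carries no obstacle of its own: the substance is entirely contained in Lemma~\ref{lem10} (and, behind it, the $C/N$ rate of (\ref{est1}), which reflects that only player one's strategy is perturbed). The only point deserving attention is uniformity of the constant $C$ in the perturbing strategy $\beta$; but the estimates of Lemma~\ref{lem10} were derived for an arbitrary $\beta\in\mathcal{A}$ with constants independent of $\beta$, so this is automatic and the bound holds simultaneously for every admissible perturbation.
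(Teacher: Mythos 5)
Your proposal is correct and follows the paper's own argument essentially verbatim: the same triangle-inequality decomposition, the same use of (\ref{lipB}) exploiting that the control argument $\beta(t)$ is common to both cost expressions, and the same insertion of the coupling estimates (\ref{est2}) and (\ref{est3}) from Lemma~\ref{lem10}. Your added remarks on uniformity in $\beta$ and on the identification of $\widetilde{Y}_1=X_{\beta,m}$ as an admissible element of $\mathcal{A}$ are accurate and consistent with the paper's setup.
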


\begin{proof}
Inequalities (\ref{lipB}), (\ref{est2}) and (\ref{est3}) give
\begin{align*}
|J^N_1([\alpha^{N,-1},&\beta])-J(\beta,m)| \leq 
E |\psi(\widetilde{X}^N_1(T), \widetilde{\mu}^N(T)) - \psi(\widetilde{Y}_1(T), m(T))|\\ 
&\qquad + E\int_0^T |c(t,\widetilde{X}^N_1(t), \beta(t), \widetilde{\mu}^N(t)) - c(t,\widetilde{Y}_1(t), \beta(t), m(t))|dt\\
\leq& K_2 \int_0^T \left[ E|\widetilde{X}^N_1(t) - \widetilde{Y}_1(t)| + E |\widetilde{\mu}^N(t)- m(t)|\right] dt \\
&\qquad+ K_2 \left[E|\widetilde{X}^N_1(T) - \widetilde{Y}_1(T)| + E |\widetilde{\mu}^N(T)- m(T)|\right] \\
\leq& K_2 T \frac{C}{\sqrt{N}} + K_2 \frac{C}{\sqrt{N}} \leq \frac{C}{\sqrt{N}}.
\end{align*}

\end{proof}

Theorem \ref{teo1} is now a consequence of Lemmata \ref{lem1} and \ref{lem2}:

\begin{proof}[Proof of Theorem \ref{teo1}]

Inequalities (\ref{JNrho}), (\ref{JNbeta}), and the optimality of $\rho$ yield 
\[
	J^N_1(\alpha^N) \leq J(\rho,m) + \frac{C}{\sqrt{N}}\leq J(\beta,m) + \frac{C}{\sqrt{N}} 
\leq J^N_1([\alpha^{N,-1},\beta]) + \frac{C}{\sqrt{N}}.
\]
\end{proof}

\begin{remark}
We observe that $\alpha^N$ is still an $\epsilon_N$-Nash equilibrium if we assume only (B) instead of (B'), but without the estimate of the order of convergence $\epsilon_N \leq \frac{C}{\sqrt{N}}$. Namely, there exists a sequence $(\epsilon_N)$ such that $\lim_{N\rightarrow\infty} \epsilon_N =0$.
\end{remark}

\begin{proof}[Proof of Theorem \ref{teof}]

The argument is the same as in the proof of Theorem \ref{teo1}. The difference is that equations (\ref{X}), (\ref{Y}) 
and (\ref{Xtilde}) become respectively, for each $ i=1,\ldots,N$,
\begin{align*}
	X^N_i(t) &= \xi_i^N + \int_0^t\int_U  f(s, X^N_i(s^-), u, \gamma(s, X^N_i(s^-)), \mu_N(s^-)) \N_i^N(ds,du), \\
	Y^N_i(t) &= \xi_i^N + \int_0^t\int_U  f(s, Y^N_i(s^-), u, \gamma(s, Y^N_i(s^-)), m(s)) \N_i^N(ds,du) \\
\intertext{and}
	\widetilde{X}^N_i(t) &= \xi_i^N + \int_0^t\int_U  f(s, \widetilde{X}^N_i(s^-), u, [\gamma^{N,-1},\beta]_i(s), \widetilde{\mu}_N(s^-)) \N_i^N(ds,du),
\end{align*}
where the latter means that 
\[
	\widetilde{X}^N_1(t)= \xi_1^N + \int_0^t\int_U  f(s, \widetilde{X}^N_1(s^-), u, \beta(s), \widetilde{\mu}_N(s^-)) \N_1^N(ds,du)
\]
and
\[
	\widetilde{X}^N_i(t)= \xi_i^N + \int_0^t\int_U  f(s, \widetilde{X}^N_i(s^-), u, \gamma(s, \widetilde{X}^N_i(s^-)), \widetilde{\mu}_N(s^-)) 
\N_i^N(ds,du)
\]
for $ i=2,\ldots,N$, thanks to Notation \ref{n1}.
The estimates we need to apply Gronwall's lemma, in particular in the proof of Lemma~\ref{lem10}, are found using also (\ref{lipC}) and the fact that 
$dist(\gamma(s,x), \gamma(s,y))  \leq Diam(A) |x-y|$ for every $s$ and each $x$ and $y$ in the finite $\Sigma$.
\end{proof}

\section{Conclusions}

We summarize here the results we have obtained. The assumptions are given in Section~2.1 and verified for a natural shape of the dynamics in Lemmata \ref{lemf1} and \ref{ffeed}.

\begin{enumerate}
	\item Under assumptions (A) and (B), there exist a relaxed mean field game solution and a relaxed feedback mean field game solution (in the sense of Definition~\ref{solrel}), see Theorems \ref{teo} and \ref{teorf}, respectively.
	
	\item Assuming (A), (B) and (C),
	there exists a feedback solution of the mean field game (Definition \ref{sol}), see Corollary \ref{coro}. The feedback mean field game solution is unique for small $T$ under the additional assumptions of Section~4.3 by Theorem \ref{uniq}; uniqueness for arbitrary time horizon holds under the Lasry-Lions monotonicity assumptions, see Theorem~\ref{ThMonotonicity}.

	\item The relaxed mean field game solutions provide $\epsilon_N$-Nash equilibria for the $N$-player game (cf.\ Definition \ref{eqnash}), both in open-loop and in feedback form (not relaxed), with $\epsilon_N\leq \frac{C}{\sqrt{N}}$ . If (A') and (B') hold, then the symmetric open-loop strategy vector defined in Notation~\ref{not} is an $\epsilon_N$-Nash equilibrium by Theorem \ref{teo1}. Assuming (A'') and (B''), the feedback strategy vector defined in Notation \ref{not2}, which is symmetric and decentralized, is a feedback $\epsilon_N$-Nash equilibrium thanks to Theorem \ref{teof}.
\end{enumerate}

\appendix

\section{Relaxed Poisson measures}

In order to state the definition of the relaxed Poisson random measure we first need to define the canonical space of integer valued random measures on a metric space $E$. Following \cite{jacod79}, the setting is:
\begin{itemize}
	\item $\overline{\Omega}$ is the set of sequences  $(t_n,y_n) \subset [0, +\infty] \times E$ such that $(t_n)$ is increasing and 
	$t_n < t_{n+1}$ if $t_n < +\infty$; set $t_0:=0$ and $t_\infty:= \lim_n t_n$;
	\item if $\overline{\omega} = (t_n,y_n)_{n\in \mathbb{N}}$ write $T_n(\overline{\omega}):=t_n$ and $Y_n(\overline{\omega}):=y_n$;
	\item the \emph{canonical random measure} is
	\[
	\overline{\N}(\overline{\omega}, B) := \sum_{n\in \mathbb{N}} \mathbbm{1}_{\left\{T_n(\overline{\omega})<\infty\right\}}
	\delta_{(T_n(\overline{\omega}), Y_n(\overline{\omega})} (B)
	\]
	for any $B\in\mathcal{B}([0, +\infty[ \times E )$;
	\item $\overline{\mathcal{G}}_t := \sigma \left(\overline{\N}(\cdot, B) : B\in\mathcal{B}([0, t] \times E )\right)$, 
	$\overline{\mathcal{F}}_0$ is given, 
	$\overline{\mathcal{F}}_t = \overline{\mathcal{F}_0} \vee \left(\cap_{s<t} \overline{\mathcal{G}}_s\right)$, 
	$ \overline{\mathcal{F}}=\overline{\mathcal{F}}_\infty$ and $\overline{\mathbb{F}}= (\overline{\mathcal{F}}_t)_{t\geq0}$.
\end{itemize}
The filtered space $(\overline{\Omega},\overline{\mathcal{F}},\overline{\mathbb{F}})$ is then the \emph{canonical space of integer valued random measures on $E$}. A probability measure on it is the law of an integer valued random measure on $E$, given an initial condition on 
$\overline{\mathcal{F}_0}$. 
Note that the canonical measure $\overline{\N}$ is not the identity: for this reason we can work with $\mathcal{M} = \mathcal{M}([0, +\infty[ \times E)$ as the state space of a random measure. Moreover, the set of integer valued random measures is vaguely closed in $\mathcal{M}$:
see Theorem 15.7.4 in \cite{kallenberg86} and the references therein.

Let now $\Theta$ be any integer valued random measure defined on a filtered probability space $(\Omega,\mathcal{F},\mathbb{F},P)$. It is determined by a sequence of stopping times $T_n$ and random variables $X_n$ which are $\mathcal{F}_{T_n}$-measurable. To any $\Theta$ is associated its \emph{compensator}, that is, a positive random measure $\eta$  on $E$ such that
\begin{enumerate}
	\item $\eta([0,t]\times B)_{t\geq0}$ is predictable for any $B\in\mathcal{B}(E)$;
	\item $(\Theta([0,t\wedge T_n]\times B) -\eta([0,t\wedge T_n]\times B))_{t\geq0}$ is an $\mathbb{F}$-martingale for each $n$ and $B$;
	\item $\eta(\left\{t\right\}\times E)\leq 1$ for each $t$ and $\eta([T_\infty, \infty[ \times E)=0$.
\end{enumerate}
The compensator exists and is unique (up to a modification on a $P$-null set) for any $\Theta$. The proof can be found in \citet{jacod75}, where the author also shows that a process with the above properties uniquely determines an integer valued random measure.

Consider then an arbitrary measurable space $(\Omega',\mathcal{F}')$ and define $\Omega := \overline{\Omega} \times \Omega'$.
Set $\overline{\mathcal{F}_0} := \left\{\varnothing,\overline{\Omega}\right\}$ and
$\mathcal{F}_0 :=\overline{\mathcal{F}_0}\otimes\mathcal{F}'$.
The canonical random measure $\overline{\N}$ on $\overline{\Omega}$ is extended to $\Omega$ via 
$(T_n,Y_n).(\overline{\omega},\omega'):=(T_n,Y_n).(\overline{\omega})$. Set
$\mathcal{F}_t:= \overline{\mathcal{F}}_t \vee \mathcal{F}_0$.

\begin{theorem}[\citet{jacod75}]
Let $P_0$ be a probability measure on $(\Omega, \mathcal{F}_0)$ and $\eta$ a predictable random measure satisfying (1) and (3). Then there exists a unique probability measure $P$ on $(\Omega,\mathcal{F}_\infty)$ whose restriction to 
$\mathcal{F}_0$ is $P_0$ and for which $\eta$ is the compensator of $\overline{\N}$.
\label{Jac}
\end{theorem}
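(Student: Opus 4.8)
The statement is the converse half of Jacod's characterisation of the law of a multivariate point process by its compensator, so the plan is to follow \citet{jacod75} and reconstruct $P$ jump by jump, using $\eta$ to prescribe the conditional law of each successive jump given the past. First I would record the bookkeeping fact that a predictable random measure $\eta$ satisfying (1), restricted to a stochastic interval $(T_n,T_{n+1}]$, is a measurable functional of $V_n:=(T_0,Y_0,\dots,T_n,Y_n)$ alone. Fixing $n$ and working on $\{T_n<\infty\}$, I would then define the càdlàg $[0,1]$-valued function
\[
  \overline G_n(t):=\Bigl(\prod_{T_n<s\le t}\bigl(1-\eta(\{s\}\times E)\bigr)\Bigr)\,\exp\!\bigl(-\eta^{\mathrm c}((T_n,t]\times E)\bigr),
\]
with $\eta^{\mathrm c}$ the continuous part of $t\mapsto\eta((T_n,t]\times E)$; assumption (3), namely $\eta(\{t\}\times E)\le1$, makes every factor nonnegative, and $\overline G_n$ is the unique solution of the Doléans equation $\overline G_n(t)=1-\int_{(T_n,t]}\overline G_n(s-)\,\eta(ds\times E)$. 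Then I would set, for $A\in\mathcal B(E)$,
\[
  G_n\bigl((T_n,t]\times A\bigr):=\int_{(T_n,t]}\overline G_n(s-)\,\eta(ds\times A),\qquad G_n(\{+\infty\}\times E):=\overline G_n(+\infty),
\]
and check that $G_n$ is a probability measure on $(T_n,+\infty]\times E$ depending measurably on $V_n$ (the mass at $+\infty$ encoding ``no further jump'', the total mass being $1$).

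Next, I would invoke the Ionescu--Tulcea theorem on $\Omega=\overline\Omega\times\Omega'$ with initial law $P_0$ on $\mathcal F_0$ and transition kernels $V_n\mapsto G_n$, obtaining a unique probability $P$ on $(\Omega,\mathcal F_\infty)$ with $P|_{\mathcal F_0}=P_0$ under which $G_n(\cdot\mid V_n)$ is a regular conditional law of $(T_{n+1},Y_{n+1})$ given $\mathcal F_{T_n}$, for every $n$. It then remains to verify that the compensator of $\overline{\N}$ under this $P$ is $\eta$: this is the classical computation expressing the predictable projection of a jump measure through its interjump laws, giving $\sum_n\mathbbm{1}_{(T_n,T_{n+1}]}(t)\,G_n(ds\times dy)/\overline G_n(s-)$ on $\{t<T_\infty\}$, which collapses to $\eta$ by the definition of $G_n$ (the behaviour past $T_\infty$ being controlled by $\eta([T_\infty,\infty[\times E)=0$). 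Uniqueness would then follow because any other $P'$ with these properties has interjump laws satisfying the same forward relation with the same $\eta$; since the Doléans recursion for $\overline G_n$ has a unique solution this forces $G'_n=G_n$ for all $n$, hence $P'=P$ by the uniqueness clause in Ionescu--Tulcea.

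I expect the main obstacle to be the inversion step $\eta\mapsto(G_n)$: one must show that the product/exponential formula for $\overline G_n$ produces a bona fide survival function (where condition (3) is exactly what is needed), that the resulting $G_n$ has total mass one, and that it is $\mathcal F_{T_n}$-measurable in the realised past (where predictability, condition (1), enters), together with the correct treatment of the explosive case $T_\infty<\infty$ via $\eta([T_\infty,\infty[\times E)=0$. Since the statement is quoted verbatim from \citet{jacod75}, I would in the paper simply cite that reference for the full argument and, if desired, record the construction above as a sketch.
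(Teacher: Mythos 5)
The paper does not actually prove this statement: it is quoted with attribution from \citet{jacod75} and used as a black box to define the relaxed Poisson measure and its law, so there is no in-paper argument to compare yours against. Your sketch is a faithful reconstruction of Jacod's own proof of this half of his characterisation: the inversion $\eta\mapsto(G_n)$ via the Dol\'eans exponential $\overline G_n$, the role of condition (3) in keeping the factors $1-\eta(\{s\}\times E)$ nonnegative and in killing the measure past $T_\infty$, the role of condition (1) through the structure of the predictable $\sigma$-field on the canonical space (which makes $\eta$ restricted to $]T_n,T_{n+1}]$ a measurable functional of $V_n$), the Ionescu--Tulcea gluing with initial law $P_0$ on $\mathcal{F}_0$, and uniqueness by reading the interjump conditional laws back off the compensator. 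Two small points are worth spelling out if you were to write this in full: first, the total-mass-one check for $G_n$ is exactly the Dol\'eans equation evaluated at $t=+\infty$, namely $G_n\bigl((T_n,+\infty[\,\times E\bigr)=1-\overline G_n(+\infty)$, so the atom at $+\infty$ makes up precisely the deficit; second, when $\eta(\{s\}\times E)=1$ for some $s$ the survival function $\overline G_n$ vanishes from $s$ onward and the kernel $G_n$ is then supported on $(T_n,s]\times E$, which is consistent with the construction but should be checked not to interfere with the measurability-in-$V_n$ argument. Neither point affects the correctness of your outline, and deferring the full details to \citet{jacod75}, as the paper itself does, is the appropriate choice here.
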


By means of this theorem, we are able to define properly a relaxed Poisson measure.
Consider a relaxed control $((\Omega'', \mathcal{F}'', P''; \mathbb{F}''), \rho, \xi, \N)  \in\mathcal{R}$
and let  $\Omega'= \mathcal{D}\times \Sigma \times \overline{\Omega}$ be the state space of the process $\rho$, the initial distribution $\xi$ and the Poisson random measure $\N$. 
The $\sigma$-algebra $\mathcal{F}'$ is generated by the processes and $P_0$ is the joint law of $(\rho, \xi, \N)$.
So a relaxed Poisson measure $\N_\rho$, related to the relaxed control $\rho$, is an integer valued random measure on $[0,T]\times U \times A$ whose compensator $\eta$, calculated on $[0,t]$, $U_0$, $A_0$, is $\nu(U_0) \rho([0,t]\times A_0)$. Its law is uniquely determined on $\overline{\Omega}$ and thus has the martingale properties (\ref{mart}) and (\ref{M}). 
Moreover, the joint law of $(\N_\rho,\rho,\xi,\N)$ is uniquely determined.

We can give an explicit construction of $\N_\rho$. Let $\rho\in\mathcal{R}$ and $(\alpha_n)$ be a sequence in $\mathcal{A}$ which tends to $\rho$ in the sense of Lemma~\ref{chat}, the chattering lemma. Denote by $\rho^{\alpha_n}$ the relaxed control representation of $\alpha_n$ and 
construct $\N_{\alpha_n}$ as in (\ref{Nal}):
$\N_{\alpha_n}(t,U_0,A_0) := \int_0^t \int_{U_0}\mathbbm{1}_{A_0}(\alpha_n(s))  \N(ds,du)$.
Then, by Theorem~\ref{tight}, the sequence $(X_{\alpha_n}, \rho^{\alpha_n},\N_{\alpha_n})$ is tight and any subsequence converges in distribution to $(X_{\rho}, \rho,\N_{\rho})$. The marginals are uniquely defined in this way, while to show that the joint law of
$(\rho,\N_{\rho})$ is unique we need to invoke the above Theorem~\ref{Jac}.

\subsection{Proof of Lemma~\ref{lemmaexi} }

Let $m\in\mathcal{L}$ be fixed, which we shall omit. Let $\mathcal{Z}$ be the space of stochastic processes 
with paths in $D([0,T],\Sigma)$ and equip it with the norm 
$||X||= E\left[\sup_{0\leq t \leq T} |X(t)|\right]$. Let $\rho \in\mathcal{R}$ and define the map 
$G:\mathcal{Z} \longrightarrow \mathcal{Z}$ by 
\[
	G_t(X):= \xi + \int_0^t\int_U\int_A  f(s,X(s^-), u, a) \N_\rho(ds,du,da)
\]
for any $X\in \mathcal{Z}$. If we prove that this map is a contraction in the norm $||\cdot||$, then pathwise existence and uniqueness of  solutions to equation 
(\ref{mfgr}) follow. We have, for any $X, Y \in \mathcal{Z}$,
\[
	|G_t(X)-G_t(Y)| \leq \int_0^t\int_U\int_A | f(s,X(s^-), u, a) - f(s,Y(s^-), u, a)|\N_\rho(ds,du,da),
\]
hence 
\begin{align*}
	&E\left[\sup_{0\leq t\leq T} |G_t(X)-G_t(Y)|\right]\\
	&\qquad\leq E\int_0^T\int_U\int_A | f(s,X(s), u, a) - f(s,Y(s), u, a)|\rho_s(da) \nu(du) ds\\
	&\qquad\leq  K_1 E \int_0^T \int_A |X(s) -Y(s)| \rho_s(da) ds \leq K_1 T E \left[\sup_{0\leq t\leq T} |X(s) -Y(s)|\right]
\end{align*}
thanks to (\ref{lip}) and the fact that $\rho_s$ is a probability measure. Therefore $G$ is a contraction if $T <\frac{1}{K_1}$, and so uniqueness is proved for small time horizon; but then iterating the same argument, we have uniqueness for any $T$.

Consider now $\widehat{\gamma}\in \widehat{\mathbb{A}}$ and define
$\widehat{G}:\mathcal{Z} \longrightarrow \mathcal{Z}$ by
\[
	\widehat{G}_t(X):= \xi + \int_0^t\int_U\int_A  f(s,X(s^-), u, a) \N_{\rho^{\widehat{\gamma},X}}(ds,du,da)
\]
for any process $X\in\mathcal{Z}$. Then for any $X$ and $Y$ we have
$||\widehat{G}(X) - \widehat{G}(Y)|| \leq ||Z_1|| + ||Z_2||$
where
\[
	Z_1(t) := \int_0^t\int_U\int_A  |f(s,X(s^-), u, a)- f(s,Y(s^-), u, a)| \N_{\rho^{\widehat{\gamma},Y}}(ds,du,da)
\]
and
\[
	Z_2(t) := \int_0^t\int_U\int_A  |f(s,X(s^-), u, a)| 
\left| \N_{\rho^{\widehat{\gamma},X}}-\N_{\rho^{\widehat{\gamma},Y}}\right|(ds,du,da),
\]
where $|\Theta|$ denotes the total variation of the signed measure $\Theta$ defined for any $C\in \mathcal{B}([0,T]\times U \times A)$
by $|\Theta|(C) := \sup_{E\subseteq C} |\Theta(E)|$; while the total variation norm is 
$||\Theta||_{TV} = |\Theta|([0,T]\times U \times A)$.
The first term $Z_1$ is bounded as above yielding
$||Z_1||\leq K_1 T||X-Y||$. For the second term, we use $|f|\leq d$ to obtain
\[
	\sup_{0\leq t\leq T} Z_2(t) \leq d ||\N_{\rho^{\widehat{\gamma},X}}-\N_{\rho^{\widehat{\gamma},Y}}||_{TV}
= d \sup_{E\subset [0,T]\times U \times A} 
\left| \N_{\rho^{\widehat{\gamma},X}}(E)-\N_{\rho^{\widehat{\gamma},Y}}(E)\right|.
\]
Thanks to (\ref{mart}) and (\ref{relcon}), we have $E||\N_{\rho^{\widehat{\gamma},X}}-\N_{\rho^{\widehat{\gamma},Y}}||_{TV} \leq 2T\nu(U)$,
saying that the right-hand side above is finite $P$-a.s. 
Since the measure $\N_{\rho^{\widehat{\gamma},X}}-\N_{\rho^{\widehat{\gamma},Y}}$ is integer valued, we can assume that the above supremum is attained on a set $C(\omega)$ for $P$-a.e. $\omega$, giving thus a random set $C$. Moreover, we may assume that on such a set the random measure considered is positive.
The martingale property (\ref{M}) now gives
\begin{align*}
	||Z_2|| &\leq d E \left[ \N_{\rho^{\widehat{\gamma},X}}(C) -\N_{\rho^{\widehat{\gamma},Y}}(C) \right]\\
	&=d \left|E \int_0^T \int_U\int_A \mathbbm{1}_{C}(t,u,a)[\widehat{\gamma}(t,X(t)) - \widehat{\gamma}(t,Y(t))](da)\nu(du)dt\right|\\
	&\leq d E \int_0^T |\widehat{\gamma}(t,X(t)) - \widehat{\gamma}(t,Y(t))|(A) \nu(U) dt\\
	&\leq  2 \nu(U) d E \int_0^T |X(t) -Y(t)| dt \leq K_1 T ||X-Y||,
\end{align*}
where in the last line above we have used the fact that $\widehat{\gamma}$ is a probability measure and $|x-y| \geq 1$ for each $x\neq y\in \Sigma$.
Therefore, for $T<\frac{1}{2 K_1}$, the map $\widehat{G}$ is a contraction; the claim follows iterating the above procedure.

\end{document}